\documentclass[a4paper]{article}

\usepackage[a4paper,margin=2.5cm]{geometry}

\usepackage{amsmath,amssymb,amsfonts,amsthm}
\usepackage{mathrsfs}
\usepackage{graphicx}
\usepackage[colorlinks=true]{hyperref}

\usepackage{indentfirst}

\numberwithin{equation}{section}

\newtheorem{Thm}{Theorem}[section]
\newtheorem{Cor}{Corollary}[section]
\newtheorem{Lem}{Lemma}[section]
\newtheorem{Pro}{Proposition}[section]

\newtheorem{Rek}{Remark}[section]
\newtheorem{Def}{Definition}[section]

\newcommand{\N}{\mathbb{N}}

\newcommand{\R}{\mathbb{R}}

\title{Existence of positive and sign-changing solutions for a Choquard equation involving mixed local and nonlocal operators}

\author{Shaoxiong Chen, Hichem Hajaiej, Min Yang, Zhipeng Yang\thanks{Corresponding author: Z.~Yang.}}

\date{}  

\AtEndDocument{%
  \par
  \bigskip
  \bigskip

  \noindent
  \textbf{Shaoxiong Chen:}\\[0.2em]
  \textsc{Department of Mathematics, Yunnan Normal University, Kunming, China}\\[0.3em]
  \textit{E-mail address}: \texttt{gxmail@126.com}\\[1.5em]
    \noindent
    \textbf{Hichem Hajaiej:}\\[0.2em]
  \textsc{Department of Mathematics, California State University, Los Angeles, USA}\\[0.3em]
  \textit{E-mail address}: \texttt{hichem.hajaiej@gmail.com}\\[1.5em]
  \noindent
  \textbf{Min Yang:}\\[0.2em]
  \textsc{Department of Mathematics, Yunnan Normal University, Kunming, China}\\[0.3em]
  \textit{E-mail address}: \texttt{2563439805@qq.com}\\[1.5em]
  \noindent
  \textbf{Zhipeng Yang:}\\[0.2em]
  \textsc{Department of Mathematics, Yunnan Normal University, Kunming, China}\\
  \textsc{Yunnan Key Laboratory of Modern Analytical Mathematics and Applications, Kunming, China.}\\[0.3em]
  \textit{E-mail address}: \texttt{yangzhipeng326@163.com}%
}

\begin{document}

\maketitle

\begin{abstract}
We study the Choquard equation involving mixed local and nonlocal operators
\[
-\Delta u + (-\Delta)^{s}u + V(x)u
= \left(\frac{1}{|x|^{\mu}} * F(u)\right) f(u)
\quad \text{in } \R^{2},
\]
where $s\in(0,1)$, $\mu\in(0,2)$, $F(t)=\int_{0}^{t} f(\tau)\,d\tau$, and $f$ has subcritical exponential growth of Trudinger--Moser type. Under suitable assumptions on the potential $V$ and the nonlinearity $f$, we prove the existence of a least energy positive solution by a Nehari manifold approach. We also establish the existence of a sign-changing solution by means of invariant sets of descending flow. If, in addition, the nonlinearity is odd, then the problem admits infinitely many sign-changing solutions.
\par
\smallskip
\noindent {\bf Keywords}: Choquard equation; Mixed local-nonlocal operators; Trudinger--Moser inequality.

\smallskip
\noindent {\bf MSC2020}: 35R11, 35J61, 35J20.
\end{abstract}

\section{Introduction and main results}

In this paper, we study the existence of a least energy positive solution and sign-changing solutions for a mixed local--nonlocal Choquard equation with a continuous potential $V$ and a nonlinearity of Trudinger--Moser subcritical growth. More precisely, we consider
\begin{equation}\label{eq1.1}
-\Delta u + (-\Delta)^{s} u + V(x) u
= \left(\frac{1}{|x|^{\mu}} * F(u)\right) f(u)
\quad \text{in } \R^{2},
\end{equation}
where $s\in(0,1)$, $\mu\in(0,2)$, $V:\R^{2}\to\R$ is continuous, $f:\R\to\R$ is continuous, and
\[
F(t)=\int_{0}^{t} f(\tau)\,d\tau.
\]
Here $\Delta$ denotes the Laplacian and $(-\Delta)^{s}$ is the fractional Laplacian which, up to a positive normalization constant, is defined by
\[
(-\Delta)^{s}u(x)=\operatorname{P.V.}\int_{\R^{2}} \frac{u(x)-u(y)}{|x-y|^{2+2s}}\,dy,
\]
where $\operatorname{P.V.}$ stands for the Cauchy principal value.

The operator in \eqref{eq1.1} combines a second-order local diffusion and a nonlocal diffusion of order $s$. For convenience, we write
\[
\mathcal{L}=-\Delta+(-\Delta)^{s}.
\]
The operator $\mathcal{L}$ has attracted considerable attention because of its relevance in models exhibiting both local and nonlocal effects. It appears, for instance, in the study of bi-modal power law distributions \cite{Pagnini2021FCAA} and in applications to optimal search theory, biomathematics, and animal foraging behavior \cite{Dipierro2023AIHPCA}. Questions concerning existence, regularity, and symmetry of solutions, as well as Faber--Krahn type inequalities, Neumann problems, and Green function estimates, have been investigated, for example, in \cite{Abatangelo2021SJMA,Arora2025PRSE,Biagi2022CPDE,Biagi2023JAM,Biagi2024CCM}.

Without the fractional Laplacian, equation \eqref{eq1.1} reduces to a classical local semilinear problem, which has been extensively studied in view of its relevance in many areas. We refer to \cite{Hajaiej2012MMM} and \cite{Hajaiej2013N} for very general settings. Consider the problem
\begin{equation}\label{eq1.2}
\left\{\begin{aligned}
-\Delta u &= f(x,u) && x \in \Omega,\\
u &= 0 && x \in \partial\Omega.
\end{aligned}\right.
\end{equation}
Without relying on symmetry assumptions, Wang \cite{Wang1991AIHP} used the linking method and Morse theory to prove the existence of positive, negative, and sign-changing solutions. Castro et al.\ \cite{Castro1997RMJM} combined direct methods with variational splitting to extend these results and established the existence of positive, negative, and sign-changing solutions under suitable assumptions on autonomous nonlinearities. Bartsch and collaborators, see \cite{Bartsch2000MZ,Bartsch1996MNA,Bartsch2003TMNA}, further advanced the theory for problem \eqref{eq1.2}: they developed an abstract critical point theory for functionals on partially ordered Hilbert spaces, used Morse index arguments to prove the existence of sign-changing solutions, and studied nodal domains together with the location of subsolutions and supersolutions.

When only the fractional Laplacian is present, extensive research has focused on the existence, multiplicity, and regularity of solutions. For Dirichlet problems involving the fractional Laplacian,
\begin{equation}\label{eq1.3}
\left\{\begin{aligned}
(-\Delta)^s u &= f(x,u) && x \in \Omega,\\
u &= 0 && x \in \mathbb{R}^N \setminus \Omega,
\end{aligned}\right.
\end{equation}
Chang and Wang \cite{Chang2014JDE}, combining the method of invariant sets of descending flow with the Caffarelli--Silvestre extension technique \cite{Caffarelli2000CPDE} and the equivalent local realization proposed by Br\"andle et al.\ \cite{Brndle2013PRSC}, proved the existence of positive, negative, and sign-changing solutions, and showed that the sign-changing solutions have exactly two nodal domains. Deng and Shuai \cite{Deng2018ADE}, also using the method of invariant sets of descending flow, established the existence of positive, negative, and sign-changing solutions to \eqref{eq1.3} under suitable assumptions, and further proved that, under an additional monotonicity condition on the nonlinearity, the least energy of a sign-changing solution is strictly greater than the ground state energy. Li et al.\ \cite{Li2017FCAA} used minimax methods and invariant sets of descending flow to prove the existence of infinitely many sign-changing solutions for a fractional Brezis--Nirenberg problem. In a series of papers, the third author and collaborators studied the dynamics, orbital stability, and normalized solutions for very general nonlinearities $f$, see \cite{Hajaiej2023CVPDE,MR3670377,MR3114821,MR3177699}.

Recently, elliptic PDEs involving mixed local and nonlocal operators have also attracted considerable attention. In \cite{Biagi2022CPDE}, Biagi et al.\ established existence results, maximum principles, and interior Sobolev regularity for the problem
\begin{equation}\label{eq1.4}
\left\{\begin{aligned}
-\Delta u+(-\Delta)^s u &= f(x) && x \in \Omega,\\
u &\ge 0 && x \in \Omega,\\
u &= 0 && x \in \mathbb{R}^N\setminus \Omega.
\end{aligned}\right.
\end{equation}
Further summability properties of solutions to \eqref{eq1.4} were obtained by Lamao et al.\ \cite{LaMao2022AM}. In \cite{Giacomoni2025NNDEA}, the authors investigated normalized solutions for a Choquard-type equation involving mixed diffusion operators,
\[
\left\{\begin{array}{l}
\mathcal{L} u+u =\mu\left(I_\alpha *|u|^p\right)|u|^{p-2} u \quad \text{in } \mathbb{R}^n, \\
\|u\|_2^2=\tau,
\end{array}\right.
\]
where $\mathcal{L}=-\Delta+\lambda(-\Delta)^s$ with $s\in(0,1)$ and $\lambda>0$. They characterized the Sobolev regularity of normalized solutions and proved the equivalence between the existence of normalized solutions and that of normalized ground states. Anthal \cite{Anthal2023JMAA} studied a mixed-operator Choquard problem on bounded domains involving the Hardy--Littlewood--Sobolev critical exponent,
\[
\left\{\begin{array}{l}
\mathcal{L} u=\left(\int_{\Omega} \frac{|u(y)|^{2_\mu^*}}{|x-y|^\mu}\,dy\right)|u|^{2_\mu^*-2} u+\lambda u^p \quad \text{in } \Omega, \\
u \equiv 0 \quad \text{in } \mathbb{R}^n \backslash \Omega,\qquad u \geq 0 \quad \text{in } \Omega,
\end{array}\right.
\]
where $\Omega\subset \mathbb{R}^n$ has $C^{1,1}$ boundary, $n\ge 3$, $0<\mu<n$, $p \in\left[1,2^*-1\right)$, $2_\mu^*=\frac{2n-\mu}{n-2}$, and $2^*=\frac{2n}{n-2}$. Huang and Hajaiej \cite{HuangHajaiej2025LazerMcKennaMixed} studied existence, uniqueness, and regularity of weak solutions for mixed Dirichlet problems driven by $-\Delta+(-\Delta)^s$ with a product-type source $h(u)f$ on bounded domains. Their analysis shows that solvability and regularity are influenced by the competition among the nonlocal term $(-\Delta)^s$, the singular behavior of $h$ near zero and at infinity, and the summability or boundary singularity of the datum $f$. For further results revealing new features of this operator, we refer to \cite{Dipierro2022AA,Garain2022NA,Hu2020CVEE,su2026explicitformulacriticalmass,su2026groundstatesolutionslocalnonlocal} and the references therein.

On the other hand, over the past two decades, much attention has been devoted to the existence, multiplicity, and qualitative properties of solutions to Choquard-type equations. The classical Choquard equation
\begin{equation}\label{eq1.5}
-\Delta u+u=\left(\int_{\mathbb{R}^3} \frac{|u(y)|^2}{|x-y|} \,dy\right) u, \quad x \in \mathbb{R}^3
\end{equation}
is also known as the Choquard--Pekar equation and arises from Pekar's polaron model \cite{Pekar1954}. It was later used by Choquard in the context of Hartree--Fock theory \cite{Lieb1997CMP}. Lieb \cite{Lieb1976SIAM} proved the existence and uniqueness of the ground state of \eqref{eq1.5} by symmetric rearrangement arguments, and Lions \cite{Lions1980NA} studied the existence of positive radial solutions and infinitely many radial solutions for related Hartree-type equations.

Recently, Chen, Yang, and Yang \cite{ChenYangYang2026MixedChoquard} analyzed a two-dimensional Choquard equation driven by a mixed diffusion operator. They introduced
\[
\mathcal L_\varepsilon u=-\varepsilon^{2}\Delta u+\varepsilon^{2s}(-\Delta)^s u,
\qquad s\in(0,1),
\]
and considered the exponentially critical mixed Choquard problem
\begin{equation}\label{eq1.6}
\mathcal L_\varepsilon u+V(x)u
=\varepsilon^{\mu-2}\bigl(I_\mu*F(u)\bigr)f(u)
\quad\text{in }\mathbb R^{2},
\qquad
I_\mu(x)=|x|^{-\mu},\ 0<\mu<2,
\end{equation}
where \(F(t)=\int_{0}^{t} f(\tau)\,d\tau\). Under a Rabinowitz-type condition on \(V\) and Trudinger--Moser type critical growth assumptions on \(f\), they proved that there exists \(\varepsilon_{0}>0\) such that for every \(\varepsilon\in(0,\varepsilon_{0})\), problem \eqref{eq1.6} admits a positive ground state solution \(u_\varepsilon\). Moreover, if \(z_\varepsilon\) is a global maximum point of \(u_\varepsilon\), then concentration occurs near the set of global minima of the potential.

These results indicate that mixed local--nonlocal diffusions in two dimensions give rise to rich concentration and ground-state phenomena under Trudinger--Moser type nonlinearities. In contrast with \cite{ChenYangYang2026MixedChoquard}, our aim here is not to study semiclassical concentration, but rather to investigate positive and sign-changing solutions for the fixed-scale equation \eqref{eq1.1}. More precisely, we prove the existence of a least energy positive solution. We also establish the existence of a sign-changing solution and, under an additional oddness assumption on the nonlinearity, infinitely many sign-changing solutions.

We impose the following assumptions on $V$ and $f$:
\begin{itemize}
\item[$(V_1)$] $V\in C(\mathbb{R}^2,\mathbb{R})$ and there exists $V_0>0$ such that
\[
V(x)\ge V_0 \quad \text{for all } x\in\mathbb{R}^2;
\]

\item[$(V_2)$] for all $M>0$, there holds
\[
\mathrm{meas}\big(\{x\in\mathbb{R}^2: V(x)\le M\}\big)<\infty;
\]

\item[$(f_1)$] $f\in C^1(\mathbb{R},\mathbb{R})$ has Trudinger--Moser subcritical growth in the sense that, for every $\alpha>0$,
\[
\lim_{|t|\to\infty}\frac{f(t)}{\exp(\alpha |t|^2)-1}=0;
\]

\item[$(f_2)$]
\[
\lim_{t \to 0} \frac{f(t)}{t}=0;
\]

\item[$(f_3)$] the map $t \mapsto \frac{f(t)}{|t|}$ is strictly increasing on $\mathbb{R}\setminus\{0\}$;

\item[$(f_4)$] there exists $\theta>1$ such that
\[
f(t)\,t \ge \theta F(t) \ge 0
\quad \text{for all } t\in\mathbb{R};
\]

\item[$(f_5)$] $f(t)=0$ for all $t \le 0$;

\item[$(f_6)$] $f(t)t>0$ for all $t\neq0$;

\item[$(f_7)$] $f$ is odd, namely
\[
f(-t)=-f(t)
\quad \text{for all } t\in\mathbb{R}.
\]
\end{itemize}

\begin{Thm}\label{Thm1.1}
Assume that $V$ satisfies $(V_1)$--$(V_2)$ and $f$ satisfies $(f_1)$--$(f_5)$.
Then \eqref{eq1.1} admits a least energy positive solution.
\end{Thm}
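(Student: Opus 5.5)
We work in the Hilbert space $E=\{u\in H^1(\R^2): \int_{\R^2}V(x)u^2\,dx<\infty\}$ with norm
\[
\|u\|^2=\int_{\R^2}|\nabla u|^2\,dx+[u]_s^2+\int_{\R^2}V(x)u^2\,dx,
\]
where $[u]_s$ is the Gagliardo seminorm. Since $[u]_s^2\le C\|u\|_{H^1}^2$, this norm is equivalent to the $H^1_V$ norm. By $(V_1)$--$(V_2)$, the embedding $E\hookrightarrow L^p(\R^2)$ is compact for every $p\in[2,\infty)$; this follows from the standard Bartsch--Wang argument, splitting $\R^2$ into $\{V\le M\}\cap B_R$ and its complement. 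The energy functional is
\[
I(u)=\tfrac12\|u\|^2-\tfrac12\int_{\R^2}\big(|x|^{-\mu}*F(u)\big)F(u)\,dx .
\]
It is well defined and $C^1$. To see this, apply the Hardy--Littlewood--Sobolev inequality with exponent $4/(4-\mu)$. By $(f_1)$--$(f_2)$, for every $\varepsilon>0$, $\alpha>0$ and $q>2$,
\[
|F(t)|\le \varepsilon|t|^2+C|t|^q\big(e^{\alpha t^2}-1\big),
\]
and the Trudinger--Moser inequality in $H^1(\R^2)$ (Cao's version) bounds $\|F(u)\|_{4/(4-\mu)}$ on bounded sets of $E$ once $\alpha\|u\|^2$ is small enough. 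Subcriticality, i.e.\ the fact that $\alpha$ is arbitrary, makes this hold on every bounded set. By $(f_5)$, testing a critical point with $u^-$ gives $\|u^-\|^2\le \langle I'(u),u^-\rangle=0$, so critical points are nonnegative. Positivity then follows from the strong maximum principle for $\mathcal L$, using Biagi et al.\ \cite{Biagi2022CPDE}-type results; alternatively, a direct argument works: at a zero minimum point, $-\Delta u\le 0$ and $(-\Delta)^s u<0$ unless $u\equiv0$.

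Define the Nehari set $\mathcal N=\{u\in E\setminus\{0\}:\langle I'(u),u\rangle=0\}$ and $c=\inf_{\mathcal N}I$. Fix $u$ with $u^+\neq0$ and set
\[
g(t)=I(tu),\qquad h(t)=\frac{1}{t^2}\int_{\R^2}\big(|x|^{-\mu}*F(tu)\big)f(tu)\,tu\,dx .
\]
By $(f_3)$, $F(ts)/t$ and $f(ts)s/t$ are increasing in $t$ for $s>0$, so $h$ is strictly increasing. By $(f_2)$, $h(t)\to0$ as $t\to0$. By $(f_4)$ with $\theta>1$, $F$ grows superlinearly, giving $F(ts)\ge t^\theta F(s)$ for $t\ge1$, so $h(t)\to\infty$ as $t\to\infty$. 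Hence $g$ has a unique maximum at some $t_u>0$ with $t_uu\in\mathcal N$, and $c=\inf_{u^+\ne0}\max_{t>0}I(tu)$. In the same way one shows $c>0$: for small $\|u\|$ the nonlocal term is $o(\|u\|^2)$, so $\mathcal N$ is bounded away from $0$. On $\mathcal N$,
\[
I(u)=I(u)-\tfrac1{2}\langle I'(u),u\rangle
=\tfrac12\int_{\R^2}\big(|x|^{-\mu}*F(u)\big)\big(f(u)u-F(u)\big)\,dx
\ge \tfrac{\theta-1}{2}\int_{\R^2}\big(|x|^{-\mu}*F(u)\big)F(u)\,dx\ge0 .
\]
This alone does not bound $\|u\|$. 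Instead, bound $\|u\|^2$ using $I(u)=\tfrac12\|u\|^2-\tfrac12 D(u)$ together with $D(u)\le \frac{2}{\theta-1}I(u)$, where $D(u)$ denotes the double integral above. Then $\|u\|^2\le 2I(u)(1+\frac1{\theta-1})$, so minimizing sequences are bounded.

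Take a minimizing sequence $u_n\in\mathcal N$; by Ekeland's principle, or via the homeomorphism between $\mathcal N$ and the unit sphere (Szulkin--Weth), we may assume $I'(u_n)\to0$. By boundedness, $u_n\rightharpoonup u$ in $E$ and $u_n\to u$ in $L^p$ for all $p\ge2$. The main obstacle is passing to the limit in the Choquard term with exponential nonlinearity. The key point is that $\sup_n\|u_n\|\le K$, and subcriticality lets us choose $\alpha$ so small that $\alpha K^2<4\pi$ with slack: then $e^{\alpha u_n^2}-1$ is bounded in $L^r$ for some $r>1$. Combined with strong $L^p$ convergence and Vitali's theorem (or a Strauss-type compactness lemma), this gives $F(u_n)\to F(u)$ and $f(u_n)u_n\to f(u)u$ in $L^{4/(4-\mu)}$. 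It also gives $\int_{\R^2}(|x|^{-\mu}*F(u_n))f(u_n)\varphi\,dx\to\int_{\R^2}(|x|^{-\mu}*F(u))f(u)\varphi\,dx$ for every test function $\varphi$. Thus $I'(u)=0$.

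It remains to check that $u\neq0$. Since $\|u_n\|^2=D_1(u_n)$, where $D_1$ denotes the double integral with $f(u)u$ in place of $F(u)$, and $\|u_n\|^2\ge\rho>0$, if $u=0$ the convergence above would force $D_1(u_n)\to0$, a contradiction. So $u\in\mathcal N$. By weak lower semicontinuity of $\|\cdot\|$ (or of the nonnegative integrand $f(u)u-F(u)$ through the identity above, with strong convergence), $I(u)\le c$, hence $I(u)=c$. By the first paragraph, $u$ is a positive solution. Every nontrivial solution lies in $\mathcal N$, so $u$ is a least energy positive solution.
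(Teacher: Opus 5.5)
Your argument is correct in outline, but it takes a different route from the paper.

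\textbf{Your route.} You extract a Palais--Smale minimizing sequence on $\mathcal N$ through the Szulkin--Weth reduction. The weak limit is then a critical point simply by passing to the limit in $I'(u_n)[\varphi]$. Nonnegativity comes afterwards, from testing with $u^-$.

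\textbf{The paper's route.} The paper never uses (PS) information. In Lemma~\ref{Lemma3.6} it replaces $u_n$ by $t_nu_n^+\in\mathcal N$, using $(f_5)$ and $\|u_n^+\|\le\|u_n\|$. It projects the nonzero weak limit $u_0$ onto $\mathcal N$ as $t_0u_0$. It then gets $I(t_0u_0)=c$ from $I(t_0u_n)\le I(u_n)$ (Lemma~\ref{Lemma3.1}) together with weak lower semicontinuity. Criticality of the minimizer is proved by hand: the fiber is deformed to $h(t)=tu+\varepsilon\eta(t)\phi$, and the intermediate value theorem is applied to $t\mapsto I'(h(t))[h(t)]$. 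This needs only $I\in C^1$ and the fibering monotonicity.

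\textbf{What your route needs to be complete.} Yours is shorter, but the machinery must be adapted here, and you should state the adaptations.
\begin{enumerate}
\item Ekeland's principle on $\mathcal N$ viewed as a $C^1$ manifold is not justified. Hypothesis $(f_1)$ gives no growth bound on $f'$, so $u\mapsto I'(u)[u]$ need not be $C^1$ on $E$. The Szulkin--Weth version is the one to use.
\item Because of $(f_5)$, the fiber $t\mapsto I(tw)=\frac{t^2}{2}\|w\|^2$ has no maximum when $w^+=0$. So $\mathcal N$ is homeomorphic to the open, non-complete set $S^+=\{w\in E:\ \|w\|=1,\ w^+\neq0\}$, not to the whole unit sphere.
\item To run Ekeland on $S^+$, add that $\Psi(w)=\max_{t>0}I(tw)\to+\infty$ as $w^+\to0$. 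This holds because, for each fixed $t$, the Choquard term of $tw^+$ tends to $0$. Hence near-minimizers stay inside $S^+$.
\item Also add that $t_w$ is bounded on compact subsets of $S^+$; this follows from $(f_4)$.
\end{enumerate}

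\textbf{Positivity.} Your ``direct'' positivity argument at a zero minimum presupposes classical regularity that you have not shown. Rely on the strong maximum principle instead, as the paper does.
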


\begin{Rek}
In Theorem~\ref{Thm1.1}, in view of assumption $(f_5)$, the monotonicity condition in $(f_3)$ is understood only on the positive half-line. More precisely, the map
\[
t \mapsto \frac{f(t)}{t}
\]
is required to be strictly increasing on $(0,\infty)$, while no monotonicity is needed on $(-\infty,0]$, since $(f_5)$ yields
\[
f(t)=0 \qquad \text{for all } t\le 0.
\]
All uses of $(f_3)$ in the proof of Theorem~\ref{Thm1.1} are restricted to positive arguments.
\end{Rek}

\begin{Thm}\label{Thm1.2}
Assume that $V$ satisfies $(V_1)$--$(V_2)$ and $f$ satisfies $(f_1)$--$(f_4)$ and $(f_6)$. Then \eqref{eq1.1} has a sign-changing solution. If, in addition, $(f_7)$ holds, then \eqref{eq1.1} possesses infinitely many sign-changing solutions.
\end{Thm}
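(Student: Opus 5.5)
We will prove Theorem~\ref{Thm1.2} by the method of invariant sets of descending flow, in the spirit of \cite{Chang2014JDE,Deng2018ADE,Li2017FCAA}. We work in
\[
E=\Big\{u\in H^1(\R^2):\int_{\R^2}V(x)u^2\,dx<\infty\Big\},\qquad \|u\|^2=\int|\nabla u|^2+[u]_s^2+\int V u^2 ,
\]
where $[u]_s$ is the Gagliardo seminorm, and with the functional
\[
I(u)=\tfrac12\|u\|^2-\tfrac12\int_{\R^2}\big(|x|^{-\mu}*F(u)\big)F(u)\,dx .
\]
By $(V_1)$--$(V_2)$ the embedding $E\hookrightarrow L^p(\R^2)$ is compact for every $p\ge2$ (Bartsch--Wang type argument). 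The Trudinger--Moser inequality, the Hardy--Littlewood--Sobolev inequality, $(f_1)$ and $(f_2)$ then give $I\in C^1(E)$ with compact nonlinear part $u\mapsto (|x|^{-\mu}*F(u))f(u)$ on bounded sets. Because $f$ is subcritical for every $\alpha>0$, no threshold on $\|u\|$ is needed.

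\textbf{(PS) condition.} From $(f_4)$ with $\theta>1$ we get
\[
I(u)-\tfrac{1}{2\theta}\langle I'(u),u\rangle\ge\big(\tfrac12-\tfrac1{2\theta}\big)\|u\|^2 ,
\]
since $\int (|x|^{-\mu}*F(u))\big(f(u)u-\theta F(u)\big)\ge0$ when $F\ge0$. Hence (PS) sequences are bounded, and compactness yields strong convergence.

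\textbf{Invariant cones.} Define the descending operator $A:E\to E$ by letting $v=A(u)$ solve
\[
\mathcal L v+V v=\big(|x|^{-\mu}*F(u)\big)f(u).
\]
Then $I'(u)=u-A(u)$. Since the kernel is positive and $F\ge0$, the problem is linear in $v$ with a fixed coefficient; this is essentially why the Choquard structure does not break the argument. Let
\[
P^\pm=\{u\in E:\pm u\ge0\},\qquad D^\pm_\varepsilon=\{u:\operatorname{dist}(u,P^\pm)<\varepsilon\}.
\]
The key lemma is $A(\partial D^\pm_\varepsilon)\subset D^\pm_{\varepsilon/2}$ for small $\varepsilon$. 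To prove it, test the equation for $v$ with $v^+$ (for $D^-$), using that for the mixed operator $\langle \mathcal L v,v^+\rangle\ge\|v^+\|^2$, since the nonlocal cross terms have the right sign. This gives
\[
\|v^+\|^2\le\int\big(|x|^{-\mu}*F(u)\big)f(u^+)v^+ .
\]
By $(f_2)$ and $(f_1)$, for every $\delta>0$ and some $q>2$ we have $|f(t)|\le\delta|t|+C_\delta|t|^{q-1}(e^{\alpha t^2}-1)$. The factor $|x|^{-\mu}*F(u)$ must be controlled in $L^\infty$ or in a suitable $L^r$, uniformly for $u$ in bounded sets; we obtain this from HLS together with the Trudinger--Moser bounds. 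Since $\|u^+\|_p\le C\operatorname{dist}(u,P^-)$, we get
\[
\operatorname{dist}(v,P^-)\le\|v^+\|\le \big(\delta+C\varepsilon^{q-2}\big)\varepsilon\cdot C(\|u\|).
\]
\textbf{This is the main obstacle}: $\|u\|$ is not bounded a priori, so we restrict to level sets or bounded sets as in \cite{Liu--Wang--Zhang}-type frameworks, or use the pseudo-gradient built from $A$ (made locally Lipschitz) on $\{u:I(u)\le c\}$, where boundedness follows from the (PS) estimate. The expected conclusion is then $A(\partial D^\pm_\varepsilon)\subset D^\pm_{\varepsilon/2}$.

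\textbf{Existence.} For existence of one sign-changing solution, apply the abstract theorem on invariant sets (Liu--Liu--Wang): for a path $h:[0,1]\to E$ with $h(0)\in D^+\setminus D^-$ and $h(1)\in D^-\setminus D^+$, the min-max over $\Gamma$ gives a critical point in $E\setminus(D^+_\varepsilon\cup D^-_\varepsilon)$. Such a point is sign-changing. The required geometry follows from two facts. First, $\inf_{\overline{D^+_\varepsilon}\cap\overline{D^-_\varepsilon}}I>0$, since near zero $I(u)\ge\frac14\|u\|^2$ by $(f_2)$ and since $D^+_\varepsilon\cap D^-_\varepsilon\subset B_{C\varepsilon}$ in the $L^p$ sense. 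Second, along a path joining $t\varphi_1$ and $-t\varphi_2$ with disjoint supports, $I\to-\infty$ as $t\to\infty$, by superquadraticity: $(f_3)$ and $(f_6)$ give $F(t)/t^2\to\infty$. For infinitely many solutions under $(f_7)$, $I$ is even and $A$ is odd. We then use the symmetric version with genus: take $E_k$ finite-dimensional with $\sup_{E_k\setminus B_{R_k}}I<0$ and define
\[
c_k=\inf_{B\in\Gamma_k}\sup_{B\setminus W}I,\qquad W=D^+_\varepsilon\cup D^-_\varepsilon .
\]
Each $c_k$ is a critical level with a sign-changing critical point. We then show $c_k\to\infty$, via a standard estimate on $E_{k-1}^\perp$ using compactness of the embedding: $\beta_k=\sup_{u\in E_{k-1}^\perp,\|u\|=1}\|u\|_p\to0$.
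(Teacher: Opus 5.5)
Your architecture matches the paper's. You use the operator $A$ (the paper uses $A_\beta$ with a shift $\beta>0$; your $\beta=0$ works equally well since $V\ge V_0>0$), invariant neighbourhoods of $P^\pm$, the Liu--Liu--Wang theorems, a disjoint-support path, and genus under $(f_7)$.

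\textbf{The genuine gap is the lower bound on the linking set.} As written, $\inf_{\overline{D^+_\varepsilon}\cap\overline{D^-_\varepsilon}}I>0$ is false, because $0\in P^+\cap P^-$ and $I(0)=0$. The relevant quantity is $c_*=\inf I$ over $\partial D^+_\varepsilon\cap\partial D^-_\varepsilon$, which is the paper's $P=\partial Y_1\cap\partial Y_2$.

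More importantly, your justification does not work. Smallness of $D^+_\varepsilon\cap D^-_\varepsilon$ ``in the $L^p$ sense'' cannot control the Choquard term under Trudinger--Moser growth, which needs an $H^1$ bound. With distance-type neighbourhoods this intersection is not small in $E$. Take $0\le\psi\in C_c^\infty(\R^2)$ with $\|\psi\|<\varepsilon$. Then $u=\frac{\psi}{2}(1+\sin Nx_1)\in P^+$ has $\operatorname{dist}(u,P^-)\le\|\psi\|$ (compare with $-\frac{\psi}{2}(1-\sin Nx_1)\in P^-$), yet $\|u\|\to\infty$ as $N\to\infty$.

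The paper's choice $P^\pm_\varepsilon=\{\|u^\mp\|<\varepsilon\}$ is exactly what forces $\sqrt2\,\varepsilon\le\|u\|\le2\varepsilon$ on $P$. The small-norm estimate then yields $c_*(\varepsilon)\ge\varepsilon^2/2$ (Lemma~\ref{Lemma4.5}).

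With your sets you would need a separate argument. On the boundary intersection, $\|u\|\ge\operatorname{dist}(u,P^+)=\varepsilon$. Also $|u|\le|\phi_1|+|\phi_2|$ a.e., where $\phi_i=u-w_i$ with $w_1\in P^+$, $w_2\in P^-$ nearly optimal. Combine this with the monotonicity of $F$ in $|t|$ from $(f_6)$ and the positivity of the kernel. Without such an argument your proof does not even give $c_*>-\infty$, which is all the existence part needs. For multiplicity, $c_*>0$ is indispensable, since $F_G=\{0\}$ and $I(0)=0$ must lie below $c_*$.

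\textbf{Three further points.}

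\emph{Boundedness.} Your remedy for the $R$-dependence of the invariance estimate is wrong. Sublevel sets $\{I\le c\}$ are unbounded, since $I(tu)\to-\infty$. The inequality $I(u)-\frac1{2\theta}I'(u)[u]\ge(\frac12-\frac1{2\theta})\|u\|^2$ only bounds sets on which $\|I'(u)\|$ is also controlled. The paper instead works level by level in a ball $B_R\supset K_c$, where $K_c$ is compact by Lemma~\ref{Lemma4.4}, with $\beta\le\beta_R$ (Proposition~\ref{Pro4.1}, Corollary~\ref{Cor4.2}, Lemma~\ref{Lemma4.8}). Note that the admissible size of $\varepsilon$ depends on $R$ through $\|F(u)\|_{L^{p_\mu}}$, so this localization has to be organized carefully.

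\emph{Growth along the path.} $(f_3)$ and $(f_6)$ do not imply $F(t)/t^2\to\infty$, since $f(t)/t$ may increase to a finite limit. What makes $I\to-\infty$ on the path is $(f_4)$, which gives $F(\tau)\ge C_0|\tau|^\theta$ for $|\tau|\ge1$. The double integral then grows like $R^{2\theta}$ with $2\theta>2$ (Lemma~\ref{Lemma4.6}).

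\emph{Multiplicity.} $\Gamma_k$ is never defined. Proving $c_k\to\infty$ through $\beta_k\to0$ would need an intersection lemma: every admissible set meets $(E_{k-1}^\perp\cap\partial B_r)\setminus W$. You do not supply this, and it is not automatic once $W$ is removed. The paper does not go this way; it applies Theorem~\ref{Thm4.2} directly. That theorem already yields $c_j\ge c_*$, $K_{c_j}\setminus W\ne\varnothing$ and $c_j\to\infty$, once two things are in place. The first is $G$-admissibility. The second is a family of odd maps $\varphi_n=R_nT_n$ on $2n$-dimensional spaces of sign-changing $C_c^\infty$ functions with $\sup_{\{0\}\cup\varphi_n(\partial B_{2n})}I<c_*$.
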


The paper is organized as follows. In Section~2, we set up the variational framework for \eqref{eq1.1} and establish several technical lemmas. In Section~3, we prove the existence of a least energy positive solution. In Section~4, we study sign-changing solutions and prove Theorem~\ref{Thm1.2}.

\section{The Variational Framework and Some Technical Lemmas}

For $p\in[1,+\infty)$ we write
\[
\|u\|_{p}=\Bigl(\int_{\R^{2}}|u|^{p}\,dx\Bigr)^{\frac{1}{p}},
\qquad
\|u\|_{\infty}=\operatorname*{ess\,sup}_{x\in\R^{2}}|u(x)|.
\]

The Sobolev space $H^{1}(\R^{2})$ is defined by
\[
H^{1}(\R^{2})
=\bigl\{u\in L^{2}(\R^{2}) : \nabla u\in L^{2}(\R^{2};\R^{2})\bigr\},
\]
endowed with the norm
\[
\|u\|_{H^{1}(\R^{2})}
=\Bigl(\int_{\R^{2}}\bigl(|u|^{2}+|\nabla u|^{2}\bigr)\,dx\Bigr)^{\frac12}.
\]

For $s\in(0,1)$, the fractional Sobolev space $H^{s}(\R^{2})$ is defined by
\[
H^{s}(\R^{2})
=\Bigl\{u\in L^{2}(\R^{2}) :
\int_{\R^{2}}\int_{\R^{2}}
\frac{|u(x)-u(y)|^{2}}{|x-y|^{2+2s}}\,dx\,dy<\infty\Bigr\},
\]
with norm
\[
\|u\|_{H^{s}(\R^{2})}
=\Bigl(\int_{\R^{2}}|u|^{2}\,dx
+\int_{\R^{2}}\int_{\R^{2}}
\frac{|u(x)-u(y)|^{2}}{|x-y|^{2+2s}}\,dx\,dy\Bigr)^{\frac12},
\]
and Gagliardo seminorm
\[
[u]_{s}
=\Bigl(\int_{\R^{2}}\int_{\R^{2}}
\frac{|u(x)-u(y)|^{2}}{|x-y|^{2+2s}}\,dx\,dy\Bigr)^{\frac12}.
\]

The following embedding lemma can be found in \cite{Anthal2023JMAA}.

\begin{Lem}\label{Lemma2.1}
Let $0<s<1$. Then $H^{1}(\R^{2})$ is continuously embedded into $H^{s}(\R^{2})$. More precisely, there exists a constant $C_{s}>0$ such that, for every $u\in H^{1}(\R^{2})$,
\[
[u]_{s}^{2}\le C_{s}\,\|u\|_{H^{1}(\R^{2})}^{2}
=C_{s}\bigl(\|u\|_{L^{2}(\R^{2})}^{2}+\|\nabla u\|_{L^{2}(\R^{2})}^{2}\bigr).
\]
\end{Lem}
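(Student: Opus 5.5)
The embedding follows from splitting the Gagliardo double integral into a near-diagonal part and a far part. Write
\[
[u]_s^2=\int_{\R^2}\int_{\R^2}\frac{|u(x)-u(y)|^2}{|x-y|^{2+2s}}\,dx\,dy
=\int_{\R^2}\int_{\R^2}\frac{|u(x+h)-u(x)|^2}{|h|^{2+2s}}\,dx\,dh,
\]
using the change of variables $y=x+h$. I split the $h$-integral into the regions $\{|h|\ge1\}$ and $\{|h|<1\}$ and treat them separately. By density of $C_c^\infty(\R^2)$ in $H^1(\R^2)$, it suffices to prove the estimate for smooth compactly supported $u$. Fatou's lemma then passes the inequality to the limit along an approximating sequence, since $u_n\to u$ in $L^2$ yields, for a subsequence, a.e. convergence of the integrand.

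\textbf{Far part $|h|\ge1$.} Here I use $|u(x+h)-u(x)|^2\le 2|u(x+h)|^2+2|u(x)|^2$. Together with translation invariance of the $L^2$ norm this gives
\[
\int_{|h|\ge1}\int_{\R^2}\frac{|u(x+h)-u(x)|^2}{|h|^{2+2s}}\,dx\,dh\le 4\|u\|_2^2\int_{|h|\ge1}|h|^{-2-2s}\,dh=\frac{4\pi}{s}\|u\|_2^2.
\]
The $h$-integral is finite because $2+2s>2$.

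\textbf{Near part $|h|<1$.} For smooth $u$, write $u(x+h)-u(x)=\int_0^1\nabla u(x+th)\cdot h\,dt$. By Cauchy--Schwarz in $t$ and Fubini, for each fixed $h$,
\[
\int_{\R^2}|u(x+h)-u(x)|^2dx\le |h|^2\int_0^1\int_{\R^2}|\nabla u(x+th)|^2\,dx\,dt=|h|^2\|\nabla u\|_2^2.
\]
Hence the near part is bounded by
\[
\|\nabla u\|_2^2\int_{|h|<1}|h|^{-2s}\,dh=\frac{2\pi}{2-2s}\|\nabla u\|_2^2 .
\]
This is finite because $2s<2$.

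Adding the two parts gives the claim with $C_s=\max\{4\pi/s,\ \pi/(1-s)\}$. The inequality $\|u\|_{H^s}\le C\|u\|_{H^1}$ then follows immediately, which is the continuity of the embedding.

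The only delicate point is the density/limiting step, and Fatou handles it. Alternatively, Plancherel gives $[u]_s^2=c_s\int|\xi|^{2s}|\hat u|^2\,d\xi$, and the pointwise bound $|\xi|^{2s}\le 1+|\xi|^2$ yields the result directly; that route requires knowing the Fourier constant $c_s$, which is why I prefer the elementary splitting.
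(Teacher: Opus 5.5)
Your proof is correct: the constants check out ($\int_{|h|\ge1}|h|^{-2-2s}\,dh=\pi/s$ and $\int_{|h|<1}|h|^{-2s}\,dh=\pi/(1-s)$), and the Fatou step is legitimate because a.e.\ convergence of $u_n$ gives a.e.\ convergence of the Gagliardo integrand on $\R^2\times\R^2$. The paper gives no argument and simply cites \cite{Anthal2023JMAA}; your near/far splitting of the Gagliardo integral is the standard proof behind this embedding (as in Di Nezza--Palatucci--Valdinoci) and proves the lemma in the stated $\R^2$ form.
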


Motivated by \eqref{eq1.1}, we introduce the Hilbert space
\[
E
=\Bigl\{u\in H^{1}(\R^{2}) : \int_{\R^{2}}V(x)|u|^{2}\,dx<+\infty\Bigr\},
\]
equipped with the inner product
\[
\begin{aligned}
\langle u,v\rangle
&=\int_{\R^{2}}\nabla u\cdot\nabla v\,dx
+\frac{1}{2}\int_{\R^{2}}\int_{\R^{2}}
\frac{(u(x)-u(y))(v(x)-v(y))}{|x-y|^{2+2s}}\,dx\,dy \\
&\quad
+\int_{\R^{2}}V(x)\,u\,v\,dx,
\end{aligned}
\]
and the associated norm
\[
\|u\|^{2}
=\int_{\R^{2}}|\nabla u|^{2}\,dx
+\frac{1}{2}\int_{\R^{2}}\int_{\R^{2}}
\frac{|u(x)-u(y)|^{2}}{|x-y|^{2+2s}}\,dx\,dy
+\int_{\R^{2}}V(x)\,|u|^{2}\,dx.
\]
For convenience, we also set
\[
\|u\|_{V}^{2}
=\int_{\R^{2}}|\nabla u|^{2}\,dx
+\int_{\R^{2}}V(x)\,|u|^{2}\,dx.
\]

By Lemma~\ref{Lemma2.1} and assumption $(V_1)$, the norms $\|\cdot\|$ and $\|\cdot\|_{V}$ are equivalent on $E$. Moreover, $(V_1)$ gives the continuous embedding $E\hookrightarrow H^1(\R^2)$. Indeed, for every $u\in E$,
\[
\begin{aligned}
\|u\|_{H^1(\R^2)}^2
&=\int_{\R^2}|\nabla u|^2\,dx+\int_{\R^2}|u|^2\,dx \\
&\le \int_{\R^2}|\nabla u|^2\,dx+\frac1{V_0}\int_{\R^2}V(x)|u|^2\,dx \\
&\le \max\{1,V_0^{-1}\}\|u\|^2.
\end{aligned}
\]
Thus $\|u\|_{H^1(\R^2)}\le C_E\|u\|$ for some $C_E>0$ and all $u\in E$. We shall use this continuous embedding below; no equivalence between $\|\cdot\|$ and the usual $H^1(\R^2)$ norm is needed.

\begin{Rek}\label{Rek2.1}
Under $(V_1)$--$(V_2)$, the space $E$ is compactly embedded into $L^{p}(\R^{2})$ for every $2\le p<+\infty$, cf.\ \cite{kondrat1999discreteness}.
\end{Rek}

For $u\in C_{c}^{\infty}(\R^{2})$, the fractional Laplacian admits a pointwise principal value representation, up to a positive normalization constant. In what follows, we choose the normalization so that, for all $u,v\in H^{s}(\R^{2})$,
\[
\int_{\R^{2}}(-\Delta)^{s}u\,v\,dx
=
\frac12\int_{\R^{2}}\int_{\R^{2}}
\frac{(u(x)-u(y))(v(x)-v(y))}{|x-y|^{2+2s}}\,dx\,dy.
\]

We next collect several analytic tools that will be used repeatedly. The following Trudinger--Moser inequality in $\R^{2}$ goes back to Cao, see \cite{Cao1992CPDE}.

\begin{Pro}\label{pro2.1}
If $\alpha>0$ and $u\in H^{1}(\R^{2})$, then
\[
\int_{\R^{2}}\bigl(\mathrm{e}^{\alpha u^{2}}-1\bigr)\,dx<+\infty.
\]
Moreover, if $\alpha<4\pi$ and $\|u\|_{2}\le M<+\infty$, then there exists a constant $C_{1}=C_{1}(M,\alpha)>0$ such that
\[
\sup_{\|\nabla u\|_{2}\le1,\;\|u\|_{2}\le M}
\int_{\R^{2}}\bigl(\mathrm{e}^{\alpha u^{2}}-1\bigr)\,dx
\le C_{1}.
\]
\end{Pro}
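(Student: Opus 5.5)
This is Cao's Trudinger--Moser inequality on $\R^2$; I would prove it by reduction to the bounded-domain Moser inequality via symmetrization.

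\textbf{Step 1 (reduction).} Since $u\mapsto|u|$ preserves $\|\nabla u\|_2$ and $\|u\|_2$, we may take $u\ge0$. Let $u^*$ be the Schwarz symmetrization. By the P\'olya--Szeg\H{o} inequality, $\|\nabla u^*\|_2\le\|\nabla u\|_2$. Equimeasurability gives $\|u^*\|_2=\|u\|_2$ and
\[
\int_{\R^2}(e^{\alpha u^2}-1)\,dx=\int_{\R^2}(e^{\alpha (u^*)^2}-1)\,dx .
\]
So it suffices to treat radial, nonincreasing $u$. By density (Fatou) we may also assume $u$ is smooth with compact support.

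\textbf{Step 2 (splitting at radius $r_0$).} Split $\R^2$ into $B_{r_0}$ and its complement, with $r_0$ fixed below.
\begin{itemize}
\item[$\bullet$] \emph{Outside $B_{r_0}$.} Monotonicity gives the radial decay bound $u(r)^2\le \|u\|_2^2/(\pi r^2)\le M^2/(\pi r_0^2)$ for $r\ge r_0$. Expand $e^{t}-1\le t\,e^{t}$. Then
\[
\int_{|x|>r_0}(e^{\alpha u^2}-1)\,dx\le \alpha\,e^{\alpha M^2/(\pi r_0^2)}M^2 ,
\]
which is bounded in terms of $M$, $\alpha$ and $r_0$.
\item[$\bullet$] \emph{Inside $B_{r_0}$.} Set $v(x)=u(x)-u(r_0)$ on $B_{r_0}$, so $v\in H^1_0(B_{r_0})$ and $\|\nabla v\|_2\le1$. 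For any $\varepsilon>0$,
\[
u^2\le (1+\varepsilon)v^2+(1+\varepsilon^{-1})u(r_0)^2 .
\]
Choose $\varepsilon$ with $\alpha(1+\varepsilon)\le4\pi$, which is possible since $\alpha<4\pi$. Moser's inequality on the disc, $\sup_{\|\nabla v\|_2\le1}\int_{B_{r_0}}e^{4\pi v^2}\le C\pi r_0^2$, then bounds $\int_{B_{r_0}}e^{\alpha u^2}$ by $C r_0^2\, e^{\alpha(1+\varepsilon^{-1})M^2/(\pi r_0^2)}$. Taking $r_0=1$, this constant depends only on $M$ and $\alpha$.
\end{itemize}
Combining the two regions gives $C_1(M,\alpha)$.

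\textbf{Step 3 (finiteness for every $\alpha>0$).} This part is the main obstacle, since $\alpha$ may exceed $4\pi$.
\begin{itemize}
\item[$\bullet$] Given $u\in H^1$, approximate $u$ by $w\in C_c^\infty$ with $\|\nabla(u-w)\|_2$ small.
\item[$\bullet$] Use $u^2\le(1+\varepsilon)(u-w)^2+(1+\varepsilon^{-1})w^2$.
\item[$\bullet$] Apply H\"older to $e^{\alpha u^2}-1$, after first restricting to a large ball outside which the bounded tail estimate from Step 2 applies to $|u|$ (rearranged).
\item[$\bullet$] Normalize $u-w$: if $\alpha(1+\varepsilon)p\|\nabla(u-w)\|_2^2<4\pi$, Step 2 bounds the exponential integral of $(u-w)$ (this also needs $\|u-w\|_2$ bounded).
\item[$\bullet$] The factor $e^{p'\alpha(1+\varepsilon^{-1})w^2}$ is bounded since $w\in L^\infty$.
\end{itemize}
Outside the large ball the tail computation of Step 2, applied to $u^*$, gives finiteness. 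The delicate point is organizing the H\"older splitting so that each factor is integrable globally rather than only locally. To handle this, I will work with $e^{t}-1$ and the bound $ab\le$ sums, and control the region where $w\neq0$ (compact) separately from its complement, where $u=u-w$.
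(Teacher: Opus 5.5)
The paper does not prove this proposition; it quotes it from Cao. Your argument is the standard proof of Cao's inequality (absolute value and Schwarz symmetrization, the decay bound $u^*(r)^2\le\|u\|_2^2/(\pi r^2)$ on the exterior, Moser's inequality on the disc for $u^*-u^*(r_0)$, and density for arbitrary $\alpha>0$), and it is correct. Step 3 closes as your last sentence suggests and needs no H\"older splitting: off $\operatorname{supp} w$ one has $u=u-w$, while on $\operatorname{supp} w$ one has $e^{\alpha u^2}\le e^{\alpha(1+\varepsilon^{-1})\|w\|_\infty^2}e^{\alpha(1+\varepsilon)(u-w)^2}$, and both pieces are finite by Step 2 applied to $(u-w)/\|\nabla(u-w)\|_2$ once $\alpha(1+\varepsilon)\|\nabla(u-w)\|_2^2<4\pi$.
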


\begin{Lem}\cite{MR1817225}\label{Lemma2.2}
Let $t,r>1$ and $0<\mu<N$ satisfy
\[
\frac{1}{t}+\frac{\mu}{N}+\frac{1}{r}=2.
\]
If $g\in L^{t}(\R^{N})$ and $h\in L^{r}(\R^{N})$, then there exists a constant $C(t,N,\mu,r)>0$, independent of $g$ and $h$, such that
\[
\left|\int_{\R^{N}}\left(\frac{1}{|x|^{\mu}}*g\right)h\,dx\right|
\le C(t,N,\mu,r)\,\|g\|_{t}\,\|h\|_{r}.
\]
In particular, when $N=2$ and $t=r=\frac{4}{4-\mu}$, one has
\[
\int_{\R^{2}}\left(\frac{1}{|x|^{\mu}}*F(u)\right)F(u)\,dx
\le C_{\mu}\,\|F(u)\|_{\frac{4}{4-\mu}}^{2},
\]
for some constant $C_{\mu}>0$.
\end{Lem}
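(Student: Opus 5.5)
This is the Hardy--Littlewood--Sobolev inequality, cited from \cite{MR1817225}, so the general statement needs no proof here. Still, I would check it by the standard route, which has three steps: a weak-type Young inequality, interpolation, and duality. The special case then follows by substitution.

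First, I would note that $|x|^{-\mu}\in L^{N/\mu,\infty}(\R^N)$, since $\mathrm{meas}\{|x|^{-\mu}>\lambda\}=c_N\lambda^{-N/\mu}$. Next, I would show that the map $g\mapsto |x|^{-\mu}*g$ is bounded from $L^{t}$ to $L^{r'}$, where $\frac1{r'}=\frac1t+\frac{\mu}{N}-1$. The cleanest way is Hedberg's splitting. Write
\[
\Bigl(\frac{1}{|x|^{\mu}}*|g|\Bigr)(x)=\int_{|x-y|<R}\frac{|g(y)|}{|x-y|^{\mu}}\,dy+\int_{|x-y|\ge R}\frac{|g(y)|}{|x-y|^{\mu}}\,dy .
\]
The first term is bounded by $CR^{N-\mu}Mg(x)$, where $M$ is the Hardy--Littlewood maximal function. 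The second is bounded via H\"older by $CR^{N-\mu-N/t}\|g\|_t$; this requires $\mu t'>N$, which is equivalent to $\frac1t+\frac{\mu}{N}>1$ and holds because $r<\infty$. Optimizing in $R$ pointwise gives
\[
\Bigl|\frac{1}{|x|^{\mu}}*g\Bigr|(x)\le C\,(Mg(x))^{t/r'}\|g\|_t^{1-t/r'} .
\]
Taking $L^{r'}$ norms and using the maximal inequality on $L^t$ (valid since $t>1$) yields $\| |x|^{-\mu}*g\|_{r'}\le C\|g\|_t$. The condition $r>1$ guarantees $r'<\infty$ and the correct ordering of exponents.

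Then H\"older's inequality with $h\in L^r$ gives the bilinear bound
\[
\left|\int_{\R^{N}}\left(\frac{1}{|x|^{\mu}}*g\right)h\,dx\right|\le C(t,N,\mu,r)\,\|g\|_{t}\,\|h\|_{r}.
\]
The main point requiring care is this pointwise optimization step and checking that every exponent stays in the admissible range. Everything else is routine.

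For the special case, I would set $N=2$ and $t=r=\frac{4}{4-\mu}$. Then $\frac2t=\frac{4-\mu}{2}=2-\frac{\mu}{2}$, so the exponent relation holds, and $t>1$ because $\mu>0$. Applying the inequality with $g=h=F(u)$ gives the stated bound with $C_\mu=C(t,2,\mu,t)$. Here $F(u)\in L^{4/(4-\mu)}$ is assumed; otherwise the right-hand side is infinite and there is nothing to prove. In later use, this integrability would come from $(f_1)$, $(f_2)$ and Proposition~\ref{pro2.1}. The left-hand side is real and nonnegative, being a positive-definite Riesz energy, so the absolute value can be dropped.
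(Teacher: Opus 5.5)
Your argument is correct, and the special case is obtained exactly as intended ($N=2$, $t=r=\frac{4}{4-\mu}$, $g=h=F(u)$). The paper proves nothing here; it only quotes the Hardy--Littlewood--Sobolev inequality from Lieb--Loss \cite{MR1817225}.

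In that reference the general (non-sharp) inequality comes from the layer-cake representation of $g$, $h$ and the kernel $|x|^{-\mu}$, together with elementary measure bounds on the resulting triple integral of characteristic functions. The sharp constant in the diagonal case $t=r$, which is the only case this paper needs, is obtained there by a separate argument.

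Your route through Hedberg's splitting is genuinely different. It proves the operator bound $\|\,|x|^{-\mu}*g\|_{r'}\le C\|g\|_t$ directly, via the pointwise estimate $|\,|x|^{-\mu}*g|\le C(Mg)^{t/r'}\|g\|_t^{1-t/r'}$. It is short and transparent, but it rests on the Hardy--Littlewood maximal theorem and says nothing about the best constant. As a side benefit, it directly gives the mapping property $\frac{1}{|x|^{\mu}}*F(u)\in L^{4/\mu}(\R^2)$ for $F(u)\in L^{\frac{4}{4-\mu}}$, which the paper invokes in the positivity part of the proof of Theorem~\ref{Thm1.1}.

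Your exponent bookkeeping checks out: $\frac{1}{r'}=\frac1t+\frac{\mu}{N}-1$, the choice of $R$, and $t/r'\in(0,1)$ are all right. There is one slip. The condition $\mu t'>N$, i.e.\ $\frac1t+\frac{\mu}{N}>1$, holds because $r>1$ (equivalently $r'<\infty$), not because $r<\infty$; the latter only gives $\frac1t+\frac{\mu}{N}<2$.

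Two minor presentational points:
\begin{itemize}
\item The weak-Young/interpolation outline you announce is never used. The observation $|x|^{-\mu}\in L^{N/\mu,\infty}$ plays no role once you switch to Hedberg's splitting.
\item Dropping the absolute value in the special case needs only $a\le|a|$, not positive definiteness of the Riesz kernel. In any case $F\ge 0$ in the paper by $(f_4)$.
\end{itemize}
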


By $(f_1)$ and $(f_2)$, for every $\varepsilon>0$, every $\alpha>0$, and every $q>2$, there exist constants $C_{\varepsilon,\alpha,q}>0$ and $\widetilde C_{\varepsilon,\alpha,q}>0$ such that, for all $t\in\R$,
\begin{equation}\label{eq2.1}
|f(t)|
\le
\varepsilon |t|^{\frac{2-\mu}{2}}
+
C_{\varepsilon,\alpha,q}\,|t|^{q-1}\bigl[\exp(\alpha|t|^{2})-1\bigr]
\end{equation}
and
\begin{equation}\label{eq2.2}
|F(t)|
\le
\varepsilon |t|^{\frac{4-\mu}{2}}
+
\widetilde C_{\varepsilon,\alpha,q}\,|t|^{q}\bigl[\exp(\alpha|t|^{2})-1\bigr].
\end{equation}

\begin{Lem}\label{Lemma2.3}
Assume that $f$ satisfies $(f_1)$ and $(f_2)$.
Let $p_\mu=\frac{4}{4-\mu}$.
Then, for every $R>0$, there exists a constant $C_R>0$ such that
\[
\sup_{\|u\|\le R}\|F(u)\|_{L^{p_\mu}(\R^2)}\le C_R,\quad
\sup_{\|u\|\le R}\|f(u)u\|_{L^{p_\mu}(\R^2)}\le C_R,\quad
\sup_{\substack{\|u\|\le R\\ \|v\|\le 1}}
\|f(u)v\|_{L^{p_\mu}(\R^2)}
\le C_R.
\]
\end{Lem}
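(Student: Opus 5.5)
The plan is to bound each of the three quantities pointwise with \eqref{eq2.1}--\eqref{eq2.2}, choosing $\alpha$ small depending on $R$. Then split by H\"older, and control the exponential factor uniformly through Proposition~\ref{pro2.1}. Fix $R>0$ and $\varepsilon=1$. The continuous embedding $E\hookrightarrow H^1(\R^2)$ gives $\|u\|_{H^1}\le C_E\|u\|\le C_ER$, so $\|\nabla u\|_2\le C_ER$ and $\|u\|_2\le C_ER$ for all $\|u\|\le R$.

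\textbf{The term $F(u)$.} By \eqref{eq2.2},
\[
\|F(u)\|_{p_\mu}\le \bigl\||u|^{\frac{4-\mu}{2}}\bigr\|_{p_\mu}+\widetilde C\,\bigl\||u|^{q}(e^{\alpha u^2}-1)\bigr\|_{p_\mu}.
\]
The first piece equals $\|u\|_2^{(4-\mu)/2}\le (C_ER)^{(4-\mu)/2}$, since $\frac{4-\mu}{2}p_\mu=2$. For the second piece, H\"older with exponents $2,2$ gives
\[
\bigl\||u|^{q}(e^{\alpha u^2}-1)\bigr\|_{p_\mu}\le \|u\|_{2qp_\mu}^{q}\,\Bigl(\int(e^{\alpha u^2}-1)^{2p_\mu}\Bigr)^{\frac1{2p_\mu}}.
\]
The Sobolev embedding $H^1\hookrightarrow L^{2qp_\mu}$ bounds the first factor by $C(C_ER)^q$. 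For the second factor use the elementary inequality $(e^{a}-1)^{m}\le e^{ma}-1$ for $m\ge1$, $a\ge0$. This reduces it to $\int(e^{2p_\mu\alpha u^2}-1)\,dx$. Normalize $w=u/\|\nabla u\|_2$, so that $\|\nabla w\|_2=1$. Then $2p_\mu\alpha u^2\le 2p_\mu\alpha (C_ER)^2 w^2$. If $u$ is constant-free, the case $\nabla u=0$ forces $u=0$, which is trivial. Choose $\alpha$ so that $2p_\mu\alpha(C_ER)^2<4\pi$.

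\textbf{The main obstacle.} The constraint $\|w\|_2\le M$ in Proposition~\ref{pro2.1} is not uniform, because $\|\nabla u\|_2$ may be small. I would handle this by using instead $\widetilde w=u/(C_ER)$. Then $\|\nabla \widetilde w\|_2\le 1$ and $\|\widetilde w\|_2\le 1$, and the supremum in Proposition~\ref{pro2.1} is taken over $\|\nabla \widetilde w\|_2\le1$, so this is exactly allowed. It gives
\[
\int(e^{2p_\mu\alpha u^2}-1)=\int(e^{2p_\mu\alpha(C_ER)^2\widetilde w^2}-1)\le C_1(1,2p_\mu\alpha (C_ER)^2).
\]
This bound is uniform in $u$, so the first estimate follows.

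\textbf{The term $f(u)u$.} This is identical. Use \eqref{eq2.1} multiplied by $|u|$, which gives $|f(u)u|\le |u|^{\frac{4-\mu}{2}}+C|u|^{q}(e^{\alpha u^2}-1)$, and repeat the argument above.

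\textbf{The term $f(u)v$.} Here $\|v\|\le1$, and
\[
|f(u)v|\le |u|^{\frac{2-\mu}{2}}|v|+C|u|^{q-1}(e^{\alpha u^2}-1)|v|.
\]
For the first piece, H\"older with exponents $\frac{4}{2-\mu}\cdot\frac1{p_\mu}$-type pairing gives $\bigl\||u|^{\frac{2-\mu}{2}}v\bigr\|_{p_\mu}\le \|u\|_{2}^{\frac{2-\mu}{2}}\|v\|_{2}$. This works because $\frac{1}{p_\mu}=\frac{2-\mu}{4}+\frac12$. For the second piece, use three-factor H\"older, putting $|u|^{q-1}$ and $|v|$ in $L^{3p_\mu}$-type spaces, which are controlled by Sobolev, and the exponential in $L^{3p_\mu}$. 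Apply the same Trudinger--Moser bound with $\alpha$ chosen so that $3p_\mu\alpha(C_ER)^2<4\pi$. Taking $\alpha$ as the minimum over the three cases yields a single constant $C_R$.
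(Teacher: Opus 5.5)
Your proposal is correct and follows essentially the same route as the paper: the pointwise bounds \eqref{eq2.1}--\eqref{eq2.2}, the normalization $u/(C_ER)$ so that Proposition~\ref{pro2.1} applies uniformly with $\|\nabla \widetilde w\|_2\le 1$ and $\|\widetilde w\|_2\le 1$, and a H\"older splitting. The only differences are cosmetic: you fix the H\"older exponents ($2,2$ and three factors in $L^{3p_\mu}$) where the paper uses a general $\sigma>1$, and you state the inequality $(e^{a}-1)^m\le e^{ma}-1$ explicitly, which the paper uses implicitly; the detour through $u/\|\nabla u\|_2$ is unnecessary, as you noticed yourself.
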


\begin{proof}
By the continuous embedding $E\hookrightarrow H^1(\R^2)$ established above, there exists $C_E>0$ such that
\[
\|u\|_{H^1(\R^2)}\le C_E\|u\|
\qquad\text{for all }u\in E.
\]

Fix $R>0$. Choose $\sigma>1$ and then $\alpha>0$ so small that
\[
\alpha p_\mu \sigma C_E^2R^2<4\pi.
\]
By $(f_1)$ and $(f_2)$, for every $q>2$ there exists $C_{\alpha,q}>0$ such that
\[
|F(t)|+|f(t)t|
\le
C_{\alpha,q}\Bigl(|t|^{\frac{4-\mu}{2}}+|t|^q(e^{\alpha t^2}-1)\Bigr)
\qquad\text{for all }t\in\R,
\]
and
\[
|f(t)s|
\le
C_{\alpha,q}\Bigl(|t|^{\frac{2-\mu}{2}}|s|+|t|^{q-1}(e^{\alpha t^2}-1)|s|\Bigr)
\qquad\text{for all }t,s\in\R.
\]

Let $\|u\|\le R$ and set
\[
w=\frac{u}{C_ER}.
\]
Then $\|w\|_{H^1(\R^2)}\le1$. Hence Proposition~\ref{pro2.1} yields
\[
\int_{\R^2}\bigl(e^{\alpha p_\mu\sigma u^2}-1\bigr)\,dx
=
\int_{\R^2}\Bigl(e^{\alpha p_\mu\sigma C_E^2R^2 w^2}-1\Bigr)\,dx
\le C_R,
\]
and therefore
\[
\sup_{\|u\|\le R}\|e^{\alpha u^2}-1\|_{L^{p_\mu\sigma}(\R^2)}\le C_R.
\]

Let $\sigma'$ be the conjugate exponent of $\sigma$. Using the above estimate, Hölder's inequality, and the continuous embeddings of $E$ into $L^p(\R^2)$ for all $p\ge2$, we obtain
\[
\|F(u)\|_{L^{p_\mu}}
\le
C\Bigl(
\|u\|_2^{\frac{4-\mu}{2}}
+
\|u\|_{L^{qp_\mu\sigma'}(\R^2)}^q
\|e^{\alpha u^2}-1\|_{L^{p_\mu\sigma}(\R^2)}
\Bigr)
\le C_R,
\]
and similarly
\[
\|f(u)u\|_{L^{p_\mu}}\le C_R.
\]

Now let $\|u\|\le R$ and $\|v\|\le1$. Again by Hölder's inequality and the Sobolev embeddings,
\[
\begin{aligned}
\|f(u)v\|_{L^{p_\mu}}
&\le
C\Bigl(
\||u|^{\frac{2-\mu}{2}}v\|_{L^{p_\mu}}
+
\||u|^{q-1}(e^{\alpha u^2}-1)v\|_{L^{p_\mu}}
\Bigr) \\
&\le
C\Bigl(
\|u\|_2^{\frac{2-\mu}{2}}\|v\|_2
+
\|e^{\alpha u^2}-1\|_{L^{p_\mu\sigma}(\R^2)}
\|u\|_{L^{2(q-1)p_\mu\sigma'}(\R^2)}^{q-1}
\|v\|_{L^{2p_\mu\sigma'}(\R^2)}
\Bigr)
\le C_R.
\end{aligned}
\]
This proves the desired estimates.
\end{proof}

In view of Lemmas~\ref{Lemma2.2} and \ref{Lemma2.3}, the functional
\[
I(u)
=\frac12\|u\|^{2}
-\frac12\int_{\R^{2}}\left(\frac{1}{|x|^{\mu}}*F(u)\right)F(u)\,dx,
\qquad u\in E,
\]
is well defined on $E$. Moreover, standard arguments show that
\[
I\in C^{1}(E,\R),
\]
and
\[
\begin{aligned}
I'(u)[v]
&=\int_{\R^{2}} \nabla u\cdot \nabla v\,dx
+\frac{1}{2}\int_{\R^{2}}\int_{\R^{2}}
\frac{(u(x)-u(y))(v(x)-v(y))}{|x-y|^{2+2s}}\,dx\,dy \\
&\quad
+\int_{\R^{2}}V(x)\,u\,v\,dx
-\int_{\R^{2}}\left(\frac{1}{|x|^{\mu}}*F(u)\right)f(u)\,v\,dx
\end{aligned}
\]
for all $u,v\in E$.

A function $u\in E$ is called a weak solution of \eqref{eq1.1} if
\[
I'(u)[v]=0
\qquad\text{for all } v\in E.
\]

For every $u\in E$, we write
\[
u^{+}=\max\{u,0\},
\qquad
u^{-}=\min\{u,0\},
\]
so that
\[
u=u^{+}+u^{-},
\qquad
u^{+}u^{-}=0
\quad\text{a.e. in }\R^{2}.
\]
Since the truncation maps $t\mapsto t^{\pm}$ are Lipschitz continuous, one has
\[
u^{+},u^{-}\in H^{1}(\R^{2})\cap H^{s}(\R^{2}).
\]
Moreover, from $|u^{\pm}|\le |u|$ and the definition of $E$, it follows that
\[
u^{+},u^{-}\in E.
\]

\begin{Def}\label{Def2.1}
A weak solution $u\in E$ of \eqref{eq1.1} is called sign-changing, or nodal, if
\[
u^{+}\neq 0
\qquad\text{and}\qquad
u^{-}\neq 0.
\]
\end{Def}

We now introduce the Nehari manifold
\[
\mathcal{N}
=\Bigl\{u\in E\setminus\{0\}: I'(u)[u]=0\Bigr\},
\]
and the ground-state level
\[
c=\inf_{u\in\mathcal N}I(u).
\]

\begin{Def}\label{Def2.2}
A nontrivial critical point $u\in E$ of $I$ is called a least energy weak solution of \eqref{eq1.1} if
\[
I(u)=c.
\]
If, in addition, $u>0$ in $\R^{2}$, then $u$ is called a least energy positive solution.
\end{Def}

\section{Positive solution with least energy}

\begin{Lem}\label{Lemma3.1}
Let $u\in\mathcal N$. Then
\[
I(tu)<I(u)
\qquad\text{for every }t>0\text{ with }t\ne1.
\]
In particular, if
\[
g(t)=I(tu),
\]
then
\[
g'(t)>0 \quad\text{for }0<t<1,
\qquad
g'(t)<0 \quad\text{for }t>1.
\]
\end{Lem}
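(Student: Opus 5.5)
The plan is to study the sign of $g'(t)$ directly, after dividing by $t$. By the formula for $I'$,
\[
g'(t)=I'(tu)[u]=t\|u\|^{2}-\int_{\R^{2}}\Bigl(\frac{1}{|x|^{\mu}}*F(tu)\Bigr)f(tu)u\,dx .
\]
All integrals involved are finite by Lemmas~\ref{Lemma2.2} and \ref{Lemma2.3}. For $t>0$ this gives $g'(t)=t\bigl(\|u\|^{2}-\Psi(t)\bigr)$, where
\[
\Psi(t)=\int_{\R^{2}}\int_{\R^{2}}\frac{1}{|x-y|^{\mu}}\,\frac{F(tu(y))}{t}\,\frac{f(tu(x))u(x)}{t}\,dx\,dy .
\]
Since $u\in\mathcal N$, we have $\Psi(1)=\|u\|^{2}$. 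It therefore suffices to show that $\Psi$ is strictly increasing on $(0,\infty)$. Then $g'>0$ on $(0,1)$ and $g'<0$ on $(1,\infty)$, so $g$ has a strict global maximum on $(0,\infty)$ at $t=1$, which gives $I(tu)<I(u)$ for $t\neq1$.

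The monotonicity of $\Psi$ will be checked pointwise on the two factors of the integrand. First, by $(f_4)$, $F\ge0$. Moreover
\[
\frac{d}{dt}\frac{F(tu(y))}{t}=\frac{f(tu)tu-F(tu)}{t^{2}}\ge\frac{(\theta-1)F(tu)}{t^{2}}\ge0,
\]
so $t\mapsto F(tu(y))/t$ is nonnegative and nondecreasing. Second, set $h(t,x)=f(tu(x))u(x)/t$. Where $u(x)>0$, we have $h=\frac{f(tu)}{tu}u^{2}$, which is strictly increasing in $t$ by $(f_3)$. Where $u(x)<0$, we have $h=-\frac{f(tu)}{|tu|}u^{2}$; as $t$ increases, $tu$ decreases, so $f(tu)/|tu|$ decreases by $(f_3)$ and $h$ strictly increases. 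In the setting of Theorem~\ref{Thm1.1}, $(f_5)$ makes $h\equiv0$ on $\{u\le0\}$, and only the positive-half-line monotonicity is used. In all cases $h\ge0$, since $f(s)s\ge\theta F(s)\ge0$.

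For $0<t_1<t_2$ I will split the difference of the integrands:
\[
\frac{F(t_2u(y))}{t_2}h(t_2,x)-\frac{F(t_1u(y))}{t_1}h(t_1,x)
\ \ge\ \frac{F(t_1u(y))}{t_1}\bigl(h(t_2,x)-h(t_1,x)\bigr)\ \ge 0 .
\]
This yields $\Psi(t_2)\ge\Psi(t_1)$. The main point is strictness: I need the last integrand to be positive on a set of positive measure in $(x,y)$. The argument goes as follows.
\begin{itemize}
\item By $(f_2)$ and $(f_3)$, $f(s)/s>0$ for $s>0$ (and, in the setting of Theorem~\ref{Thm1.2}, $f(s)s>0$ for $s\ne0$). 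Hence $F(s)>0$ for $s>0$ (respectively, for $s\neq0$).
\item Since $u\in\mathcal N$, we have $0<\|u\|^{2}=\int(|x|^{-\mu}*F(u))f(u)u$. This forces $\{u\neq0\}$ to have positive measure. Under $(f_5)$ it forces $\{u>0\}$ to have positive measure.
\item On that set, $F(t_1u(y))>0$ and $h(t_2,x)-h(t_1,x)>0$.
\item The kernel $|x-y|^{-\mu}$ is positive everywhere.
\end{itemize}
Together these give $\Psi(t_2)>\Psi(t_1)$, which completes the argument.
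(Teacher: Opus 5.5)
Your argument is correct in substance and is essentially the paper's. Both compare the two factors $F(tu)$ and $f(tu)u$ of the Choquard integrand pointwise with their values at $t=1$, multiply the two nonnegative inequalities, integrate against the positive kernel, and compare the result with $t\|u\|^2$.

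There is, however, a factor-of-$t$ slip in your reduction. With your definition, $\Psi(t)=t^{-2}\mathcal J(t)$, where $\mathcal J(t)=\int_{\R^2}\bigl(|x|^{-\mu}*F(tu)\bigr)f(tu)u\,dx$. So the correct identity is $g'(t)=t\|u\|^2-t^2\Psi(t)=t\bigl(\|u\|^2-t\Psi(t)\bigr)$, not $t\bigl(\|u\|^2-\Psi(t)\bigr)$. The slip is harmless and in fact simplifies your proof. Since $\Psi\ge0$ is nondecreasing and $\Psi(1)=\|u\|^2>0$, you get $g'(t)\ge t(1-t)\|u\|^2>0$ for $0<t<1$ and $g'(t)\le t(1-t)\|u\|^2<0$ for $t>1$. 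Your whole strictness discussion (positive-measure sets, positivity of $F(t_1u)$ on $\{u>0\}$) is therefore correct but unnecessary.

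This is exactly how the paper obtains strictness for free. It applies $(f_3)$ to both factors, $F(ta)\le t^2F(a)$ and $f(ta)\le tf(a)$ for $0<t\le1$, with the reverse inequalities for $t\ge1$. This yields $\mathcal J(t)\le t^3\|u\|^2$ for $t<1$, the reverse for $t>1$, and hence $g'(t)\ge t(1-t^2)\|u\|^2$ on $(0,1)$. You use $(f_4)$ for the $F$-factor instead, which gives only the weaker scaling $\mathcal J(t)\le t^2\|u\|^2$ for $t<1$ (reversed for $t>1$); this is still enough.

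One advantage of your version is that it never reduces to $u^+$ via $(f_5)$. It therefore also covers the $(f_6)$ setting of Theorem~\ref{Thm1.2}, whereas the paper's proof uses $(f_5)$ and is tied to the setting of Theorem~\ref{Thm1.1}.
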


\begin{proof}
Fix $u\in\mathcal N$ and define
\[
g(t)=I(tu), \qquad t>0.
\]
By $(f_5)$,
\[
F(tu)=F(tu^+),
\qquad
f(tu)u=f(tu^+)u^+
\quad\text{a.e. in }\R^2,
\]
hence
\[
g'(t)
=
t\|u\|^2-\mathcal J(t),
\]
where
\[
\mathcal J(t)
=
\int_{\R^2}\left(\frac{1}{|x|^\mu}*F(tu^+)\right)f(tu^+)u^+\,dx.
\]
Since $u\in\mathcal N$,
\[
\mathcal J(1)=\|u\|^2.
\]

We next justify the inequalities following from $(f_3)$. Let $a>0$ and set
\[
g(r)=\frac{f(r)}{r},\qquad r>0.
\]
By $(f_3)$, $g$ is strictly increasing on $(0,\infty)$. If $0<t\le1$ and $0<\tau\le a$, then $t\tau\le\tau$, and hence
\[
\frac{f(t\tau)}{t\tau}=g(t\tau)\le g(\tau)=\frac{f(\tau)}{\tau},
\]
that is,
\[
f(t\tau)\le t f(\tau).
\]
Consequently,
\[
F(ta)=t\int_0^a f(t\tau)\,d\tau
\le t^2\int_0^a f(\tau)\,d\tau
=t^2F(a).
\]
Taking $\tau=a$ also gives $f(ta)\le tf(a)$. Similarly, if $t\ge1$, then $t\tau\ge\tau$ for $\tau>0$, and the monotonicity of $g$ gives
\[
f(t\tau)\ge t f(\tau),
\]
so that
\[
F(ta)=t\int_0^a f(t\tau)\,d\tau
\ge t^2\int_0^a f(\tau)\,d\tau
=t^2F(a),
\]
and $f(ta)\ge tf(a)$. Therefore, for every $a>0$,
\[
f(ta)\le tf(a), \quad F(ta)\le t^2F(a)
\qquad\text{for }0<t\le1,
\]
and
\[
f(ta)\ge tf(a), \quad F(ta)\ge t^2F(a)
\qquad\text{for }t\ge1.
\]
Therefore,
\[
\mathcal J(t)\le t^3\mathcal J(1)=t^3\|u\|^2
\qquad\text{for }0<t<1,
\]
and
\[
\mathcal J(t)\ge t^3\mathcal J(1)=t^3\|u\|^2
\qquad\text{for }t>1.
\]
Substituting into the formula for $g'(t)$, we get
\[
g'(t)\ge t(1-t^2)\|u\|^2>0
\qquad\text{for }0<t<1,
\]
and
\[
g'(t)\le t(1-t^2)\|u\|^2<0
\qquad\text{for }t>1.
\]
Thus $t=1$ is the unique global maximizer of $g$, and consequently
\[
I(tu)<I(u)
\qquad\text{for every }t>0,\ t\ne1.
\]
\end{proof}

\begin{Lem}\label{Lemma3.2}
For each $u \in E$ with $u\ge 0$ and $u\not\equiv 0$, there exists a unique $t_u>0$ such that
\[
t_u u \in \mathcal{N}.
\]
Moreover,
\[
I(t_u u)=\max_{t>0} I(tu),
\]
and
\[
I(u)>0
\qquad\text{for every }u\in\mathcal N.
\]
\end{Lem}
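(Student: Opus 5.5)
Fix $u\in E$ with $u\ge0$, $u\not\equiv0$, and set $g(t)=I(tu)$ for $t>0$, so that
\[
g'(t)=t\|u\|^2-\mathcal J(t),\qquad \mathcal J(t)=\int_{\R^2}\Bigl(\frac{1}{|x|^\mu}*F(tu)\Bigr)f(tu)u\,dx .
\]
Since $g'(t)=0$ if and only if $tu\in\mathcal N$, the plan is to prove that $g'(t)/t$ is positive for small $t$, negative for large $t$, and changes sign exactly once. This gives a unique zero $t_u$, and it is a global maximizer of $g$ on $(0,\infty)$. The last claim should then follow from $(f_4)$.

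\emph{Small $t$.} Use \eqref{eq2.1}--\eqref{eq2.2} together with Lemma~\ref{Lemma2.2} and the Trudinger--Moser bound of Lemma~\ref{Lemma2.3}, applied with $\|tu\|\le1$ for $t$ small. This yields
\[
\mathcal J(t)\le C\|F(tu)\|_{p_\mu}\|f(tu)u\|_{p_\mu}\le C t^{3-\mu}\bigl(\varepsilon+t^{q-2+\mu/2}\bigr)^2\|u\|^{\cdots}.
\]
The exponents need care: with $|F(t)|\le\varepsilon|t|^{(4-\mu)/2}+\dots$, the term $\|F(tu)\|_{p_\mu}$ scales like $t^{(4-\mu)/2}\|u\|_2^{(4-\mu)/2}$, and similarly for $f(tu)u$. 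The product therefore scales like $t^{4-\mu}$, and $\mathcal J(t)/t\sim t^{2-\mu}\cdot\varepsilon^2+o(\cdot)$. Since $\mu<2$, this exponent is positive, so $\mathcal J(t)=o(t)$ and $g'(t)>0$ for small $t$.

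\emph{Large $t$.} From $(f_4)$, $F(\tau)\ge F(1)\tau^\theta$ for $\tau\ge1$. We need $\theta>1$ together with $F>0$ on $(0,\infty)$, which holds by $(f_3)$: $f(t)/t$ is strictly increasing and tends to $0$, so $f>0$ for $t>0$. Choose a set $A$ of positive measure on which $u\ge\delta>0$. Then
\[
\int\Bigl(\frac{1}{|x|^\mu}*F(tu)\Bigr)F(tu)\ge c\,t^{2\theta},
\]
since the kernel is positive and $F\ge0$. This gives $g(t)\le \frac{t^2}{2}\|u\|^2-ct^{2\theta}\to-\infty$, because $2\theta>2$. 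Hence $g$ attains a positive maximum at some $t_u>0$, where $g'(t_u)=0$ and $t_uu\in\mathcal N$.

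\emph{Uniqueness.} This is the main point. I will use the pointwise inequalities from the proof of Lemma~\ref{Lemma3.1}: for $a>0$, the maps $t\mapsto F(ta)/t^2$ and $t\mapsto f(ta)/t$ are nondecreasing, and strictly so for $f$ by strict monotonicity. Consequently
\[
\frac{\mathcal J(t)}{t^3}=\int\Bigl(\frac{1}{|x|^\mu}*\frac{F(tu)}{t^2}\Bigr)\frac{f(tu)u}{t}\,dx
\]
is strictly increasing on $\{u>0\}$, which has positive measure. Strictness comes from the $f$ factor, since the convolution factor is strictly positive there because $F>0$ on $(0,\infty)$. Thus $g'(t)/t^3=\|u\|^2/t^2-\mathcal J(t)/t^3$ is strictly decreasing, so it has at most one zero, and $t_u$ is unique. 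This also shows $g$ increases before $t_u$ and decreases after it, so $I(t_uu)=\max_{t>0}I(tu)$.

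\emph{Positivity of $I$ on $\mathcal N$.} For $u\in\mathcal N$, $(f_4)$ gives $F(u)\le f(u)u/\theta$ with $F\ge0$. Hence
\[
\int\Bigl(\frac{1}{|x|^\mu}*F(u)\Bigr)F(u)\le\frac1\theta\int\Bigl(\frac{1}{|x|^\mu}*F(u)\Bigr)f(u)u=\frac{\|u\|^2}{\theta},
\]
so $I(u)\ge\frac12(1-\frac1\theta)\|u\|^2>0$ because $u\neq0$. Alternatively, this follows from Lemma~\ref{Lemma3.1} together with the small-$t$ positivity of $g$. The step I expect to need the most care is the exponent bookkeeping in the small-$t$ estimate, where $\mu<2$ is essential.
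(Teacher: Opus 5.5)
Your proof is correct and has the same structure as the paper's: the fiber map $g(t)=I(tu)$ is positive near $0$ and tends to $-\infty$, so it has an interior maximum; uniqueness comes from the $(f_3)$ scaling; and positivity on $\mathcal N$ comes from $(f_4)$. The differences are only in execution — you get uniqueness by showing $g'(t)/t^3$ is strictly decreasing instead of applying Lemma~\ref{Lemma3.1} at two putative Nehari points, and you get $g\to-\infty$ from $F(\tau)\ge F(1)\tau^\theta$ on a set where $u\ge\delta$ instead of integrating $A'(t)\ge 2\theta A(t)/t$. One harmless slip: $\|f(tu)u\|_{p_\mu}$ scales like $t^{(2-\mu)/2}$, so the product is $t^{3-\mu}$ as in your display, not $t^{4-\mu}$, which still gives $\mathcal J(t)/t\lesssim t^{2-\mu}\to0$.
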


\begin{proof}
Fix $u\in E$ with $u\ge0$ and $u\not\equiv0$, and set
\[
g(t)=I(tu), \qquad t>0.
\]
Then
\[
g'(t)=t\|u\|^2-\int_{\R^2}\left(\frac{1}{|x|^\mu}*F(tu)\right)f(tu)u\,dx,
\]
so that
\[
tu\in\mathcal N
\quad\Longleftrightarrow\quad
g'(t)=0.
\]

We first show that $g(t)>0$ for $t>0$ small. By \eqref{eq2.2}, Lemma~\ref{Lemma2.2}, Proposition~\ref{pro2.1}, and the continuous embeddings of $E$, for every $q>2$ there holds
\[
\int_{\R^2}\left(\frac{1}{|x|^\mu}*F(tu)\right)F(tu)\,dx
\le C\bigl(t^{4-\mu}+t^{2q}\bigr)
\]
for $t>0$ small. Hence
\[
g(t)\ge \frac{t^2}{2}\|u\|^2-C\bigl(t^{4-\mu}+t^{2q}\bigr)>0
\]
for all sufficiently small $t>0$.

Next we show that $g(t)\to-\infty$ as $t\to+\infty$. Set
\[
A(t)=\int_{\R^2}\left(\frac{1}{|x|^\mu}*F(tu)\right)F(tu)\,dx.
\]
Since $u\ge0$ and $u\not\equiv0$, one has $A(t_*)>0$ for some $t_*>0$. By $(f_4)$,
\[
A'(t)
=2\int_{\R^2}\left(\frac{1}{|x|^\mu}*F(tu)\right)f(tu)u\,dx
\ge \frac{2\theta}{t}A(t)
\]
for $t\ge t_*$. Integrating this differential inequality yields
\[
A(t)\ge A(t_*)\left(\frac{t}{t_*}\right)^{2\theta}
\qquad\text{for all }t\ge t_*.
\]
Therefore
\[
g(t)\le \frac{t^2}{2}\|u\|^2-\frac12 A(t_*)\left(\frac{t}{t_*}\right)^{2\theta}\to-\infty
\qquad\text{as }t\to+\infty,
\]
since $\theta>1$.

Thus $g$ attains its maximum at some $t_u>0$, and necessarily
\[
g'(t_u)=0,
\]
that is,
\[
t_uu\in\mathcal N.
\]

To prove uniqueness, let $t_1u,t_2u\in\mathcal N$. If $t_1\ne t_2$, then Lemma~\ref{Lemma3.1} gives
\[
I(t_2u)=I\Bigl(\frac{t_2}{t_1}(t_1u)\Bigr)<I(t_1u)
\]
and
\[
I(t_1u)=I\Bigl(\frac{t_1}{t_2}(t_2u)\Bigr)<I(t_2u),
\]
a contradiction. Hence $t_u$ is unique. The identity
\[
I(t_uu)=\max_{t>0}I(tu)
\]
follows from the construction.

Finally, if $u\in\mathcal N$, then
\[
\|u\|^2=\int_{\R^2}\left(\frac{1}{|x|^\mu}*F(u)\right)f(u)u\,dx.
\]
Using $(f_4)$, we obtain
\[
\|u\|^2\ge \theta\int_{\R^2}\left(\frac{1}{|x|^\mu}*F(u)\right)F(u)\,dx,
\]
and therefore
\[
I(u)\ge \left(\frac12-\frac{1}{2\theta}\right)\|u\|^2>0.
\]
\end{proof}

\begin{Lem}\label{Lemma3.3}
There exists a constant $C>0$ such that
\[
\|u\|^{2}\ge C
\qquad\text{for every }u\in\mathcal{N}.
\]
\end{Lem}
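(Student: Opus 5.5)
We argue by contradiction, using the Nehari identity together with the small-amplitude estimates for $f$ and $F$. Suppose there is a sequence $(u_n)\subset\mathcal N$ with $\|u_n\|\to0$; in particular $\|u_n\|\le 1$ for $n$ large. Since $u_n\in\mathcal N$,
\[
\|u_n\|^2=\int_{\R^2}\Bigl(\frac{1}{|x|^\mu}*F(u_n)\Bigr)f(u_n)u_n\,dx
\le C_\mu\,\|F(u_n)\|_{p_\mu}\,\|f(u_n)u_n\|_{p_\mu},
\]
by Lemma~\ref{Lemma2.2} with $t=r=p_\mu=\frac{4}{4-\mu}$. The goal is to show that each factor on the right is $O(\|u_n\|^{\frac{4-\mu}{2}})$. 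Their product is then $O(\|u_n\|^{4-\mu})$, and since $4-\mu>2$ this contradicts the identity above for small $\|u_n\|$.

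To bound the factors, we fix $q>2$ and choose $\sigma>1$ and $\alpha>0$ with $\alpha p_\mu\sigma C_E^2<4\pi$, exactly as in the proof of Lemma~\ref{Lemma2.3} with $R=1$. This gives $\sup_{\|u\|\le1}\|e^{\alpha u^2}-1\|_{L^{p_\mu\sigma}}\le C$. From \eqref{eq2.1}--\eqref{eq2.2}, with $\varepsilon=1$, the function $|F(t)|+|f(t)t|$ is bounded by $C(|t|^{\frac{4-\mu}{2}}+|t|^q(e^{\alpha t^2}-1))$. Hölder's inequality then yields
\[
\|F(u_n)\|_{p_\mu}+\|f(u_n)u_n\|_{p_\mu}
\le C\Bigl(\|u_n\|_2^{\frac{4-\mu}{2}}+\|u_n\|_{qp_\mu\sigma'}^{q}\Bigr).
\]
Here we use that $\frac{4-\mu}{2}p_\mu=2$. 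The continuous embedding $E\hookrightarrow H^1(\R^2)\hookrightarrow L^p(\R^2)$ for $p\ge2$ turns the right-hand side into $C(\|u_n\|^{\frac{4-\mu}{2}}+\|u_n\|^q)\le C\|u_n\|^{\frac{4-\mu}{2}}$ when $\|u_n\|\le1$, because $q>2>\frac{4-\mu}{2}$.

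Combining, $\|u_n\|^2\le C\|u_n\|^{4-\mu}$, i.e. $1\le C\|u_n\|^{2-\mu}\to0$, which is the desired contradiction. Equivalently, the same chain of estimates gives a direct lower bound on $\|u\|$ for all $u\in\mathcal N$ with $\|u\|\le1$, and the constant $C=\min\{1,\cdot\}$ follows.

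The only delicate point is getting Trudinger--Moser control uniformly in $n$. This is why we restrict to $\|u_n\|\le1$ and pick $\alpha$ small, as in Lemma~\ref{Lemma2.3}. We must also keep the exponents consistent, so that the polynomial part of $F$ lands exactly in $L^{p_\mu}$ through $\|u\|_2$. Everything else is routine.
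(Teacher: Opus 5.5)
Your proposal is correct and follows essentially the same route as the paper. Both argue by contradiction from the Nehari identity, apply Lemma~\ref{Lemma2.2} with $p_\mu=\frac{4}{4-\mu}$, use the growth bounds \eqref{eq2.1}--\eqref{eq2.2} together with H\"older and a Trudinger--Moser bound, and conclude from $4-\mu>2$. The only difference is cosmetic: you get uniform Trudinger--Moser control on $\{\|u\|\le1\}$ as in Lemma~\ref{Lemma2.3} and absorb the $\|u_n\|^q$ term. The paper instead invokes $u_n\to0$ in $H^1(\R^2)$ for $n$ large and keeps both powers, $\|u_n\|^{4-\mu}+\|u_n\|^{2q}$.
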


\begin{proof}
Assume by contradiction that there exists a sequence $\{u_n\}\subset\mathcal N$ such that
\[
\|u_n\|\to 0.
\]
By the continuous embedding $E\hookrightarrow H^1(\R^2)$,
\[
u_n\to0 \quad\text{in }H^1(\R^2).
\]
In particular,
\[
u_n\to0 \quad\text{in }L^p(\R^2)
\qquad\text{for every }2\le p<+\infty.
\]

Since $u_n\in\mathcal N$,
\[
\|u_n\|^2
=
\int_{\R^2}\left(\frac{1}{|x|^\mu}*F(u_n)\right)f(u_n)u_n\,dx.
\]
Let
\[
r=\frac{4}{4-\mu}.
\]
By Lemma~\ref{Lemma2.2},
\[
\|u_n\|^2
\le
C_\mu \|F(u_n)\|_{L^r(\R^2)}\|f(u_n)u_n\|_{L^r(\R^2)}.
\]

Fix $\alpha>0$ and $q>2$. By \eqref{eq2.1} and \eqref{eq2.2},
\[
|F(t)|+|f(t)t|
\le
C\Bigl(|t|^{\frac{4-\mu}{2}}+|t|^q(e^{\alpha t^2}-1)\Bigr)
\qquad\text{for all }t\in\R.
\]
Hence
\[
\|F(u_n)\|_{L^r}+\|f(u_n)u_n\|_{L^r}
\le
C\Bigl(
\|u_n\|_2^{\frac{4-\mu}{2}}
+\||u_n|^q(e^{\alpha u_n^2}-1)\|_{L^r}
\Bigr).
\]
Since
\[
r\frac{4-\mu}{2}=2,
\]
the first term is bounded by
\[
C\|u_n\|^{\frac{4-\mu}{2}}.
\]

For the second term, choose $\rho>1$ and let $\rho'$ be its conjugate exponent. By Hölder's inequality,
\[
\||u_n|^q(e^{\alpha u_n^2}-1)\|_{L^r}^r
\le
\left(\int_{\R^2}(e^{\alpha r\rho\,u_n^2}-1)\,dx\right)^{\frac1\rho}
\left(\int_{\R^2}|u_n|^{qr\rho'}\,dx\right)^{\frac1{\rho'}}.
\]
Since $u_n\to0$ in $H^1(\R^2)$, for $n$ large enough one has
\[
\alpha r\rho\,\|u_n\|_{H^1(\R^2)}^2<4\pi,
\]
and Proposition~\ref{pro2.1} yields
\[
\int_{\R^2}(e^{\alpha r\rho\,u_n^2}-1)\,dx\le C.
\]
Therefore
\[
\||u_n|^q(e^{\alpha u_n^2}-1)\|_{L^r}
\le
C\|u_n\|_{L^{qr\rho'}(\R^2)}^q
\le
C\|u_n\|^q.
\]

Consequently,
\[
\|F(u_n)\|_{L^r}+\|f(u_n)u_n\|_{L^r}
\le
C\Bigl(\|u_n\|^{\frac{4-\mu}{2}}+\|u_n\|^q\Bigr),
\]
and thus
\[
\|u_n\|^2
\le
C\Bigl(\|u_n\|^{4-\mu}+\|u_n\|^{2q}\Bigr).
\]
Dividing by $\|u_n\|^2$ and letting $n\to\infty$, we obtain a contradiction, since $4-\mu>2$ and $2q>2$.

Hence
\[
\inf_{u\in\mathcal N}\|u\|>0.
\]
\end{proof}

\begin{Lem}\label{Lemma3.4}
If $\left(u_n\right)\subset\mathcal{N}$ is a minimizing sequence for $c$, then $\left(u_n\right)$ is bounded in $E$.
\end{Lem}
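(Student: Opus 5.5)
The boundedness will follow directly from the Ambrosetti--Rabinowitz type condition $(f_4)$, exactly as in the last step of the proof of Lemma~\ref{Lemma3.2}. Let $(u_n)\subset\mathcal N$ with $I(u_n)\to c$. First, $c$ is finite: $\mathcal N\neq\emptyset$ by Lemma~\ref{Lemma3.2} applied to any nonnegative nonzero $u\in E$ (for instance $u^+$ of any element with nontrivial positive part, or a nonnegative function in $C_c^\infty(\R^2)\subset E$). Moreover $c\ge0$, since $I>0$ on $\mathcal N$. Hence $I(u_n)\le c+1$ for all $n$ large.

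Next, for each $n$ we use the Nehari identity
\[
\|u_n\|^2=\int_{\R^2}\left(\frac{1}{|x|^\mu}*F(u_n)\right)f(u_n)u_n\,dx .
\]
By $(f_4)$ we have $f(t)t\ge\theta F(t)\ge0$, so in particular $F\ge0$. The Riesz kernel is positive, so $\frac{1}{|x|^\mu}*F(u_n)\ge0$. Multiplying the pointwise inequality $f(u_n)u_n\ge\theta F(u_n)$ by this nonnegative weight and integrating gives
\[
\|u_n\|^2\ge\theta\int_{\R^2}\left(\frac{1}{|x|^\mu}*F(u_n)\right)F(u_n)\,dx .
\]
Therefore
\[
c+1\ge I(u_n)=\frac12\|u_n\|^2-\frac12\int_{\R^2}\left(\frac{1}{|x|^\mu}*F(u_n)\right)F(u_n)\,dx\ge\frac{\theta-1}{2\theta}\|u_n\|^2 .
\]
Since $\theta>1$, this yields $\|u_n\|^2\le\frac{2\theta(c+1)}{\theta-1}$ for all large $n$, and the finitely many remaining terms are trivially bounded. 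Hence $(u_n)$ is bounded in $E$.

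There is no real obstacle here. The only points to check are the sign of the convolution weight and the finiteness of $c$, which guarantees that the minimizing sequence has bounded energy; both are immediate from $(f_4)$ and Lemma~\ref{Lemma3.2}. All integrals involved are finite by Lemmas~\ref{Lemma2.2} and \ref{Lemma2.3}.
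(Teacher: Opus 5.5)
Your proof is correct and is essentially the paper's argument. The paper bounds $I(u_n)-\frac{1}{2\theta}I'(u_n)[u_n]$ from below by $\left(\frac12-\frac{1}{2\theta}\right)\|u_n\|^2$ using $(f_4)$ and the Nehari identity, which is your inequality written in one line; your check that $c<+\infty$, so that $I(u_n)$ is bounded above, is a harmless detail the paper leaves implicit.
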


\begin{proof}
Since $u_n\in\mathcal{N}$ and $I(u_n)\to c$, we have
\[
c+o_n(1)=I(u_n).
\]
Using $(f_4)$ and the identity $I'(u_n)[u_n]=0$, we compute
\[
\begin{aligned}
I(u_n)-\frac{1}{2\theta}I'(u_n)[u_n]
&=\left(\frac12-\frac{1}{2\theta}\right)\|u_n\|^2 \\
&\quad
+\int_{\R^2}\left(\frac{1}{|x|^\mu}*F(u_n)\right)
\left(\frac{1}{2\theta}f(u_n)u_n-\frac12F(u_n)\right)\,dx \\
&\ge \left(\frac12-\frac{1}{2\theta}\right)\|u_n\|^2.
\end{aligned}
\]
Hence
\[
c+o_n(1)\ge \left(\frac12-\frac{1}{2\theta}\right)\|u_n\|^2,
\]
which shows that $\{u_n\}$ is bounded in $E$.
\end{proof}

\begin{Lem}\label{Lemma3.5}
Let $\left(u_n\right)\subset \mathcal{N}$ be a minimizing sequence for $c$.
Then there exist a subsequence, still denoted by $\left(u_n\right)$, and a function $u\in E$ such that
\[
u_n\rightharpoonup u \quad\text{in }E,
\qquad
u_n\to u \quad\text{in }L^{p}\left(\R^{2}\right)\ \text{for every }2\le p<+\infty,
\qquad
u_n(x)\to u(x)\ \text{a.e. in }\R^{2},
\]
and
\[
\int_{\R^{2}} \left(\frac{1}{|x|^{\mu}}*F(u_{n})\right)f\left(u_n\right) u_n\,dx
\to \int_{\R^{2}} \left(\frac{1}{|x|^{\mu}}*F(u)\right) f(u)\,u\,dx,
\]
\[
\int_{\R^{2}} \left(\frac{1}{|x|^{\mu}}*F(u_{n})\right)F\left(u_n\right)\,dx
\to \int_{\R^{2}}\left(\frac{1}{|x|^{\mu}}*F(u)\right) F(u)\,dx.
\]
\end{Lem}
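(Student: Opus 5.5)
By Lemma~\ref{Lemma3.4} the sequence $(u_n)$ is bounded in $E$; set $R=\sup_n\|u_n\|<\infty$. Since $E$ is a Hilbert space, a subsequence converges weakly to some $u\in E$. By Remark~\ref{Rek2.1} the embedding $E\hookrightarrow L^p(\R^2)$ is compact for every $2\le p<\infty$, so after a diagonal extraction over $p\in\{2,3,4,\dots\}$ and interpolation we get $u_n\to u$ in $L^p$ for all $2\le p<\infty$. A further subsequence converges a.e. This settles the first three assertions.

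For the nonlocal terms, I will use the Hardy--Littlewood--Sobolev inequality (Lemma~\ref{Lemma2.2}) with $t=r=p_\mu=\frac{4}{4-\mu}$. The splitting
\[
\int\!\Bigl(\tfrac{1}{|x|^\mu}*F(u_n)\Bigr)f(u_n)u_n-\int\!\Bigl(\tfrac{1}{|x|^\mu}*F(u)\Bigr)f(u)u
=\int\!\Bigl(\tfrac{1}{|x|^\mu}*(F(u_n)-F(u))\Bigr)f(u_n)u_n+\int\!\Bigl(\tfrac{1}{|x|^\mu}*F(u)\Bigr)\bigl(f(u_n)u_n-f(u)u\bigr)
\]
is bounded by $C_\mu\|F(u_n)-F(u)\|_{p_\mu}\,\|f(u_n)u_n\|_{p_\mu}+C_\mu\|F(u)\|_{p_\mu}\,\|f(u_n)u_n-f(u)u\|_{p_\mu}$. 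By Lemma~\ref{Lemma2.3}, $\|f(u_n)u_n\|_{p_\mu}$ and $\|F(u)\|_{p_\mu}$ are bounded by $C_R$. It therefore suffices to prove
\[
F(u_n)\to F(u)\quad\text{and}\quad f(u_n)u_n\to f(u)u \qquad\text{in } L^{p_\mu}(\R^2).
\]
The term with $F(u_n)F(u_n)$ is handled by the same splitting.

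To get these $L^{p_\mu}$ convergences I will use the generalized (Vitali-type) dominated convergence theorem. Pointwise a.e. convergence follows from the continuity of $f$ and $F$. For domination, take the growth bound from the proof of Lemma~\ref{Lemma2.3}:
\[
|F(t)|+|f(t)t|\le C\bigl(|t|^{\frac{4-\mu}{2}}+|t|^q(e^{\alpha t^2}-1)\bigr),
\]
with $\alpha$ small enough that $\alpha p_\mu\sigma C_E^2R^2<4\pi$. The first piece gives $|u_n|^{\frac{4-\mu}{2}p_\mu}=|u_n|^2$, which converges in $L^1$ because $u_n\to u$ in $L^2$; Vitali's theorem then gives uniform integrability and tightness for this part. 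For the exponential piece, Hölder's inequality gives
\[
\bigl\||u_n|^q(e^{\alpha u_n^2}-1)\bigr\|_{L^{p_\mu}(A)}\le \|e^{\alpha u_n^2}-1\|_{L^{p_\mu\sigma}}\,\|u_n\|_{L^{qp_\mu\sigma'}(A)}^q
\]
on any measurable set $A$. The first factor is uniformly bounded by Proposition~\ref{pro2.1}, as in Lemma~\ref{Lemma2.3}. The second factor is small uniformly in $n$ whenever $|A|$ is small or $A$ lies outside a large ball, because $u_n$ converges strongly in $L^{qp_\mu\sigma'}$ and so $|u_n|^{qp_\mu\sigma'}$ is uniformly integrable and tight. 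Hence $|F(u_n)-F(u)|^{p_\mu}$ is uniformly integrable and tight, and Vitali's theorem gives convergence in $L^{p_\mu}$. The same argument applies to $f(u_n)u_n$.

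The main obstacle is this uniform integrability of the exponential term. The key point is that boundedness in $E$, combined with a small enough $\alpha$ (allowed because $(f_1)$ holds for every $\alpha>0$), keeps the Trudinger--Moser exponent below $4\pi$ uniformly in $n$. Once this bound is in place, the rest is routine.
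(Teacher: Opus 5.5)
Your proof is correct and follows the same plan as the paper. You use the Hardy--Littlewood--Sobolev inequality and the two-term splitting to reduce to $F(u_n)\to F(u)$ and $f(u_n)u_n\to f(u)u$ in $L^{p_\mu}(\R^2)$. You then get these from Vitali's theorem, with $\alpha$ small relative to $\sup_n\|u_n\|$ so the Trudinger--Moser exponent stays below $4\pi$.

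The difference is in how Vitali's hypotheses are verified.

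\textbf{The paper's route.} It proves a uniform bound for $F(u_n)$ and $f(u_n)u_n$ in $L^{p_\mu\sigma}(\R^2)$. From this it deduces $\int_A|F(u_n)|^{p_\mu}\,dx\le C|A|^{1/\sigma'}$ and pairs it with convergence in measure. That estimate controls small sets but not mass escaping to infinity, and on $\R^2$ it is not enough by itself. For instance, $g_n=n^{-1/p_\mu}\chi_{B(0,\sqrt{n})}$ satisfies the same small-set bound (its $L^{p_\mu\sigma}$ norm stays bounded) and tends to $0$ uniformly, yet $\|g_n\|_{p_\mu}^{p_\mu}=\pi$ for all $n$.

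\textbf{Your route.} You split the growth bound into $|u_n|^2$ plus a H\"older-factored exponential term. In the exponential term the factor $\|u_n\|_{L^{qp_\mu\sigma'}(A)}$ is controlled by the strong $L^p$ convergence, and $|u_n|^2$ converges in $L^1$. Together these give equi-integrability and tightness at once. So your argument supplies the tightness step that Vitali requires on an infinite-measure space and that the paper's argument leaves implicit. The paper's version is shorter; yours is complete.

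A minor point: no diagonal extraction is needed for the $L^p$ convergence. A compact embedding sends the weakly convergent subsequence to a strongly convergent one in every $L^p$, $2\le p<\infty$.
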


\begin{proof}
By Lemma~\ref{Lemma3.4}, the sequence $\{u_n\}$ is bounded in $E$. Passing to a subsequence, we may assume that
\[
u_n\rightharpoonup u \quad\text{in }E.
\]
By Remark~\ref{Rek2.1}, we also have
\[
u_n\to u \quad\text{in }L^p(\R^2)\ \text{for every }2\le p<+\infty,
\qquad
u_n(x)\to u(x)\quad\text{a.e. in }\R^2.
\]

Set
\[
p_\mu=\frac{4}{4-\mu}.
\]
Since $F$ and $t\mapsto f(t)t$ are continuous,
\[
F(u_n(x))\to F(u(x)),
\qquad
f(u_n(x))u_n(x)\to f(u(x))u(x)
\quad\text{a.e. in }\R^2.
\]

We prove that
\[
F(u_n)\to F(u)\quad\text{in }L^{p_\mu}(\R^2),
\qquad
f(u_n)u_n\to f(u)u\quad\text{in }L^{p_\mu}(\R^2).
\]
Since $\{u_n\}$ is bounded in $E$, there exists $M>0$ such that
\[
\|u_n\|\le M
\qquad\text{for all }n.
\]
By the continuous embedding $E\hookrightarrow H^1(\R^2)$, there exists $C_E>0$ such that
\[
\|v\|_{H^1(\R^2)}\le C_E\|v\|
\qquad\text{for all }v\in E.
\]
Choose $\sigma>1$ and then choose $\alpha>0$ so small that
\[
\alpha p_\mu \sigma C_E^2 M^2<4\pi.
\]
Since $(f_1)$ and $(f_2)$ hold, for some $q>2$ there exists a constant $C>0$ such that
\[
|F(t)|+|f(t)t|
\le
C\Bigl(|t|^{\frac{4-\mu}{2}}+|t|^q\bigl(e^{\alpha t^2}-1\bigr)\Bigr)
\qquad\text{for all }t\in\R.
\]

We claim that
\[
\sup_n \|F(u_n)\|_{L^{p_\mu\sigma}(\R^2)}<\infty,
\qquad
\sup_n \|f(u_n)u_n\|_{L^{p_\mu\sigma}(\R^2)}<\infty.
\]
To prove this, choose $\rho>1$ so close to $1$ that
\[
\alpha p_\mu \sigma \rho\, C_E^2 M^2<4\pi.
\]
Let $\rho'$ be the conjugate exponent of $\rho$. By Proposition~\ref{pro2.1}, the boundedness of $\{u_n\}$ in $E$, and the Sobolev embeddings of $E$ into $L^p(\R^2)$ for all finite $p$, we have
\[
\sup_n \int_{\R^2}\bigl(e^{\alpha p_\mu \sigma \rho\, u_n^2}-1\bigr)\,dx<\infty,
\]
and hence
\[
\sup_n \|e^{\alpha u_n^2}-1\|_{L^{p_\mu\sigma\rho}(\R^2)}<\infty.
\]
Therefore
\[
\|F(u_n)\|_{L^{p_\mu\sigma}}
+
\|f(u_n)u_n\|_{L^{p_\mu\sigma}}
\le C
\]
uniformly in $n$.

We now prove uniform integrability explicitly. Let $\sigma'$ be the conjugate exponent of $\sigma$. For every measurable set $A\subset\R^2$, H\"older's inequality and the preceding uniform bounds give
\[
\int_A |F(u_n)|^{p_\mu}\,dx
\le
\left(\int_A |F(u_n)|^{p_\mu\sigma}\,dx\right)^{1/\sigma}|A|^{1/\sigma'}
\le C |A|^{1/\sigma'},
\]
and similarly
\[
\int_A |f(u_n)u_n|^{p_\mu}\,dx
\le C |A|^{1/\sigma'}.
\]
Hence the families
\[
\{|F(u_n)|^{p_\mu}\}_{n\ge1},
\qquad
\{|f(u_n)u_n|^{p_\mu}\}_{n\ge1}
\]
are uniformly integrable in $L^1(\R^2)$.

It remains to check convergence in measure on the infinite-measure space $\R^2$. Since $u_n\to u$ in $L^2(\R^2)$, we have $u_n\to u$ in measure on $\R^2$. The functions $F$ and $t\mapsto f(t)t$ are continuous, and therefore
\[
F(u_n)\to F(u),
\qquad
f(u_n)u_n\to f(u)u
\quad\text{in measure on }\R^2.
\]
Vitali's theorem then yields
\[
F(u_n)\to F(u)\quad\text{in }L^{p_\mu}(\R^2),
\qquad
f(u_n)u_n\to f(u)u\quad\text{in }L^{p_\mu}(\R^2).
\]

Finally, Lemma~\ref{Lemma2.2} implies
\[
\int_{\R^{2}} \left(\frac{1}{|x|^{\mu}}*F(u_{n})\right)f\left(u_n\right) u_n\,dx
\to \int_{\R^{2}} \left(\frac{1}{|x|^{\mu}}*F(u)\right) f(u)\,u\,dx,
\]
and
\[
\int_{\R^{2}} \left(\frac{1}{|x|^{\mu}}*F(u_{n})\right)F\left(u_n\right)\,dx
\to \int_{\R^{2}}\left(\frac{1}{|x|^{\mu}}*F(u)\right) F(u)\,dx.
\]
\end{proof}

\begin{Lem}\label{Lemma3.6}
There exists $u\in\mathcal N$ such that
\[
I(u)=c.
\]
\end{Lem}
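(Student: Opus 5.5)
Take a minimizing sequence $(u_n)\subset\mathcal N$ with $I(u_n)\to c$ and apply Lemma~\ref{Lemma3.5}. Up to a subsequence, $u_n\rightharpoonup u$ in $E$, $u_n\to u$ in $L^p$, and both nonlocal integrals converge. The plan has three steps: show $u\neq0$, project $u$ onto $\mathcal N$, and compare energies using weak lower semicontinuity of the norm.

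\emph{Nontriviality.} By Lemma~\ref{Lemma3.3},
\[
C\le\|u_n\|^2=\int_{\R^2}\Bigl(\frac{1}{|x|^\mu}*F(u_n)\Bigr)f(u_n)u_n\,dx,
\]
and by Lemma~\ref{Lemma3.5} the right-hand side converges to $\int(\frac{1}{|x|^\mu}*F(u))f(u)u\,dx$. Hence this limit is at least $C>0$, which forces $u\not\equiv0$. In fact $F(u)\neq0$ and $u^+\not\equiv0$, since by $(f_5)$ these integrals only see $u^+$.

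\emph{Projection onto $\mathcal N$.} Lemma~\ref{Lemma3.2} is stated for $u\ge0$, but its proof only uses that $A(t)>0$ for some $t$, which holds because $u^+\not\equiv0$. So there is $t_u>0$ with $t_uu\in\mathcal N$. By weak lower semicontinuity and the convergence of the nonlocal term,
\[
I'(u)[u]\le\liminf_n I'(u_n)[u_n]=0,
\quad\text{that is,}\quad
\|u\|^2\le\int\Bigl(\frac{1}{|x|^\mu}*F(u)\Bigr)f(u)u\,dx .
\]
The monotonicity argument from Lemma~\ref{Lemma3.1} gives that $t\mapsto t^{-1}\mathcal J(t)$ is nondecreasing, where $\mathcal J(t)=\int(\frac1{|x|^\mu}*F(tu^+))f(tu^+)u^+$, and in fact strictly increasing because $(f_3)$ is strict on the support of $u^+$. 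Together with $\|u\|^2\le \mathcal J(1)$, this yields $t_u\le1$.

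\emph{Energy comparison.} Use the quantity
\[
I(v)-\tfrac12 I'(v)[v]=\tfrac12\int\Bigl(\frac1{|x|^\mu}*F(v)\Bigr)\bigl(f(v)v-F(v)\bigr)dx.
\]
Its integrand $\Phi(t)$ along $tu$ should be nondecreasing in $t$, since $(f_3)$ gives that $f(\tau)\tau-2F(\tau)$ is increasing and nonnegative. Then
\[
c\le I(t_uu)=\Phi(t_u)\le\Phi(1)=\lim_n\bigl(I(u_n)-\tfrac12I'(u_n)[u_n]\bigr)=c,
\]
where the equality with the limit uses Lemma~\ref{Lemma3.5}. Hence $I(t_uu)=c$ with $t_uu\in\mathcal N$.

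\emph{Main obstacle.} The hardest step is justifying the monotonicity of $\Phi$ in the Choquard setting. Both factors $F(tu)$ and $f(tu)tu-F(tu)$ are nonnegative and nondecreasing in $t$. This holds because $F$ is increasing on $(0,\infty)$, and $(f_3)$ implies $f(\tau)\tau-F(\tau)$ is increasing there. So the product integral is monotone. If this route proves delicate, the fallback is to avoid $\Phi$ entirely: use $t_u\le1$ and Lemma~\ref{Lemma3.1}-type bounds to get $I(t_uu)\le\liminf I(t_uu_n)\le\liminf I(u_n)=c$, where the last step holds since $u_n\in\mathcal N$ maximizes $I$ along its ray.
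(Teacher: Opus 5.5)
Your proof is correct. Your fallback is essentially the paper's own argument, and your primary route closes the proof by a genuinely different mechanism.

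\emph{What the paper does.} It shows
$I(t_0u)\le\liminf_n I(t_0u_n)\le\liminf_n I(u_n)=c$, using Lemma~\ref{Lemma3.1} on each fiber, weak lower semicontinuity of the norm, and convergence of the Choquard term along the rescaled sequence $t_0u_n$. That convergence requires rerunning the Vitali argument of Lemma~\ref{Lemma3.5}, because $t_0u_n$ is no longer a minimizing sequence in $\mathcal N$. Before this, the paper replaces $u_n$ by $t_nu_n^+$, using $\|u_n^+\|\le\|u_n\|$ and $(f_5)$, so that the weak limit is nonnegative and Lemma~\ref{Lemma3.2} applies as stated.

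\emph{Your route, first difference.} You skip the replacement by $t_nu_n^+$. Instead you note that the existence part of Lemma~\ref{Lemma3.2} only needs $u^+\not\equiv0$, and you correctly read this off from the limit of the Nehari identity via $(f_5)$.

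\emph{Your route, second difference.} You close the argument with three ingredients:
\begin{itemize}
\item the identity $I(v)-\frac12 I'(v)[v]=\frac12\int(|x|^{-\mu}*F(v))(f(v)v-F(v))\,dx$;
\item the bound $t_u\le 1$, obtained from $I'(u)[u]\le0$ and strict monotonicity of $t^{-1}\mathcal J(t)$; this monotonicity already follows from $\mathcal J(s)\le (s/t)^3\mathcal J(t)$ for $s<t$ together with $\mathcal J>0$;
\item monotonicity of $\Phi$, which holds because $F$ and $\tau\mapsto f(\tau)\tau-F(\tau)$ are nonnegative and increasing on $(0,\infty)$ by $(f_2)$--$(f_4)$, and the kernel is positive.
\end{itemize}
(The relevant factor here is $f(\tau)\tau-F(\tau)$, not $f(\tau)\tau-2F(\tau)$. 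Your remark is still consistent, since the former is the latter plus $F$.)

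\emph{What each approach buys.} Your route uses Lemma~\ref{Lemma3.5} exactly as stated, with no convergence needed along a second sequence. The cost is these extra monotonicity facts, all of which hold under the hypotheses. Your route also gives more than the paper records. For $u^+\not\equiv0$, $\Phi$ is actually strictly increasing: $F$ is strictly increasing and $f(\tau)\tau-F(\tau)\ge(\theta-1)F(\tau)>0$ is strictly increasing on $(0,\infty)$. Hence $\Phi(t_u)=\Phi(1)=c$ forces $t_u=1$, so the weak limit itself lies in $\mathcal N$. Then $\|u_n\|^2\to\|u\|^2$, and the minimizing sequence converges strongly.
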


\begin{proof}
Let $(u_n)\subset\mathcal N$ be a minimizing sequence such that
\[
I(u_n)\to c.
\]
Set
\[
v_n=u_n^+.
\]
By $(f_5)$,
\[
F(u_n)=F(v_n),
\qquad
f(u_n)u_n=f(v_n)v_n
\quad\text{a.e. in }\R^2.
\]
By Lemma~\ref{Lemma3.2}, there exists $t_n>0$ such that
\[
t_nv_n\in\mathcal N.
\]
For every $t>0$, the identities in $(f_5)$ imply that the Choquard terms of $tu_n$ and $tv_n$ coincide, while the truncation property gives
\[
\|v_n\|\le \|u_n\|.
\]
Consequently,
\[
I(tv_n)\le I(tu_n)
\qquad\text{for every }t>0.
\]
Since $u_n\in\mathcal N$, Lemma~\ref{Lemma3.1} gives
\[
I(tu_n)\le I(u_n)
\qquad\text{for every }t>0.
\]
Taking $t=t_n$, we obtain
\[
I(t_nv_n)\le I(t_nu_n)\le I(u_n).
\]
Replacing $u_n$ by $t_nv_n$, we may assume that
\[
u_n\in\mathcal N,
\qquad
u_n\ge0,
\qquad
I(u_n)\to c.
\]

By Lemma~\ref{Lemma3.4}, $\{u_n\}$ is bounded in $E$. Up to a subsequence,
\[
u_n\rightharpoonup u_0 \quad\text{in }E,
\qquad
u_n(x)\to u_0(x)\quad\text{a.e. in }\R^2,
\]
with $u_0\ge0$ a.e. in $\R^2$.
We claim that $u_0\not\equiv0$. Otherwise, Lemma~\ref{Lemma3.5} yields
\[
\int_{\R^2}\left(\frac{1}{|x|^\mu}*F(u_n)\right)f(u_n)u_n\,dx\to0.
\]
Since $u_n\in\mathcal N$, it follows that
\[
\|u_n\|^2
=
\int_{\R^2}\left(\frac{1}{|x|^\mu}*F(u_n)\right)f(u_n)u_n\,dx
\to0,
\]
contradicting Lemma~\ref{Lemma3.3}. Thus $u_0\not\equiv0$.

By Lemma~\ref{Lemma3.2}, there exists a unique $t_0>0$ such that
\[
u=t_0u_0\in\mathcal N.
\]
Since $u_n\in\mathcal N$, Lemma~\ref{Lemma3.1} gives
\[
I(t_0u_n)\le I(u_n)
\qquad\text{for all }n.
\]
Moreover,
\[
t_0u_n\rightharpoonup u \quad\text{in }E,
\qquad
t_0u_n(x)\to u(x)\quad\text{a.e. in }\R^2.
\]
By the same argument as in Lemma~\ref{Lemma3.5}, applied to the bounded sequence $\{t_0u_n\}$,
\[
\int_{\R^2}\left(\frac{1}{|x|^\mu}*F(t_0u_n)\right)F(t_0u_n)\,dx
\to
\int_{\R^2}\left(\frac{1}{|x|^\mu}*F(u)\right)F(u)\,dx.
\]
Therefore, by weak lower semicontinuity of the norm,
\[
\begin{aligned}
c
&\le I(u) \\
&\le \liminf_{n\to\infty} I(t_0u_n)
\le \liminf_{n\to\infty} I(u_n)
= c.
\end{aligned}
\]
Hence
\[
I(u)=c
\qquad\text{and}\qquad
u\in\mathcal N.
\]
\end{proof}

\begin{proof}[Proof of Theorem~\ref{Thm1.1}]
By Lemma~\ref{Lemma3.6}, there exists $u\in\mathcal N$ such that
\[
I(u)=c.
\]
We first prove that $u$ is a critical point of $I$.

Assume by contradiction that $I'(u)\ne0$ in $E'$. Then there exists
$\phi\in E$ such that
\[
I'(u)[\phi]<0.
\]
Replacing $\phi$ by a positive multiple, we may assume that
\[
I'(u)[\phi]\le -2.
\]
By the continuity of $I'$, there exists $\varepsilon\in(0,1)$ such that
\begin{equation}\label{eq3.1}
I'(tu+\sigma\phi)[\phi]\le -1
\quad\text{for all } |t-1|\le \varepsilon,\ |\sigma|\le \varepsilon .
\end{equation}
Taking $\varepsilon>0$ smaller if necessary, we may also assume that
\[
\varepsilon\|\phi\|<(1-\varepsilon)\|u\|.
\]
Choose $\eta\in C^\infty([0,\infty))$ such that
\[
0\le \eta\le1,
\qquad
\eta(t)=1 \text{ for } |t-1|\le \frac{\varepsilon}{2},
\qquad
\eta(t)=0 \text{ for } |t-1|\ge \varepsilon.
\]
Define
\[
h(t)=tu+\varepsilon\eta(t)\phi,
\qquad
\Psi(t)=I(h(t)),
\qquad t>0.
\]
Then
\[
h(t)\ne0
\qquad\text{for all }t\in[1-\varepsilon,1+\varepsilon].
\]
Indeed, for such $t$,
\[
\|h(t)\|
\ge t\|u\|-\varepsilon\eta(t)\|\phi\|
\ge (1-\varepsilon)\|u\|-\varepsilon\|\phi\|>0.
\]

We now prove that
\[
\sup_{t>0}\Psi(t)<I(u).
\]
Since $u\in\mathcal N$, one has $u^+\not\equiv0$. Hence the same argument as in the proof of Lemma~\ref{Lemma3.2} gives
\[
I(tu)\to0 \quad\text{as }t\to0^+,
\qquad
I(tu)\to-\infty \quad\text{as }t\to+\infty.
\]
Since $I(u)=c>0$, there exist $0<a<1-\varepsilon$ and
$b>1+\varepsilon$ such that
\[
I(tu)\le \frac{c}{2}
\qquad\text{for }t\in(0,a]\cup[b,+\infty).
\]
On this set, $h(t)=tu$, and hence
\[
\Psi(t)\le\frac{c}{2}.
\]

It remains to consider the compact interval $[a,b]$. We claim that
\[
\Psi(t)<I(u)
\qquad\text{for all }t\in[a,b].
\]
If $|t-1|\ge\varepsilon$, then $h(t)=tu$. Since $u\in\mathcal N$,
Lemma~\ref{Lemma3.1} gives
\[
\Psi(t)=I(tu)<I(u).
\]
If $|t-1|<\varepsilon$, then by \eqref{eq3.1},
\[
\begin{aligned}
\Psi(t)
&=I(tu+\varepsilon\eta(t)\phi)  \\
&=I(tu)+\varepsilon\eta(t)
\int_0^1
I'(tu+\sigma\varepsilon\eta(t)\phi)[\phi]\\,d\sigma  \\
&\le I(tu)-\varepsilon\eta(t).
\end{aligned}
\]
If $t\ne1$, Lemma~\ref{Lemma3.1} gives $I(tu)<I(u)$, and therefore
\[
\Psi(t)<I(u).
\]
If $t=1$, then $\eta(1)=1$, and hence
\[
\Psi(1)\le I(u)-\varepsilon<I(u).
\]
Thus $\Psi(t)<I(u)$ for every $t\in[a,b]$. Since $\Psi$ is continuous and
$[a,b]$ is compact, we obtain
\[
\sup_{t\in[a,b]}\Psi(t)<I(u).
\]
Combining this estimate with the estimate on $(0,a]\cup[b,+\infty)$, we conclude that
\begin{equation}\label{eq3.2}
\sup_{t>0}\Psi(t)<I(u)=c.
\end{equation}

Define
\[
\Upsilon(t)=I'(h(t))[h(t)]
\qquad\text{for }t\in[1-\varepsilon,1+\varepsilon].
\]
Since
\[
h(1-\varepsilon)=(1-\varepsilon)u,
\qquad
h(1+\varepsilon)=(1+\varepsilon)u,
\]
we determine the signs of $\Upsilon$ at the endpoints from the fiber map.
Let
\[
g(t)=I(tu),\qquad t>0.
\]
By the proof of Lemma~\ref{Lemma3.1},
\[
g'(t)>0\quad\text{for }0<t<1,
\qquad
g'(t)<0\quad\text{for }t>1.
\]
Therefore
\[
\Upsilon(1-\varepsilon)
=
I'((1-\varepsilon)u)[(1-\varepsilon)u]
=
(1-\varepsilon)g'(1-\varepsilon)>0,
\]
and
\[
\Upsilon(1+\varepsilon)
=
I'((1+\varepsilon)u)[(1+\varepsilon)u]
=
(1+\varepsilon)g'(1+\varepsilon)<0.
\]
By the continuity of $\Upsilon$, there exists
$\bar t\in(1-\varepsilon,1+\varepsilon)$ such that
\[
\Upsilon(\bar t)=0.
\]
Since $h(\bar t)\ne0$, this means that
\[
h(\bar t)\in\mathcal N.
\]
Hence, by the definition of $c$ and by \eqref{eq3.2},
\[
c\le I(h(\bar t))
\le \sup_{t>0}\Psi(t)
<c,
\]
a contradiction. Thus
\[
I'(u)=0.
\]

We next prove that $u$ is nonnegative. Recall that
\[
u^-=\min\{u,0\}.
\]
Then $u^-\in E$. Since $I'(u)=0$, we may test the equation by $u^-$. By $(f_5)$,
\[
f(u)u^-=0
\qquad\text{a.e. in }\R^2.
\]
Therefore
\[
\begin{aligned}
0=I'(u)[u^-]
&=
\int_{\R^2}\nabla u\cdot\nabla u^-\,dx
+\frac12
\iint_{\R^2\times\R^2}
\frac{(u(x)-u(y))(u^-(x)-u^-(y))}
{|x-y|^{2+2s}}\,dx\,dy   \\
&\quad
+\int_{\R^2}V(x)u\,u^-\,dx .
\end{aligned}
\]
Since
\[
\nabla u\cdot\nabla u^-=|\nabla u^-|^2,
\qquad
u\,u^-=|u^-|^2,
\]
and
\[
(a-b)(a^- - b^-)\ge |a^- - b^-|^2
\qquad\text{for all }a,b\in\R,
\]
we obtain
\[
0=I'(u)[u^-]\ge \|u^-\|^2.
\]
Hence
\[
u^-=0,
\]
and consequently
\[
u\ge0
\qquad\text{a.e. in }\R^2.
\]

Since $u\in\mathcal N$, one has $u\not\equiv0$. Moreover, by $(f_2)$, $(f_3)$ and $(f_5)$, the function
\[
t\mapsto \frac{f(t)}{t}
\]
is strictly increasing on $(0,\infty)$ and satisfies
\[
\lim_{t\to0^+}\frac{f(t)}{t}=0.
\]
Therefore
\[
f(t)>0
\quad\text{and}\quad
F(t)>0
\qquad\text{for all }t>0.
\]
Since $u\ge0$ and $u\not\equiv0$, it follows that
\[
F(u)\not\equiv0.
\]
Hence
\[
\left(\frac1{|x|^\mu}*F(u)\right)f(u)\ge0
\qquad\text{a.e. in }\R^2.
\]
Thus $u$ is a nontrivial nonnegative weak solution of
\[
-\Delta u+(-\Delta)^s u+V(x)u
=
\left(\frac1{|x|^\mu}*F(u)\right)f(u)
\qquad\text{in }\R^2.
\]

It remains to justify the strict positivity. We first note that the right-hand side has the local integrability required for the maximum principle. Indeed, by Lemma~\ref{Lemma2.3},
\[
F(u)\in L^{p_\mu}(\R^2),
\qquad
p_\mu=\frac{4}{4-\mu}.
\]
The Hardy--Littlewood--Sobolev inequality yields
\[
\frac1{|x|^\mu}*F(u)\in L^{4/\mu}(\R^2).
\]
On the other hand, the subcritical Trudinger--Moser growth of $f$ implies that
\[
f(u)\in L^m_{\mathrm{loc}}(\R^2)
\qquad\text{for every }m<+\infty.
\]
Choosing $m>4/(4-\mu)$, we obtain
\[
\left(\frac1{|x|^\mu}*F(u)\right)f(u)
\in L^r_{\mathrm{loc}}(\R^2)
\]
for some $r>1$. Therefore the local regularity theory for mixed local--nonlocal elliptic equations gives
\[
u\in C^\alpha_{\mathrm{loc}}(\R^2)
\]
for some $\alpha\in(0,1)$. Since $u\ge0$, $u\not\equiv0$, and
\[
-\Delta u+(-\Delta)^s u+V(x)u\ge0
\qquad\text{in }\R^2
\]
in the weak sense, the strong maximum principle for mixed local--nonlocal elliptic operators, see \cite{Biagi2022CPDE}, implies that
\[
u>0
\qquad\text{in }\R^2.
\]
This completes the proof.
\end{proof}

\section{Existence and multiplicity of sign-changing solutions}

\subsection{Abstract critical point principles}

In this subsection we recall the abstract critical point framework based on invariant sets of descending flow. We shall use the formulations from \cite{LiuLiuWang2015}, which rely on the descending flow method developed in \cite{LiuSun2001}.

Let $X$ be a real Banach space and let $\Phi\in C^1(X,\R)$. Let $Y_1,Y_2\subset X$ be open sets. We set
\[
Z=Y_1\cap Y_2,
\qquad
W=Y_1\cup Y_2,
\qquad
P=\partial Y_1\cap \partial Y_2,
\]
and
\[
K=\{u\in X:\ \Phi'(u)=0\},
\qquad
K_c=\{u\in X:\ \Phi(u)=c,\ \Phi'(u)=0\},
\qquad
\Phi^c=\{u\in X:\ \Phi(u)\le c\}.
\]

\begin{Def}\label{Def4.1}
We say that $\{Y_1,Y_2\}$ is an admissible family of invariant sets with respect to $\Phi$ at level $c$ if the following holds: whenever
\[
K_c\setminus W=\varnothing,
\]
there exists $\varepsilon_0>0$ such that, for every $\varepsilon\in(0,\varepsilon_0)$, there exists a continuous mapping
\[
\sigma:X\to X
\]
satisfying
\begin{itemize}
\item[(a)] $\sigma(Y_1)\subset Y_1$ and $\sigma(Y_2)\subset Y_2$;
\item[(b)] $\sigma|_{\Phi^{c-\varepsilon}}=I$;
\item[(c)] $\sigma(\Phi^{c+\varepsilon}\setminus W)\subset \Phi^{c-\varepsilon}$.
\end{itemize}
\end{Def}

\begin{Def}\label{Def4.2}
A map $G:X\to X$ is called an isometric involution if
\[
G^2=I
\qquad\text{and}\qquad
\|Gx-Gy\|_X=\|x-y\|_X
\quad\text{for all }x,y\in X.
\]
A subset $A\subset X$ is called symmetric if
\[
Gu\in A
\qquad\text{for all }u\in A.
\]
Let
\[
\Gamma=\{A\subset X:\ A\text{ is symmetric and }0\notin A\}.
\]
For $A\in\Gamma$, the genus of $A$, denoted by $\gamma(A)$, is the smallest positive integer $n$ such that there exists a continuous map
\[
h:A\to \R^n\setminus\{0\}
\]
which is odd with respect to $G$, namely
\[
h(Gu)=-h(u)\qquad\text{for all }u\in A.
\]
If no such integer exists, we set
\[
\gamma(A)=+\infty.
\]
We also set
\[
\gamma(\varnothing)=0.
\]
\end{Def}

\begin{Def}\label{Def4.3}
Let $G:X\to X$ be an isometric involution and let $\Phi\in C^1(X,\R)$ be $G$-invariant. We say that $\{Y_1,Y_2\}$ is a $G$-admissible family of invariant sets with respect to $\Phi$ at level $c$ if there exist $\varepsilon_0>0$ and a symmetric neighborhood $N_c$ of $K_c\setminus W$ such that
\[
\gamma(N_c)<+\infty,
\]
and, for every $\varepsilon\in(0,\varepsilon_0)$, there exists a continuous mapping
\[
\sigma:X\to X
\]
satisfying
\begin{itemize}
\item[(a)] $\sigma(Y_1)\subset Y_1$ and $\sigma(Y_2)\subset Y_2$;
\item[(b)] $\sigma\circ G=G\circ \sigma$;
\item[(c)] $\sigma|_{\Phi^{c-2\varepsilon}}=I$;
\item[(d)] $\sigma(\Phi^{c+\varepsilon}\setminus (N_c\cup W))\subset \Phi^{c-\varepsilon}$.
\end{itemize}
\end{Def}

The following two abstract critical point theorems will be used later.

\begin{Thm}\label{Thm4.1}
Let $\Phi\in C^1(X,\R)$ and let $Y_1,Y_2$ be open subsets of $X$. Assume that $\{Y_1,Y_2\}$ is an admissible family of invariant sets with respect to $\Phi$ at every level
\[
c\ge c_*:=\inf_{u\in P}\Phi(u).
\]
Assume further that there exists a continuous mapping
\[
\psi:\Delta\to X
\]
such that
\begin{itemize}
\item[(a)] $\psi(\partial_1\Delta)\subset Y_1$ and $\psi(\partial_2\Delta)\subset Y_2$;
\item[(b)] $\psi(\partial_0\Delta)\cap Z=\varnothing$;
\item[(c)]
\[
\sup_{u\in\partial_0\Delta}\Phi(\psi(u))<c_*.
\]
\end{itemize}
Here
\[
\Delta=\{(t_1,t_2)\in\R^2:\ t_1,t_2\ge0,\ t_1+t_2\le1\},
\]
\[
\partial_0\Delta=\{(t_1,t_2)\in\R^2:\ t_1,t_2\ge0,\ t_1+t_2=1\},
\qquad
\partial_1\Delta=\{0\}\times[0,1],
\qquad
\partial_2\Delta=[0,1]\times\{0\}.
\]
Define
\[
\mathcal H=
\bigl\{
\varphi\in C(\Delta,X):
\varphi(\partial_1\Delta)\subset Y_1,\ 
\varphi(\partial_2\Delta)\subset Y_2,\ 
\varphi|_{\partial_0\Delta}=\psi|_{\partial_0\Delta}
\bigr\},
\]
and
\[
c_0=\inf_{\varphi\in\mathcal H}\ \sup_{u\in \varphi(\Delta)\setminus W}\Phi(u).
\]
Then
\[
c_0\ge c_*
\qquad\text{and}\qquad
K_{c_0}\setminus W\neq\varnothing.
\]
\end{Thm}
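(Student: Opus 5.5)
This is an abstract deformation-type minimax theorem, and I will prove it by contradiction with the deformation map supplied by admissibility.

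\emph{Step 1: $c_0\ge c_*$.} Fix $\varphi\in\mathcal H$. I will show that $\varphi(\Delta)\cap P\ne\varnothing$; then $\sup_{\varphi(\Delta)\setminus W}\Phi\ge\inf_P\Phi=c_*$, because $P=\partial Y_1\cap\partial Y_2$ is disjoint from the open set $W$. Consider the closed sets
\[
A_1=\varphi^{-1}(\overline{Y_1}),\qquad A_2=\varphi^{-1}(\overline{Y_2}).
\]
We have $\partial_1\Delta\subset A_1$ and $\partial_2\Delta\subset A_2$. The set $\varphi^{-1}(Z)$ misses $\partial_0\Delta$ by (b) and contains the vertex $(0,0)$, since $\varphi(0,0)\in Y_1\cap Y_2$. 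I will run a connectedness (Sperner/KKM-type) argument on the triangle. Take the connected component $C$ of $\Delta\setminus\varphi^{-1}(Y_1)$ that meets $\partial_0\Delta$ near the vertex $(1,0)$. If $\psi(1,0)\notin Y_1$, this component separates $\partial_1\Delta$ from that vertex. Its boundary points lie in $\varphi^{-1}(\partial Y_1)$, and the boundary set separating $\partial_1\Delta$ from $\partial_2\Delta$ must cross the region where $\varphi\in\overline{Y_2}$. The goal is a point $x$ with $\varphi(x)\in\partial Y_1\cap\partial Y_2$.

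The cleanest route, which I would try first, is the following. Define the continuous functions $d_i(x)=\operatorname{dist}(\varphi(x),Y_i)-\operatorname{dist}(\varphi(x),X\setminus Y_i)$ and apply a Poincaré–Miranda type lemma on $\Delta$. We have $d_1<0$ on $\partial_1\Delta$ and $d_2<0$ on $\partial_2\Delta$. On $\partial_0\Delta$, since $\psi(\partial_0\Delta)\cap Z=\varnothing$, we get $\max(d_1,d_2)\ge0$. This yields a common zero of $d_1$ and $d_2$, which lies in $P$ modulo the degenerate case. I expect handling the degenerate case to be the main technical obstacle; I would resolve it through the degree of the map $(d_1,d_2)$ relative to the boundary arcs.

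\emph{Step 2: $K_{c_0}\setminus W\ne\varnothing$.} Suppose instead that $K_{c_0}\setminus W=\varnothing$. Since $c_0\ge c_*$, admissibility gives $\varepsilon_0$. Using (c) of the theorem, pick $\varepsilon<\varepsilon_0$ with $\sup_{\partial_0\Delta}\Phi\circ\psi<c_0-\varepsilon$, and pick $\varphi\in\mathcal H$ with $\sup_{\varphi(\Delta)\setminus W}\Phi\le c_0+\varepsilon$. Set $\tilde\varphi=\sigma\circ\varphi$. Then $\tilde\varphi\in\mathcal H$: property (a) of $\sigma$ preserves the conditions on $\partial_1\Delta$ and $\partial_2\Delta$, and property (b) fixes $\partial_0\Delta$ because $\psi(\partial_0\Delta)\subset\Phi^{c_0-\varepsilon}$.

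Now take $x$ with $\sigma(\varphi(x))\notin W$. By the invariance in (a), $\sigma(Y_i)\subset Y_i$, so $\varphi(x)\notin W$. Hence $\Phi(\varphi(x))\le c_0+\varepsilon$, and property (c) gives $\Phi(\sigma\varphi(x))\le c_0-\varepsilon$. Therefore $\sup_{\tilde\varphi(\Delta)\setminus W}\Phi\le c_0-\varepsilon<c_0$, which contradicts the definition of $c_0$. This concludes the proof.
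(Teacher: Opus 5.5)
The paper does not prove Theorem~\ref{Thm4.1}; it imports it from \cite{LiuLiuWang2015,LiuSun2001}. Your Step~2 is the standard deformation argument and is complete as written. The one unstated point is that $c_0$ is finite: $\psi\in\mathcal H$ gives $c_0\le\max_{\psi(\Delta)}\Phi$, and Step~1 together with hypothesis (c) gives $c_0\ge c_*>\sup_{\partial_0\Delta}\Phi\circ\psi>-\infty$. So admissibility can be invoked at level $c_0$.

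The real difference is in Step~1. The standard way to get $\varphi(\Delta)\cap P\neq\varnothing$ is planar connectedness:
\begin{itemize}
\item the component of $\varphi^{-1}(Z)$ containing $(0,0)$ misses $\partial_0\Delta$;
\item its relative frontier is mapped into $\overline Z\setminus Z\subset\partial Y_1\cup\partial Y_2$;
\item by unicoherence of the triangle, this frontier contains a connected set meeting both $\partial_1\Delta$ (where $\varphi\in Y_1$ forces $\varphi\in\partial Y_2$) and $\partial_2\Delta$ (where $\varphi\in\partial Y_1$);
\item connectedness then yields a point of $\partial Y_1\cap\partial Y_2$.
\end{itemize}
Your signed-distance route replaces this separation lemma with a winding-number computation. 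That is more self-contained and uses exactly the three pieces of boundary information. However, as written you only assert the conclusion. Your first, connectedness-style paragraph should be dropped: it speaks of separating $\partial_1\Delta$ from $\partial_2\Delta$, which share the vertex $(0,0)$.

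To close Step~1, note first that there is no degenerate case. Both sets $X\setminus Y_i$ are nonempty, since $\psi(1,0)\in Y_2\setminus Y_1$ and $\psi(0,1)\in Y_1\setminus Y_2$. Because $Y_i$ is open, $d_i<0$ on $Y_i$, $d_i>0$ off $\overline{Y_i}$, and $d_i=0$ exactly on $\partial Y_i$. Hence $d=(d_1,d_2)$ vanishes at $x$ if and only if $\varphi(x)\in P$. A zero on $\partial\Delta$ can only occur on $\partial_0\Delta$, and it already gives a point of $P$. So suppose $d\neq0$ on all of $\Delta$; then $d|_{\partial\Delta}$ has winding number $0$ about the origin.

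Now traverse $\partial\Delta$ as $(0,0)\to(1,0)\to(0,1)\to(0,0)$ and lift the angle of $d$.
\begin{itemize}
\item On $\partial_2\Delta$ the image lies in $\{d_2<0\}$, so the angle stays in $(-\pi,0)$. It starts at some $\theta_0\in(-\pi,-\pi/2)$ and ends at some $\theta_1\in[-\pi/2,0)$, because at $(1,0)$ one has $d_2<0\le d_1$.
\item On $\partial_0\Delta$ the image avoids the open third quadrant, so the angle stays in $[-\pi/2,\pi]$. It ends at some $\theta_2\in(\pi/2,\pi]$, because at $(0,1)$ one has $d_1<0\le d_2$.
\item On $\partial_1\Delta$ the image lies in $\{d_1<0\}$, so the angle stays in $(\pi/2,3\pi/2)$ and ends at $\theta_0+2\pi$.
\end{itemize}
The total increment is $2\pi$, so the winding number is $1$, a contradiction. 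Hence $\varphi(\Delta)\cap P\neq\varnothing$ for every $\varphi\in\mathcal H$. Since $P\cap W=\varnothing$, this gives $c_0\ge c_*$, and with this computation inserted your proof is complete.
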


\begin{Thm}\label{Thm4.2}
Let $\Phi\in C^1(X,\R)$ be a $G$-invariant functional, where $G:X\to X$ is an isometric involution, and let $Y_1,Y_2$ be open subsets of $X$. Assume that $\{Y_1,Y_2\}$ is a $G$-admissible family of invariant sets with respect to $\Phi$ at every level
\[
c\ge c_*:=\inf_{u\in P}\Phi(u).
\]
Assume moreover that for every $n\in\N$ there exists a continuous map
\[
\varphi_n:B_{2n}\to X
\]
such that
\begin{itemize}
\item[(a)] $\varphi_n(0)\in Z$ and
\[
\varphi_n(-t)=G\varphi_n(t)
\qquad\text{for all }t\in B_{2n};
\]
\item[(b)] $\varphi_n(\partial B_{2n})\cap Z=\varnothing$;
\item[(c)]
\[
\sup_{u\in F_G\cup \varphi_n(\partial B_{2n})}\Phi(u)<c_*,
\]
where
\[
F_G=\{u\in X:\ Gu=u\},
\qquad
B_{2n}=\{t\in\R^{2n}:\ |t|\le1\}.
\]
\end{itemize}
Then there exists a sequence of critical values $\{c_j\}_{j\ge3}$ such that
\[
c_j\ge c_*,
\qquad
K_{c_j}\setminus W\neq\varnothing,
\]
and
\[
c_j\to+\infty
\qquad\text{as }j\to+\infty.
\]
\end{Thm}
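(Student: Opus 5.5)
The statement is the abstract multiplicity principle of \cite{LiuLiuWang2015}, and the plan is a $G$-equivariant minimax scheme built on the genus of Definition~\ref{Def4.2}, combined with the deformations $\sigma$ supplied by Definition~\ref{Def4.3}.

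\textbf{Minimax classes.} For $n\in\N$ let
\[
\mathcal G_n=\bigl\{\varphi\in C(B_{2n},X):\ \varphi(-t)=G\varphi(t)\ \forall t,\ \varphi(0)\in Z,\ \varphi|_{\partial B_{2n}}=\varphi_n|_{\partial B_{2n}}\bigr\},
\]
which is nonempty because $\varphi_n\in\mathcal G_n$. For $j\ge 3$ let $\Gamma_j$ be the family of sets
\[
\varphi\bigl(\overline{B_{2n}\setminus Y}\bigr),\qquad n\ge j,\ \varphi\in\mathcal G_n,
\]
where $Y\subset B_{2n}\setminus\{0\}$ is closed, symmetric (with respect to $t\mapsto -t$), and satisfies $\gamma(Y)\le 2n-j$. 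Set
\[
c_j=\inf_{E\in\Gamma_j}\ \sup_{u\in E\setminus W}\Phi(u).
\]
Monotonicity $c_j\le c_{j+1}$ is immediate, since $\Gamma_{j+1}\subset\Gamma_j$. Two structural facts should be routine:
\begin{itemize}
\item[(i)] If $E\in\Gamma_j$ and $\sigma$ is as in Definition~\ref{Def4.3}, then $\sigma(E)\in\Gamma_j$. This uses $\sigma\circ G=G\circ\sigma$ and $\sigma(Y_i)\subset Y_i$ (so $\sigma(\varphi(0))\in Z$). The boundary condition is kept because, by (c) of Theorem~\ref{Thm4.2}, $\Phi<c_*\le c-2\varepsilon$ on $\varphi_n(\partial B_{2n})$ for small $\varepsilon$, so $\sigma=I$ there.
\item[(ii)] If $E\in\Gamma_j$ and $N$ is a symmetric set with $\gamma(N)=k<\infty$, then $\overline{E\setminus N}\in\Gamma_{j-k}$. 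This holds because $\varphi^{-1}(N)$ has genus at most $k$ by the $G$-oddness of $\varphi$, together with subadditivity of the genus.
\end{itemize}

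\textbf{The bound $c_j\ge c_*$.} This is the step I expect to be the main obstacle. Fix $E=\varphi(\overline{B_{2n}\setminus Y})$ and let $O$ be the connected component containing $0$ of $\{t:\varphi(t)\in Z\}$. Then $O$ is a bounded symmetric neighborhood of $0$, since $\varphi(0)\in Z$, $Z$ is open, and $Z$ is $G$-invariant. By (b) of Theorem~\ref{Thm4.2}, $O$ stays away from $\partial B_{2n}$. The Borsuk--Ulam property then gives $\gamma(\partial O)=2n$, so $\gamma(\partial O\setminus Y)\ge j\ge 3$.

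Every $t\in\partial O$ satisfies $\varphi(t)\in\partial Z\subset\overline{Y_1}\cap\overline{Y_2}\setminus Z$. The difficulty is to extract a point lying in $P\setminus W$, or at least a point outside $W$ where $\Phi\ge c_*$.
\begin{itemize}
\item[$\bullet$] My first attempt: show that the symmetric set $\{t\in\partial O:\varphi(t)\in Y_1\setminus \overline{Y_2}\}\cup\{\cdots\}$ splits into pieces whose images cannot carry the full genus. Since $\gamma\ge 3$, some point of $\partial O$ must map into $\partial Y_1\cap\partial Y_2=P$, where $\Phi\ge c_*$.
\item[$\bullet$] Here I expect to need an extra invariance, namely that $G$ exchanges $Y_1$ and $Y_2$ (in the application, $G u=-u$ and $Y_{1,2}$ are neighborhoods of the positive and negative cones). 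I would verify this against the precise formulation in \cite{LiuLiuWang2015}.
\item[$\bullet$] Hypothesis (c) on $F_G$ enters here: it excludes the possibility that the linking points lie on fixed points of $G$, where the genus argument degenerates.
\end{itemize}

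\textbf{Criticality.} Suppose $K_{c_j}\setminus W=\varnothing$. Take the $N_{c_j}$ from Definition~\ref{Def4.3}, which is empty in this case, so $\gamma(N_{c_j})=0$. Choose $E\in\Gamma_j$ with
\[
\sup_{E\setminus W}\Phi\le c_j+\varepsilon .
\]
Then $\sigma(E)\in\Gamma_j$ by (i), and by (d) together with the invariance (a), $\sigma(E)\setminus W\subset\Phi^{c_j-\varepsilon}$. This contradicts the definition of $c_j$.

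\textbf{Unboundedness.} If $c_j\to\bar c<\infty$, use the same deformation at level $\bar c$ with the neighborhood $N_{\bar c}$, for which $\gamma(N_{\bar c})=k<\infty$. For $j$ large with $c_{j+k}\le \bar c+\varepsilon$ and $c_j>\bar c-\varepsilon$, take $E\in\Gamma_{j+k}$ nearly optimal. By (ii), $\overline{E\setminus N_{\bar c}}\in\Gamma_j$, and applying $\sigma$ via (d) pushes it below $\bar c-\varepsilon\le c_j$, a contradiction. Hence $c_j\to+\infty$, and each $c_j$ carries a critical point outside $W$.
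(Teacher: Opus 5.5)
Your architecture is the one in \cite{LiuLiuWang2015}, which the paper cites without proof: the classes $\Gamma_j$ of odd maps with excised sets of genus at most $2n-j$, invariance of $\Gamma_j$ under $\sigma$, and excision of $N_c$ to get unboundedness. But there is a genuine gap at the step you single out, $c_j\ge c_*$. It is left unproved, and it cannot be proved from the hypotheses as written.

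You already use that $Z$ is $G$-invariant, to make $O$ symmetric and get $\gamma(\partial O)=2n$. Nothing in the statement gives this. Your suspicion is correct: the source assumes $Y_2=GY_1$, and without it the statement is false. A counterexample:
\begin{itemize}
\item take $X$ an infinite-dimensional Hilbert space, $G=-\mathrm{id}$ and $\|p\|=1$;
\item set $Y_1=X\setminus\{\pm p\}$ and let $Y_2$ be the open unit ball;
\item set $\Phi(x)=\chi(x-p)+\chi(x+p)$ with $\chi(y)=(\max\{0,1-4\|y\|^2\})^2$;
\item set $\varphi_n(t)=2T_nt$, where $T_n:\R^{2n}\to X$ is a linear isometry.
\end{itemize}
Then $P=\{\pm p\}$, $c_*=1$, $Z=Y_2$, $W=X\setminus\{\pm p\}$ and $F_G=\{0\}$, and (a)--(c) hold. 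Taking $\sigma=\mathrm{id}$, with $N_1$ two small balls about $\pm p$ and $N_c=\varnothing$ for $c>1$, gives $G$-admissibility at every $c\ge 1$. Yet $\Phi\le 1$, so there are no critical values $c_j\to+\infty$. In the application the extra hypothesis does hold, since $(-u)^{+}=-u^{-}$ gives $-P_\varepsilon^+=P_\varepsilon^-$. So you should add it as a hypothesis rather than try to avoid it.

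With $Y_2=GY_1$, the missing step is short and needs only $j\ge 2$.
\begin{itemize}
\item Let $E=\varphi(\overline{B_{2n}\setminus Y})$ and let $O$ be as you defined it. Every $t\in\partial O$ satisfies $\varphi(t)\in\overline Z\setminus Z$.
\item Put $A_i=\{t\in\partial O:\varphi(t)\in Y_i\}$. These sets are disjoint (since $\varphi(t)\notin Z$) and relatively open in $\partial O$.
\item $A_2=-A_1$, because $\varphi(-t)=G\varphi(t)$ and $GY_1=Y_2$. Hence the map equal to $1$ on $A_1$ and $-1$ on $A_2$ is odd and continuous, and $\gamma(A_1\cup A_2)\le 1$.
\item Suppose the compact symmetric set $C=\partial O\cap\overline{B_{2n}\setminus Y}$ were contained in $A_1\cup A_2$. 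Since $\partial O\subset C\cup Y$, subadditivity gives $2n=\gamma(\partial O)\le \gamma(C)+\gamma(Y)\le 1+2n-j$, which is impossible for $j\ge 2$.
\item Hence some $t\in C$ has $\varphi(t)\in\overline{Y_1}\cap\overline{Y_2}\setminus(Y_1\cup Y_2)=P$, and therefore $\sup_{E\setminus W}\Phi\ge c_*$.
\end{itemize}

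Contrary to your third bullet, $F_G$ plays no role in this step. Its job is to keep $K_c\setminus W$ for $c\ge c_*$, and hence $N_c$, away from fixed points. This makes the genus defined and ensures $0\notin\varphi^{-1}(N_c)$ in your step (ii). In the version of \cite{LiuLiuWang2015} where only the closures of the $Y_i$ are invariant, it also ensures $\sigma(\varphi(0))=\varphi(0)\in Z$.

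Finally, in your criticality step, Definition~\ref{Def4.3} only provides \emph{some} finite-genus $N_{c_j}$. You need the deformation with $N_{c_j}=\varnothing$ when $K_{c_j}\setminus W=\varnothing$. That is the intended reading of the definition, and it is what Lemma~\ref{Lemma4.8} supplies.
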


\subsection{The auxiliary operator and basic estimates}

For later use, we set
\[
K=\{u\in E:\ I'(u)=0\}.
\]

For $\beta>0$, define on $E$ the bilinear form
\[
\begin{aligned}
\mathcal A_\beta(w,\varphi)
&=
\int_{\R^2}\nabla w\cdot\nabla \varphi\,dx
+\frac12\iint_{\R^2\times\R^2}
\frac{(w(x)-w(y))(\varphi(x)-\varphi(y))}{|x-y|^{2+2s}}\,dx\,dy \\
&\quad
+\int_{\R^2}(V(x)+\beta)w\varphi\,dx,
\end{aligned}
\]
and the nonlinear functional
\[
\mathcal F_\beta(u)[\varphi]
=
\int_{\R^2}\left(\frac{1}{|x|^\mu}*F(u)\right)f(u)\varphi\,dx
+\beta\int_{\R^2}u\varphi\,dx.
\]

For each $u\in E$, we define $A_\beta(u)\in E$ as the unique weak solution of
\[
\mathcal A_\beta(A_\beta(u),\varphi)=\mathcal F_\beta(u)[\varphi]
\qquad\text{for all }\varphi\in E.
\]
Equivalently, $A_\beta(u)$ is the unique weak solution of
\[
-\Delta w+(-\Delta)^s w +(V(x)+\beta)w
=
\left(\frac{1}{|x|^\mu}*F(u)\right)f(u)+\beta u
\quad\text{in }\R^2.
\]

\begin{Lem}\label{Lemma4.1}
Fix $\beta>0$. Then, for every $u\in E$, there exists a unique element $A_\beta(u)\in E$ such that
\[
\mathcal A_\beta(A_\beta(u),\varphi)=\mathcal F_\beta(u)[\varphi]
\qquad\text{for all }\varphi\in E.
\]
\end{Lem}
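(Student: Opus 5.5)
This is a direct application of the Lax--Milgram theorem on the Hilbert space $E$. The plan has three steps: verify that $\mathcal A_\beta$ is bounded and coercive on $E$, verify that $\varphi\mapsto\mathcal F_\beta(u)[\varphi]$ is a bounded linear functional on $E$ for each fixed $u\in E$, and then invoke Lax--Milgram (or the Riesz representation theorem, since $\mathcal A_\beta$ is symmetric) to get existence and uniqueness of $A_\beta(u)$.

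\emph{Step 1: the bilinear form.} $\mathcal A_\beta$ is symmetric, and
\[
\mathcal A_\beta(w,\varphi)=\langle w,\varphi\rangle+\beta\int_{\R^2}w\varphi\,dx .
\]
For coercivity we simply drop the nonnegative $\beta$-term: $\mathcal A_\beta(w,w)\ge\|w\|^2$.

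For boundedness we use $(V_1)$, which gives $\|w\|_2^2\le V_0^{-1}\int_{\R^2}V|w|^2\,dx\le V_0^{-1}\|w\|^2$. Combined with Cauchy--Schwarz this yields $|\mathcal A_\beta(w,\varphi)|\le(1+\beta V_0^{-1})\|w\|\|\varphi\|$. Hence $\mathcal A_\beta$ is an inner product on $E$ equivalent to $\langle\cdot,\cdot\rangle$.

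\emph{Step 2: the right-hand side.} Fix $u\in E$ and set $R=\max\{\|u\|,1\}$. Linearity in $\varphi$ is clear. The term $\beta\int_{\R^2}u\varphi\,dx$ is bounded by $\beta V_0^{-1}\|u\|\|\varphi\|$.

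For the Choquard term, take $\varphi\neq0$ and write $\varphi=\|\varphi\|\,v$ with $\|v\|=1$. Lemma~\ref{Lemma2.2} with $t=r=p_\mu=\frac{4}{4-\mu}$ gives
\[
\Bigl|\int_{\R^2}\Bigl(\frac{1}{|x|^\mu}*F(u)\Bigr)f(u)\varphi\,dx\Bigr|
\le C_\mu\|F(u)\|_{p_\mu}\,\|f(u)v\|_{p_\mu}\,\|\varphi\|.
\]
Lemma~\ref{Lemma2.3} then bounds both $\|F(u)\|_{p_\mu}$ and $\|f(u)v\|_{p_\mu}$ by a constant $C_R$, uniformly over $\|v\|\le1$. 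This is exactly the third estimate of that lemma, so the main integrability issue is already settled there. Consequently $\mathcal F_\beta(u)\in E'$ with
\[
\|\mathcal F_\beta(u)\|_{E'}\le C_\mu C_R^2+\beta V_0^{-1}\|u\|.
\]

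\emph{Step 3: conclusion.} Lax--Milgram (equivalently, Riesz applied to the equivalent inner product $\mathcal A_\beta$) yields a unique $A_\beta(u)\in E$ with $\mathcal A_\beta(A_\beta(u),\varphi)=\mathcal F_\beta(u)[\varphi]$ for all $\varphi\in E$. Uniqueness also follows directly: the difference $d$ of two solutions satisfies $\mathcal A_\beta(d,d)=0$, so $\|d\|=0$.

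The only step needing care is the boundedness of the nonlocal term in Step 2. It is handled entirely by the Hardy--Littlewood--Sobolev inequality (Lemma~\ref{Lemma2.2}) together with the uniform Trudinger--Moser bound in Lemma~\ref{Lemma2.3}, so I expect no genuine obstacle.
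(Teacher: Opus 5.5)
Your proposal is correct and matches the paper's proof essentially step for step. The paper also applies Lax--Milgram with the coercivity bound $\mathcal A_\beta(w,w)\ge\|w\|^2$, and it bounds the Choquard term using Lemma~\ref{Lemma2.2} with $t=r=p_\mu$ together with the uniform estimates of Lemma~\ref{Lemma2.3} applied to $v=\varphi/\|\varphi\|$. The one small addition in your version is the explicit check, via $(V_1)$, that $\mathcal A_\beta$ is bounded, which the paper only asserts.
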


\begin{proof}
Fix $u\in E$. We show that $\mathcal F_\beta(u)\in E'$.

The linear term is immediate:
\[
\left|\beta\int_{\R^2}u\varphi\,dx\right|
\le C\|u\|\,\|\varphi\|.
\]

For the Choquard term, let
\[
p_\mu=\frac{4}{4-\mu}.
\]
By Lemma~\ref{Lemma2.2},
\[
\left|
\int_{\R^2}\left(\frac{1}{|x|^\mu}*F(u)\right)f(u)\varphi\,dx
\right|
\le
C_\mu \|F(u)\|_{L^{p_\mu}(\R^2)}
\|f(u)\varphi\|_{L^{p_\mu}(\R^2)}.
\]
Set
\[
R_u=\|u\|+1.
\]
By Lemma~\ref{Lemma2.3},
\[
\|F(u)\|_{L^{p_\mu}(\R^2)}\le C_{R_u}.
\]
If $\varphi=0$, there is nothing to prove. Otherwise, applying Lemma~\ref{Lemma2.3} to
\[
u\in B_{R_u}(0),
\qquad
v=\frac{\varphi}{\|\varphi\|},
\]
we get
\[
\|f(u)\varphi\|_{L^{p_\mu}(\R^2)}
=
\|\varphi\|\,
\left\|f(u)\frac{\varphi}{\|\varphi\|}\right\|_{L^{p_\mu}(\R^2)}
\le C_{R_u}\|\varphi\|.
\]
Therefore
\[
|\mathcal F_\beta(u)[\varphi]|
\le C(u)\|\varphi\|
\qquad\text{for all }\varphi\in E,
\]
and hence
\[
\mathcal F_\beta(u)\in E'.
\]

On the other hand, the bilinear form $\mathcal A_\beta$ is continuous and coercive on $E$:
\[
\mathcal A_\beta(w,w)
=
\|w\|^2+\beta\|w\|_2^2
\ge \|w\|^2
\qquad\text{for all }w\in E.
\]
Thus the Lax--Milgram theorem yields a unique $A_\beta(u)\in E$ such that
\[
\mathcal A_\beta(A_\beta(u),\varphi)=\mathcal F_\beta(u)[\varphi]
\qquad\text{for all }\varphi\in E.
\]
\end{proof}

\begin{Lem}\label{Lemma4.2}
Fix $\beta>0$. Then the map
\[
A_\beta:E\to E
\]
is continuous.
\end{Lem}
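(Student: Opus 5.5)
Let $u_n\to u$ in $E$ and set $w_n=A_\beta(u_n)$, $w=A_\beta(u)$. Subtracting the defining identities of Lemma~\ref{Lemma4.1} and testing with $\varphi=w_n-w$, coercivity $\mathcal A_\beta(\varphi,\varphi)\ge\|\varphi\|^2$ gives
\[
\|w_n-w\|^2\le \bigl|\mathcal F_\beta(u_n)[w_n-w]-\mathcal F_\beta(u)[w_n-w]\bigr|.
\]
It therefore suffices to show $\|\mathcal F_\beta(u_n)-\mathcal F_\beta(u)\|_{E'}\to0$. The linear part is immediate: $\beta|\int(u_n-u)\varphi|\le C\|u_n-u\|\,\|\varphi\|$.

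For the Choquard part we split
\[
\Bigl(\tfrac1{|x|^\mu}*F(u_n)\Bigr)f(u_n)-\Bigl(\tfrac1{|x|^\mu}*F(u)\Bigr)f(u)
=\Bigl(\tfrac1{|x|^\mu}*(F(u_n)-F(u))\Bigr)f(u_n)+\Bigl(\tfrac1{|x|^\mu}*F(u)\Bigr)(f(u_n)-f(u)).
\]
Tested against $\varphi$ with $\|\varphi\|\le1$, Lemma~\ref{Lemma2.2} with $p_\mu=\frac4{4-\mu}$ bounds the first term by $C\|F(u_n)-F(u)\|_{p_\mu}\|f(u_n)\varphi\|_{p_\mu}$. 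Since $\|u_n\|\le R$, Lemma~\ref{Lemma2.3} gives a uniform bound on $\|f(u_n)\varphi\|_{p_\mu}$. The second term is bounded by $C\|F(u)\|_{p_\mu}\sup_{\|\varphi\|\le1}\|(f(u_n)-f(u))\varphi\|_{p_\mu}$. Hence we need two facts:
\[
F(u_n)\to F(u)\ \text{in }L^{p_\mu},\qquad \|(f(u_n)-f(u))\varphi\|_{p_\mu}\to0\ \text{uniformly in }\|\varphi\|\le1.
\]

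The first fact follows exactly as in Lemma~\ref{Lemma3.5}. Strong convergence in $E$ gives convergence in $L^2$ and hence in measure. The Trudinger--Moser bound of Proposition~\ref{pro2.1}, with $\alpha$ small relative to $R=\sup\|u_n\|$, yields uniform $L^{p_\mu\sigma}$ bounds and thus uniform integrability, and Vitali's theorem concludes. Here we use continuous embeddings rather than the compact one, which is fine since $u_n\to u$ strongly.

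For the second fact, choose $\sigma>1$ and use H\"older: $\|(f(u_n)-f(u))\varphi\|_{p_\mu}\le\|f(u_n)-f(u)\|_{p_\mu\sigma}\|\varphi\|_{p_\mu\sigma'}$. The second factor is bounded by $C\|\varphi\|$ by the embedding $E\hookrightarrow L^p$ (choosing $\sigma$ so that $p_\mu\sigma'\ge2$). So it remains to prove $f(u_n)\to f(u)$ in $L^{p_\mu\sigma}$. By \eqref{eq2.1}, $|f(t)|\le C(|t|^{\frac{2-\mu}{2}}+|t|^{q-1}(e^{\alpha t^2}-1))$. The exponential piece gains integrability via Proposition~\ref{pro2.1} applied with a further exponent $\rho>1$, and this gives uniform integrability of $|f(u_n)|^{p_\mu\sigma}$ on sets of small measure. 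Combined with convergence in measure, this yields local Vitali convergence.

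The main obstacle is the tail at infinity. The power $|t|^{(2-\mu)/2}$ raised to $p_\mu\sigma$ has exponent $\frac{2(2-\mu)\sigma}{4-\mu}$, which may be below $2$, so the $L^2$ bound of $u_n$ does not directly control the tails. To handle this I would instead avoid estimating $f(u_n)$ alone and bound $\|(f(u_n)-f(u))\varphi\|_{p_\mu}$ by splitting pointwise:
\[
|f(u_n)\varphi|\le C\bigl(|u_n|^{\frac{2-\mu}{2}}|\varphi|+|u_n|^{q-1}(e^{\alpha u_n^2}-1)|\varphi|\bigr).
\]
The first term lies in $L^{p_\mu}$ by H\"older with exponents $\frac{4}{2-\mu}\cdot\frac{2-\mu}{2}=2$ and $2$, as in Lemma~\ref{Lemma2.3}. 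Then I would control tails uniformly: for large balls $B_\rho^c$, $\|u_n\|_{L^2(B_\rho^c)}$ is small uniformly in $n$ by strong $L^2$ convergence, which makes the contribution of $B_\rho^c$ small uniformly in $\|\varphi\|\le1$. On $B_\rho$, the generalized dominated convergence argument with convergence in measure applies, giving the uniform-in-$\varphi$ estimate. If this fails, the fallback is the subsequence argument: show that every subsequence has a further subsequence with dominating functions in $L^2$, and apply dominated convergence to $|f(u_n)-f(u)|^{2p_\mu/(2-p_\mu)}$-type quantities.
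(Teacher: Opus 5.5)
Your proof is correct and follows the paper's skeleton. Coercivity of $\mathcal A_\beta$ reduces the claim to $\|\mathcal F_\beta(u_n)-\mathcal F_\beta(u)\|_{E'}\to0$. The Choquard difference is split into the same two terms, and $F(u_n)\to F(u)$ in $L^{p_\mu}$ is taken from Lemma~\ref{Lemma3.5}.

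The routes differ only in the uniform multiplier convergence $\sup_{\|\varphi\|\le1}\|(f(u_n)-f(u))\varphi\|_{p_\mu}\to0$. The paper argues by contradiction. It picks $\varphi_n$ in the unit ball that almost realize the supremum, shows $(f(u_n)-f(u))\varphi_n\to0$ in measure (Chebyshev for $\varphi_n$) with a uniform $L^{p_\mu\sigma}$ bound, and applies Vitali to these products. You instead decouple $\varphi$ by H\"older, so uniformity in $\varphi$ comes for free. Only the convergence of $f(u_n)$ in a single Lebesgue space remains to be proved, which is shorter and avoids the extremal-sequence device.

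The tail obstacle you raise disappears with the right exponent. Under your constraint $p_\mu\sigma'\ge2$, the exponent $\frac{2(2-\mu)\sigma}{4-\mu}$ is at most $2$. Equality holds exactly when $p_\mu\sigma'=2$, i.e.\ $p_\mu\sigma=\frac4{2-\mu}=\frac{2p_\mu}{2-p_\mu}$, which is the exponent of your fallback. With this choice, H\"older and \eqref{eq2.1} give
\[
\|(f(u_n)-f(u))\varphi\|_{p_\mu}\le\|f(u_n)-f(u)\|_{\frac4{2-\mu}}\|\varphi\|_2,
\qquad
|f(u_n)|^{\frac4{2-\mu}}\le C\Bigl(|u_n|^2+|u_n|^{\frac{4(q-1)}{2-\mu}}\bigl(e^{\alpha u_n^2}-1\bigr)^{\frac4{2-\mu}}\Bigr).
\]
The first term is uniformly integrable and tight because $u_n\to u$ in $L^2$. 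For the second, take $\alpha$ small relative to $\sup_n\|u_n\|$. H\"older and Proposition~\ref{pro2.1} then bound its integral over any set $A$ by $C\|u_n\|_{L^p(A)}^{\gamma}$ for some finite $p\ge2$ and $\gamma>0$. This is uniformly small on small or distant sets by strong $L^p$ convergence. Vitali then gives $f(u_n)\to f(u)$ in $L^{4/(2-\mu)}$, and no ball splitting is needed. Alternatively, $(f_2)$ gives $|f(t)|\le C|t|$ near $0$, which is better than \eqref{eq2.1}; then $f(u_n)\to f(u)$ in $L^2$, and you can pair with $\|\varphi\|_{4/(2-\mu)}$.

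Two cautions on the alternatives you sketched:
\begin{itemize}
\item In the ball-splitting plan, uniformity in $\varphi$ on $B_\rho$ must again come from H\"older plus local Vitali. Generalized dominated convergence along a fixed sequence does not give it.
\item In the subsequence fallback, an $L^2$ dominant cannot control the exponential term. You would need a dominant in $H^1(\R^2)$, e.g.\ $|u|+\sum_k|u_{n_k}-u|$ along a fast subsequence, so that Proposition~\ref{pro2.1} applies.
\end{itemize}

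Finally, your concern about tails is legitimate. Vitali on $\R^2$ needs tightness, which the paper uses without stating it. In the paper's argument it comes from the factor $|u_n|^{\frac{2-\mu}{2}}$ together with $u_n\to u$ in $L^2$.
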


\begin{proof}
Let $u_n\to u$ in $E$. Set
\[
w_n=A_\beta(u_n),
\qquad
w=A_\beta(u),
\qquad
p_\mu=\frac{4}{4-\mu}.
\]
Since $u_n\to u$ in $E$, the sequence $\{u_n\}$ is bounded in $E$.

We first claim that
\[
\mathcal F_\beta(u_n)\to \mathcal F_\beta(u)
\qquad\text{in }E'.
\]
Arguing exactly as in the proof of Lemma~\ref{Lemma3.5}, one obtains
\[
F(u_n)\to F(u)\quad\text{in }L^{p_\mu}(\R^2).
\]
We also need the following uniform multiplier convergence:
\begin{equation}\label{eq4.multiplier-convergence}
\sup_{\|\varphi\|\le1}
\|(f(u_n)-f(u))\varphi\|_{L^{p_\mu}(\R^2)}\to0.
\end{equation}
Indeed, if \eqref{eq4.multiplier-convergence} were false, then, up to a subsequence, there would exist
$\{\varphi_n\}\subset E$ with $\|\varphi_n\|\le1$ and $\delta_0>0$ such that
\[
\|(f(u_n)-f(u))\varphi_n\|_{L^{p_\mu}(\R^2)}\ge\delta_0.
\]
Since $u_n\to u$ in $E$, we have $u_n\to u$ in measure on $\R^2$, and therefore
$f(u_n)\to f(u)$ in measure. The boundedness of $\{\varphi_n\}$ in $E$, together with Chebyshev's inequality, implies that
\[
(f(u_n)-f(u))\varphi_n\to0
\quad\text{in measure on }\R^2.
\]
Moreover, by the growth estimate \eqref{eq2.1}, the Trudinger--Moser inequality, Lemma~\ref{Lemma2.3}, and H\"older's inequality, there exists $\sigma>1$ such that
\[
\sup_n
\|(f(u_n)-f(u))\varphi_n\|_{L^{p_\mu\sigma}(\R^2)}<+\infty.
\]
Thus the family
\[
\{|(f(u_n)-f(u))\varphi_n|^{p_\mu}\}_n
\]
is uniformly integrable in $L^1(\R^2)$. Vitali's theorem gives
\[
(f(u_n)-f(u))\varphi_n\to0
\quad\text{in }L^{p_\mu}(\R^2),
\]
which is a contradiction. Hence \eqref{eq4.multiplier-convergence} holds.

For $\|\varphi\|\le1$, by Lemma~\ref{Lemma2.2} we have
\[
\begin{aligned}
&| (\mathcal F_\beta(u_n)-\mathcal F_\beta(u))[\varphi] | \\
&\le C
\|F(u_n)-F(u)\|_{L^{p_\mu}}
\|f(u_n)\varphi\|_{L^{p_\mu}}
+C
\|F(u)\|_{L^{p_\mu}}
\|(f(u_n)-f(u))\varphi\|_{L^{p_\mu}} \\
&\quad +\beta\|u_n-u\|_2\,\|\varphi\|_2.
\end{aligned}
\]
The first term tends to zero by the convergence of $F(u_n)$ and Lemma~\ref{Lemma2.3}; the second one tends to zero by \eqref{eq4.multiplier-convergence}; and the last one tends to zero because $u_n\to u$ in $E$. Therefore
\[
\|\mathcal F_\beta(u_n)-\mathcal F_\beta(u)\|_{E'}\to0.
\]

Now let
\[
z_n=w_n-w.
\]
By the definition of $A_\beta$,
\[
\mathcal A_\beta(z_n,\varphi)
=
(\mathcal F_\beta(u_n)-\mathcal F_\beta(u))[\varphi]
\qquad\text{for all }\varphi\in E.
\]
Taking $\varphi=z_n$, we obtain
\[
\|z_n\|^2
\le
\mathcal A_\beta(z_n,z_n)
=
(\mathcal F_\beta(u_n)-\mathcal F_\beta(u))[z_n]
\le
\|\mathcal F_\beta(u_n)-\mathcal F_\beta(u)\|_{E'}\,\|z_n\|.
\]
Thus
\[
\|z_n\|
\le
\|\mathcal F_\beta(u_n)-\mathcal F_\beta(u)\|_{E'}
\to0.
\]
Therefore
\[
A_\beta(u_n)\to A_\beta(u)
\qquad\text{in }E.
\]
\end{proof}

\begin{Lem}\label{Lemma4.3}
For every $u\in E$, one has
\[
\langle I'(u),u-A_\beta(u)\rangle
=
\|u-A_\beta(u)\|^2+\beta\|u-A_\beta(u)\|_2^2.
\]
In particular,
\[
\langle I'(u),u-A_\beta(u)\rangle\ge \|u-A_\beta(u)\|^2,
\]
and
\[
I'(u)=0
\qquad\Longleftrightarrow\qquad
A_\beta(u)=u.
\]
\end{Lem}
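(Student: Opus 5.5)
The identity is purely algebraic once we rewrite $I'(u)$ in terms of the bilinear form $\mathcal A_\beta$ and the functional $\mathcal F_\beta$. The key observation is that, for all $u,\varphi\in E$,
\[
\langle I'(u),\varphi\rangle
=\mathcal A_\beta(u,\varphi)-\mathcal F_\beta(u)[\varphi].
\]
To see this, note that $\mathcal A_\beta(u,\varphi)=\langle u,\varphi\rangle+\beta\int_{\R^2}u\varphi\,dx$. The two $\beta$-terms then cancel, and what remains is exactly the expression for $I'(u)[\varphi]$ recorded after Lemma~\ref{Lemma2.3}. By Lemma~\ref{Lemma4.1}, $A_\beta(u)\in E$ is well defined and satisfies $\mathcal F_\beta(u)[\varphi]=\mathcal A_\beta(A_\beta(u),\varphi)$ for all $\varphi\in E$. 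Substituting this gives
\[
\langle I'(u),\varphi\rangle=\mathcal A_\beta\bigl(u-A_\beta(u),\varphi\bigr)\qquad\text{for all }\varphi\in E.
\]

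Next, take the test function $\varphi=u-A_\beta(u)$, which lies in $E$. Since $\mathcal A_\beta(w,w)=\|w\|^2+\beta\|w\|_2^2$, as already observed in the proof of Lemma~\ref{Lemma4.1}, this yields the claimed identity. Because $\beta>0$, the inequality $\langle I'(u),u-A_\beta(u)\rangle\ge\|u-A_\beta(u)\|^2$ follows immediately.

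For the equivalence, suppose first that $I'(u)=0$. Then the left-hand side of the identity vanishes, so $\|u-A_\beta(u)\|=0$, that is, $A_\beta(u)=u$. Conversely, if $A_\beta(u)=u$, then the displayed formula gives $\langle I'(u),\varphi\rangle=\mathcal A_\beta(0,\varphi)=0$ for every $\varphi\in E$, hence $I'(u)=0$.

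There is no genuine obstacle here. The only point requiring care is the bookkeeping in the first step: one must check that the normalization $\tfrac12$ in the Gagliardo term and the $\beta\int u\varphi$ contributions match between $\mathcal A_\beta$, $\mathcal F_\beta$ and $I'$, so that $I'(u)=\mathcal A_\beta(u,\cdot)-\mathcal F_\beta(u)$ holds exactly on $E$.
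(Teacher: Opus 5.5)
Your proposal is correct and follows essentially the same route as the paper: both write $\langle I'(u),\varphi\rangle=\mathcal A_\beta(u,\varphi)-\mathcal A_\beta(A_\beta(u),\varphi)$ using the defining equation of $A_\beta(u)$, then test with $\varphi=u-A_\beta(u)$. Your explicit treatment of the converse direction, via the identity for arbitrary $\varphi$, fills in what the paper calls ``immediate.''
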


\begin{proof}
Let
\[
v=u-A_\beta(u).
\]
By the definitions of $I'(u)$ and $A_\beta(u)$,
\[
\begin{aligned}
\langle I'(u),v\rangle
&=
\langle u,v\rangle
-\int_{\R^2}\left(\frac{1}{|x|^\mu}*F(u)\right)f(u)v\,dx \\
&=
\langle u,v\rangle
+\beta\int_{\R^2}u\,v\,dx
-\mathcal A_\beta(A_\beta(u),v) \\
&=
\mathcal A_\beta(u,v)-\mathcal A_\beta(A_\beta(u),v) \\
&=
\mathcal A_\beta(u-A_\beta(u),u-A_\beta(u)),
\end{aligned}
\]
which is the desired identity. The last assertion is immediate.
\end{proof}

\begin{Cor}\label{Cor4.1}
For every $u\in E$, one has
\[
\|u-A_\beta(u)\|\le \|I'(u)\|_{E'}.
\]
In particular, if $\{u_n\}\subset E$ is a Palais--Smale sequence for $I$, then
\[
u_n-A_\beta(u_n)\to 0
\qquad\text{in }E.
\]
\end{Cor}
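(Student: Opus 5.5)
The corollary follows directly from Lemma~\ref{Lemma4.3}, combined with the definition of the dual norm on $E'$. Fix $u\in E$ and set $v=u-A_\beta(u)$. If $v=0$ there is nothing to prove. Otherwise, Lemma~\ref{Lemma4.3} gives
\[
\|v\|^2\le \|v\|^2+\beta\|v\|_2^2=\langle I'(u),v\rangle .
\]
By the definition of the norm in $E'$,
\[
\langle I'(u),v\rangle\le \|I'(u)\|_{E'}\,\|v\| .
\]
Dividing by $\|v\|>0$ yields $\|u-A_\beta(u)\|\le \|I'(u)\|_{E'}$, which is the first assertion.

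For the second assertion, let $\{u_n\}\subset E$ be a Palais--Smale sequence for $I$. By definition, $\{I(u_n)\}$ is bounded and $I'(u_n)\to0$ in $E'$. Applying the inequality just proved to each $u_n$ gives
\[
\|u_n-A_\beta(u_n)\|\le \|I'(u_n)\|_{E'}\to0,
\]
so $u_n-A_\beta(u_n)\to0$ in $E$.

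No real obstacle is expected here. The only point to check is that the pairing in Lemma~\ref{Lemma4.3} is the duality pairing between $E'$ and $E$, so that the bound $|\langle I'(u),v\rangle|\le\|I'(u)\|_{E'}\|v\|$ applies. It also matters that the nonnegative term $\beta\|v\|_2^2$ only strengthens the inequality, so it may simply be discarded. Note that the argument uses neither the boundedness of $I(u_n)$ nor any compactness.
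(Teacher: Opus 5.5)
Your proof is correct and follows the paper's argument. Lemma~\ref{Lemma4.3} gives $\|v\|^2\le\langle I'(u),v\rangle\le\|I'(u)\|_{E'}\|v\|$ with $v=u-A_\beta(u)$; dividing by $\|v\|$, after handling $v=0$ separately, gives the estimate, and the Palais--Smale consequence follows immediately.
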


\begin{proof}
By Lemma~\ref{Lemma4.3},
\[
\|u-A_\beta(u)\|^2
\le
\langle I'(u),u-A_\beta(u)\rangle
\le
\|I'(u)\|_{E'}\|u-A_\beta(u)\|.
\]
This gives the first assertion. The second assertion follows immediately from the first one.
\end{proof}

\begin{Lem}\label{Lemma4.4}
The functional $I$ satisfies the $(PS)_\ell$ condition for every $\ell \in \mathbb{R}$.
\end{Lem}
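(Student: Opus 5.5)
Let $\{u_n\}\subset E$ satisfy $I(u_n)\to\ell$ and $I'(u_n)\to0$ in $E'$. The plan has two steps: first show that $\{u_n\}$ is bounded, then upgrade weak convergence to strong convergence using the compact embedding of Remark~\ref{Rek2.1} and the convergence arguments of Lemma~\ref{Lemma3.5}.

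\emph{Boundedness.} We argue as in Lemma~\ref{Lemma3.4}, but keep the error term coming from $I'(u_n)$. By $(f_4)$,
\[
\ell+o_n(1)+o_n(1)\|u_n\|
\ge I(u_n)-\frac{1}{2\theta}I'(u_n)[u_n]
\ge\Bigl(\frac12-\frac1{2\theta}\Bigr)\|u_n\|^2,
\]
because $\bigl(\frac{1}{|x|^\mu}*F(u_n)\bigr)\ge0$ (recall $F\ge0$) and $\frac{1}{2\theta}f(t)t-\frac12F(t)\ge0$. Since $\theta>1$, this gives $\sup_n\|u_n\|=:M<\infty$.

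\emph{Strong convergence.} Up to a subsequence we have $u_n\rightharpoonup u$ in $E$, $u_n\to u$ in $L^p(\R^2)$ for every $p\in[2,\infty)$ by Remark~\ref{Rek2.1}, and $u_n\to u$ a.e. We then repeat the argument of Lemma~\ref{Lemma3.5}. It only used the bound $\|u_n\|\le M$, convergence in measure, and the Trudinger--Moser bound of Proposition~\ref{pro2.1} with $\alpha$ chosen so small that $\alpha p_\mu\sigma\rho C_E^2M^2<4\pi$. This yields $F(u_n)\to F(u)$ in $L^{p_\mu}(\R^2)$. Next we write
\[
\|u_n-u\|^2=I'(u_n)[u_n-u]-I'(u)[u_n-u]+\int_{\R^2}\Bigl[\Bigl(\tfrac{1}{|x|^\mu}*F(u_n)\Bigr)f(u_n)-\Bigl(\tfrac{1}{|x|^\mu}*F(u)\Bigr)f(u)\Bigr](u_n-u)\,dx.
\]
The first term is bounded by $\|I'(u_n)\|_{E'}(M+\|u\|)\to0$. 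The second term tends to $0$ by weak convergence, since $I'(u)\in E'$.

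\emph{The nonlocal term (main obstacle).} By Lemma~\ref{Lemma2.2}, the remaining integral is bounded by
\[
C_\mu\|F(u_n)\|_{p_\mu}\|f(u_n)(u_n-u)\|_{p_\mu}+C_\mu\|F(u)\|_{p_\mu}\|f(u)(u_n-u)\|_{p_\mu},
\]
where Lemma~\ref{Lemma2.3} keeps $\|F(u_n)\|_{p_\mu}$ bounded. It therefore suffices to show that $\|f(u_n)(u_n-u)\|_{p_\mu}\to0$, and the same for $f(u)$. For this we use \eqref{eq2.1} with small $\alpha$. The first piece satisfies
\[
\bigl\||u_n|^{\frac{2-\mu}{2}}|u_n-u|\bigr\|_{p_\mu}\le\|u_n\|_2^{\frac{2-\mu}{2}}\|u_n-u\|_2\to0.
\]
The second piece is handled by Hölder's inequality:
\[
\|e^{\alpha u_n^2}-1\|_{L^{p_\mu\sigma}}\,\|u_n\|^{q-1}_{L^{2(q-1)p_\mu\sigma'}}\,\|u_n-u\|_{L^{2p_\mu\sigma'}}\to0.
\]
Here the exponential factor is uniformly bounded by Proposition~\ref{pro2.1}, and the last factor tends to $0$ by Remark~\ref{Rek2.1}. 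The key point is that compactness comes from $(V_2)$: all exponents appearing above are finite and at least $2$, so the compact embedding applies. Hence $\|u_n-u\|\to0$, and $(PS)_\ell$ holds.
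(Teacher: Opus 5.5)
Your proof is correct, and its skeleton matches the paper's. Boundedness comes from $I(u_n)-\frac{1}{2\theta}I'(u_n)[u_n]$ and $(f_4)$, and compactness from Remark~\ref{Rek2.1}. Then, via an identity for $\|u_n-u\|^2$ and Lemma~\ref{Lemma2.2}, everything reduces to showing $\|f(u_n)(u_n-u)\|_{p_\mu}\to0$.

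Two things differ. First, the decomposition: the paper uses $\langle u_n,u_n-u\rangle=\|u_n-u\|^2+\langle u,u_n-u\rangle$, so only the nonlinear term at $u_n$ appears. Because you subtract $I'(u)[u_n-u]$, you also have to treat $f(u)(u_n-u)$. This is harmless, since it is the same estimate with $u_n$ replaced by $u$.

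Second, and more substantively, the key convergence: the paper proves $f(u_n)(u_n-u)\to0$ in $L^{p_\mu}(\R^2)$ by Vitali's theorem, using convergence in measure plus a uniform $L^{p_\mu\sigma}$ bound. You obtain it directly from \eqref{eq2.1} and H\"older's inequality, with the decay supplied by $\|u_n-u\|_2$ and $\|u_n-u\|_{L^{2p_\mu\sigma'}}$ through the compact embedding. Your route is more elementary and quantitative. It also avoids a point the paper leaves implicit: on the infinite-measure space $\R^2$, Vitali's theorem needs tightness as well. Tightness does not follow from the small-set bound $\int_A|\cdot|^{p_\mu}\le C|A|^{1/\sigma'}$ alone.

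Finally, you never actually use $F(u_n)\to F(u)$ in $L^{p_\mu}(\R^2)$, only the uniform bound of Lemma~\ref{Lemma2.3}. The appeal to Lemma~\ref{Lemma3.5} can therefore be dropped.
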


\begin{proof}
Let $\{u_n\} \subset E$ be a $(PS)_\ell$ sequence, namely
\[
I(u_n)\to \ell,
\qquad
I'(u_n)\to0 \quad\text{in }E'.
\]

We first show that $\{u_n\}$ is bounded in $E$. By the definition of $I$,
\[
I(u_n)-\frac{1}{2\theta}I'(u_n)[u_n]
=
\left(\frac12-\frac{1}{2\theta}\right)\|u_n\|^2
+\frac12\int_{\R^2}\left(\frac{1}{|x|^\mu}*F(u_n)\right)
\left(\frac{1}{\theta}f(u_n)u_n-F(u_n)\right)\,dx.
\]
By $(f_4)$, the integral term is nonnegative. Hence
\[
I(u_n)-\frac{1}{2\theta}I'(u_n)[u_n]
\ge
\left(\frac12-\frac{1}{2\theta}\right)\|u_n\|^2.
\]
Since $I(u_n)\to\ell$ and $I'(u_n)\to0$ in $E'$, it follows that
\[
\ell+o(1)+o(1)\|u_n\|
\ge
\left(\frac12-\frac{1}{2\theta}\right)\|u_n\|^2,
\]
and therefore $\{u_n\}$ is bounded in $E$.

Passing to a subsequence, we may assume that
\[
u_n\rightharpoonup u \quad\text{in }E,
\]
and, by Remark~\ref{Rek2.1},
\[
u_n\to u \quad\text{in }L^p(\R^2)\ \text{for every }2\le p<+\infty,
\qquad
u_n(x)\to u(x)\quad\text{a.e. in }\R^2.
\]

Set
\[
p_\mu=\frac{4}{4-\mu}.
\]
By the convergence argument proved in Lemma~\ref{Lemma3.5}, applied to the bounded sequence $\{u_n\}$, we have
\[
F(u_n)\to F(u)\quad\text{in }L^{p_\mu}(\R^2).
\]
We now justify the second convergence. Set
\[
v_n=u_n-u.
\]
Then $\{v_n\}$ is bounded in $E$ and, since $u_n\to u$ in $L^2(\R^2)$, one has $v_n\to0$ in measure on $\R^2$. Also $f(u_n)\to f(u)$ in measure, and therefore
\[
f(u_n)v_n\to0
\quad\text{in measure on }\R^2.
\]
Choose $\sigma>1$ close to $1$. Repeating the estimate in Lemma~\ref{Lemma2.3}, with $p_\mu$ replaced by $p_\mu\sigma$ and using the boundedness of both $\{u_n\}$ and $\{v_n\}$ in $E$, gives
\[
\sup_n \|f(u_n)v_n\|_{L^{p_\mu\sigma}(\R^2)}<+\infty.
\]
Indeed, this follows from \eqref{eq2.1}, H\"older's inequality, the compact embedding of $E$ into all finite $L^p$ spaces, and the Trudinger--Moser estimate as in Lemma~\ref{Lemma2.3}. Hence, for every measurable $A\subset\R^2$,
\[
\int_A |f(u_n)v_n|^{p_\mu}\,dx
\le C |A|^{1/\sigma'},
\]
where $\sigma'$ is the conjugate exponent of $\sigma$. Thus $\{|f(u_n)v_n|^{p_\mu}\}$ is uniformly integrable in $L^1(\R^2)$. By Vitali's theorem,
\[
f(u_n)(u_n-u)=f(u_n)v_n\to0
\quad\text{in }L^{p_\mu}(\R^2).
\]
Hence, by Lemma~\ref{Lemma2.2},
\[
\int_{\R^2}\left(\frac{1}{|x|^\mu}*F(u_n)\right)f(u_n)(u_n-u)\,dx\to0.
\]

Now
\[
I'(u_n)[u_n-u]
=
\langle u_n,u_n-u\rangle
-
\int_{\R^2}\left(\frac{1}{|x|^\mu}*F(u_n)\right)f(u_n)(u_n-u)\,dx.
\]
Since
\[
\langle u_n,u_n-u\rangle
=
\|u_n-u\|^2+\langle u,u_n-u\rangle,
\]
we obtain
\[
\|u_n-u\|^2
=
I'(u_n)[u_n-u]
-\langle u,u_n-u\rangle
+
\int_{\R^2}\left(\frac{1}{|x|^\mu}*F(u_n)\right)f(u_n)(u_n-u)\,dx.
\]
The first term tends to $0$ because $I'(u_n)\to0$ in $E'$ and $\{u_n-u\}$ is bounded in $E$. The second term tends to $0$ since $u_n\rightharpoonup u$ in $E$. The last term also tends to $0$ by the previous step. Therefore
\[
\|u_n-u\|\to0.
\]

Thus every $(PS)_\ell$ sequence admits a strongly convergent subsequence in $E$, and $I$ satisfies the $(PS)_\ell$ condition for every $\ell\in\R$.
\end{proof}

\subsection{Invariant neighborhoods of the positive and negative cones}

For $\varepsilon>0$, define
\[
P_\varepsilon^+
=
\{u\in E:\ \|u^-\|<\varepsilon\},
\qquad
P_\varepsilon^-
=
\{u\in E:\ \|u^+\|<\varepsilon\}.
\]
We also denote the positive and negative cones by
\[
P^+=\{u\in E:\ u\ge0\ \text{a.e. in }\R^2\},
\qquad
P^-=\{u\in E:\ u\le0\ \text{a.e. in }\R^2\}.
\]
Let us verify that $P_\varepsilon^+$ and $P_\varepsilon^-$ are open subsets of $E$. If $u_n\to u$ in $E$, then $u_n\to u$ in $H^1(\R^2)$ and in $H^s(\R^2)$. Since the truncations $T^+(t)=t^+$ and $T^-(t)=t^-$ are normal contractions, the standard lattice property of fractional Sobolev spaces gives
\[
u_n^\pm\to u^\pm \quad\text{in }H^s(\R^2),
\]
and the usual chain rule for Lipschitz truncations gives
\[
u_n^\pm\to u^\pm \quad\text{in }H^1(\R^2).
\]
Moreover, $|u_n^\pm-u^\pm|\le |u_n-u|$ a.e.; hence
\[
\int_{\R^2}V(x)|u_n^\pm-u^\pm|^2\,dx
\le
\int_{\R^2}V(x)|u_n-u|^2\,dx\to0.
\]
Thus $u_n^\pm\to u^\pm$ in $E$, so the maps $u\mapsto\|u^\pm\|$ are continuous on $E$. Consequently, $P_\varepsilon^+$ and $P_\varepsilon^-$ are open in $E$.
We also set
\[
Y_1=P_\varepsilon^+,
\qquad
Y_2=P_\varepsilon^-,
\qquad
Z=Y_1\cap Y_2,
\qquad
W=Y_1\cup Y_2,
\qquad
P=\partial Y_1\cap \partial Y_2.
\]

\begin{Pro}\label{Pro4.1}
For every $R>0$, there exist $\beta_R>0$, $\varepsilon_0>0$, and $\kappa\in(0,1)$ such that, for every
\[
\beta\in(0,\beta_R],
\]
one has
\[
\|A_\beta(u)^-\|\le \kappa \|u^-\|
\qquad\text{whenever }\|u\|\le R,\ \|u^-\|\le \varepsilon_0,
\]
and
\[
\|A_\beta(u)^+\|\le \kappa \|u^+\|
\qquad\text{whenever }\|u\|\le R,\ \|u^+\|\le \varepsilon_0.
\]
\end{Pro}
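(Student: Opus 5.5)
Set $w=A_\beta(u)$ and test the defining identity $\mathcal A_\beta(w,\varphi)=\mathcal F_\beta(u)[\varphi]$ with $\varphi=w^-$. On the left, the pointwise inequality $(a-b)(a^--b^-)\ge|a^--b^-|^2$ (already used in the proof of Theorem~\ref{Thm1.1}), together with $\nabla w\cdot\nabla w^-=|\nabla w^-|^2$ and $w\,w^-=|w^-|^2$, gives
\[
\mathcal A_\beta(w,w^-)\ge \|w^-\|^2+\beta\|w^-\|_2^2\ge\|w^-\|^2 .
\]
On the right, since $w^-\le0$ and $u\,w^-\le u^-w^-$, the linear part satisfies $\beta\int u\,w^-\le\beta\int u^-w^-\le \beta\|u^-\|_2\|w^-\|_2\le \beta V_0^{-1}\|u^-\|\,\|w^-\|$. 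For the Choquard part I will use $(f_6)$ (or $(f_5)$): it gives $f(u)w^-\le f(u^-)w^-$, which is $\ge0$ only on $\{u<0\}$. So
\[
\int_{\R^2}\Bigl(\frac1{|x|^\mu}*F(u)\Bigr)f(u)w^-\,dx\le \int_{\R^2}\Bigl(\frac1{|x|^\mu}*F(u)\Bigr)f(u^-)w^-\,dx,
\]
using $F\ge0$ by $(f_4)$. Here $f(u^-)w^-\ge0$ a.e., since $f(t)\le0$ for $t<0$.

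Next I apply Lemma~\ref{Lemma2.2} with $p_\mu=\frac4{4-\mu}$ and bound the last integral by $C_\mu\|F(u)\|_{p_\mu}\|f(u^-)w^-\|_{p_\mu}$. Lemma~\ref{Lemma2.3} gives $\|F(u)\|_{p_\mu}\le C_R$ for $\|u\|\le R$. The key point is to get a superlinear factor of $\|u^-\|$ from $f(u^-)w^-$. By \eqref{eq2.1} with a fixed small $\varepsilon$, and with $\alpha$ chosen as in Lemma~\ref{Lemma2.3}, I bound
\[
|f(u^-)w^-|\le \varepsilon|u^-|^{\frac{2-\mu}2}|w^-|+C|u^-|^{q-1}(e^{\alpha|u^-|^2}-1)|w^-| .
\]
Then I use H\"older and the Trudinger--Moser bound of Proposition~\ref{pro2.1}, which is uniform because $\|u^-\|\le\|u\|\le R$.

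The second term is at most $C_R\|u^-\|^{q-1}\|w^-\|$ with $q>2$. For the first term, the exponent computation of Lemma~\ref{Lemma2.3} gives $\varepsilon\|u^-\|_2^{\frac{2-\mu}2}\|w^-\|_2$, which is sublinear in $\|u^-\|$. This is the main obstacle, and I expect to handle it in one of two ways. The first option is to redo \eqref{eq2.1} with exponent $1$ in place of $\frac{2-\mu}{2}$: by $(f_2)$ one has $|f(t)|\le\varepsilon|t|+C|t|^{q-1}(e^{\alpha t^2}-1)$, and then estimate $\||u^-|w^-\|_{p_\mu}\le\|u^-\|_{2p_\mu}\|w^-\|_{2p_\mu}\le C\|u^-\|\|w^-\|$. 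This works since $2p_\mu>2$ and $E\hookrightarrow L^p$ for all $p\ge2$. The first term then becomes $C\varepsilon\|u^-\|\|w^-\|$ and the second $C_R\|u^-\|^{q-1}\|w^-\|$.

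Collecting everything and dividing by $\|w^-\|$ (the case $w^-=0$ being trivial), I get
\[
\|A_\beta(u)^-\|\le\Bigl(\beta V_0^{-1}+C_R\varepsilon+C_R\|u^-\|^{q-2}\Bigr)\|u^-\|.
\]
I choose $\varepsilon$ so that $C_R\varepsilon\le\frac14$, then $\varepsilon_0$ so that $C_R\varepsilon_0^{q-2}\le\frac14$, then $\beta_R=\frac{V_0}4$, and set $\kappa=\frac34$. The estimate for $A_\beta(u)^+$ follows symmetrically by testing with $w^+$ and using $f(u)w^+\le f(u^+)w^+$. The only delicate point to double-check is that the constant from Lemma~\ref{Lemma2.2} times $\|F(u)\|_{p_\mu}$ depends only on $R$, so that $C_R$ is independent of $\beta$; it is, since neither factor involves $\beta$.
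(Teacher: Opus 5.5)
Your proposal is correct and follows the paper's proof essentially step by step. You test with $w^-$, reduce to $f(u^-)w^-$ and $u^-w^-$, and bound the Choquard term via Lemma~\ref{Lemma2.2} and Lemma~\ref{Lemma2.3}. You then use the linear-order estimate $|f(t)|\le\delta|t|+C|t|^{q-1}(e^{\alpha t^2}-1)$ (your ``first option'', which is exactly what the paper uses) and choose $\delta$, $\beta_R$, $\varepsilon_0$ in that order.

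The only difference is cosmetic. You get the uniform Trudinger--Moser bound from $\|u^-\|\le R$ with an $R$-dependent $\alpha$, whereas the paper takes $\alpha=1$ and shrinks $\varepsilon_0$ until the exponent is subcritical; both work.
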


\begin{proof}
We prove only the estimate for the negative part, since the positive one is analogous.

Fix $R>0$ and let
\[
w=A_\beta(u).
\]
Testing the equation for $w$ with $\varphi=w^-$, and using
\[
\nabla w\cdot \nabla w^- = |\nabla w^-|^2,
\qquad
(a-b)(a^- - b^-)\ge |a^- - b^-|^2,
\]
we obtain
\[
\|w^-\|^2
\le
\int_{\R^2}\left(\frac{1}{|x|^\mu}*F(u)\right)f(u)w^-\,dx
+\beta\int_{\R^2}u\,w^-\,dx.
\]
Since $w^-\le0$, $u^-\le0$, and $f(0)=0$, it follows that
\[
f(u)w^-\le f(u^-)w^-,
\qquad
uw^-\le u^-w^-,
\]
and hence
\[
\|w^-\|^2
\le
\int_{\R^2}\left(\frac{1}{|x|^\mu}*F(u)\right)f(u^-)w^-\,dx
+\beta\int_{\R^2}u^-\,w^-\,dx.
\]

The linear term satisfies
\[
\beta\int_{\R^2}u^-\,w^-\,dx
\le C\beta \|u^-\|\,\|w^-\|.
\]
Let
\[
p_\mu=\frac{4}{4-\mu}.
\]
By Lemma~\ref{Lemma2.2} and Lemma~\ref{Lemma2.3},
\[
\int_{\R^2}\left(\frac{1}{|x|^\mu}*F(u)\right)f(u^-)w^-\,dx
\le
C_R\|f(u^-)w^-\|_{L^{p_\mu}(\R^2)}
\]
whenever $\|u\|\le R$.

Fix $\delta>0$ and $q>2$. By $(f_1)$ and $(f_2)$, for $\alpha=1$ there exists $C_{\delta,q}>0$ such that
\[
|f(t)|
\le
\delta |t|+C_{\delta,q}|t|^{q-1}(e^{t^2}-1)
\qquad\text{for all }t\in\R.
\]
Therefore
\[
|f(u^-)w^-|
\le
\delta |u^-||w^-|
+
C_{\delta,q}|u^-|^{q-1}(e^{|u^-|^2}-1)|w^-|.
\]
The first term is estimated by Sobolev embedding:
\[
\||u^-|w^-\|_{L^{p_\mu}(\R^2)}
\le C\|u^-\|\,\|w^-\|.
\]

For the second term, choose $r>1$ with conjugate exponent $r'$, and choose $\varepsilon_0>0$ so small that
\[
p_\mu r C_E^2\varepsilon_0^2<4\pi,
\]
where \(C_E\) is such that
\[
\|v\|_{H^1(\R^2)}\le C_E\|v\|
\qquad\text{for all }v\in E.
\]
If $\|u^-\|\le\varepsilon_0$, Proposition~\ref{pro2.1} gives
\[
\|e^{|u^-|^2}-1\|_{L^{p_\mu r}(\R^2)}\le C,
\]
and thus, by Hölder's inequality and Sobolev embeddings,
\[
\||u^-|^{q-1}(e^{|u^-|^2}-1)w^-\|_{L^{p_\mu}}
\le
C_R\|u^-\|^{q-1}\|w^-\|.
\]
Consequently,
\[
\|f(u^-)w^-\|_{L^{p_\mu}}
\le
C_R\delta \|u^-\|\,\|w^-\|
+
C_R\|u^-\|^{q-1}\|w^-\|.
\]

Combining the above estimates, we find
\[
\|w^-\|^2
\le
C_R(\delta+\beta)\|u^-\|\,\|w^-\|
+
C_R\|u^-\|^{q-1}\|w^-\|.
\]
If $w^-\ne0$, dividing by $\|w^-\|$ gives
\[
\|w^-\|
\le
C_R(\delta+\beta)\|u^-\|
+
C_R\|u^-\|^{q-1}.
\]

Now fix $\kappa\in(0,1)$. Choose $\delta>0$ and then $\beta_R>0$ such that
\[
C_R(\delta+\beta_R)\le \frac{\kappa}{2}.
\]
Shrinking $\varepsilon_0\in(0,1)$ if necessary, we may also assume
\[
C_R\varepsilon_0^{q-2}\le \frac{\kappa}{2}.
\]
Hence, whenever $\beta\in(0,\beta_R]$, $\|u\|\le R$, and $\|u^-\|\le\varepsilon_0$, we obtain
\[
\|w^-\|
\le
\kappa\|u^-\|.
\]
Since $w=A_\beta(u)$, this proves
\[
\|A_\beta(u)^-\|\le \kappa\|u^-\|.
\]

The estimate for the positive part is proved in the same way.
\end{proof}

\begin{Cor}\label{Cor4.2}
For every $R>0$, there exist $\beta_R>0$, $\varepsilon_0>0$, and $\kappa\in(0,1)$ such that, for every
\[
\beta\in(0,\beta_R],
\qquad
\varepsilon\in(0,\varepsilon_0),
\]
one has
\[
A_\beta\bigl(\overline{P_\varepsilon^+}\cap B_R(0)\bigr)\subset P_{\kappa\varepsilon}^+,
\qquad
A_\beta\bigl(\overline{P_\varepsilon^-}\cap B_R(0)\bigr)\subset P_{\kappa\varepsilon}^-.
\]
\end{Cor}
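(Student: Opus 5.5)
The corollary should follow directly from Proposition~\ref{Pro4.1}. I will take the constants $\beta_R$, $\varepsilon_0$, $\kappa$ given there for the fixed $R>0$ and argue for the positive cone; the negative cone is handled the same way using the second estimate of the proposition.

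Fix $\beta\in(0,\beta_R]$ and $\varepsilon\in(0,\varepsilon_0)$, and let $u\in\overline{P_\varepsilon^+}\cap B_R(0)$. The first step is to identify the closure. The map $u\mapsto\|u^-\|$ is continuous on $E$; this was checked above, when we showed that $P_\varepsilon^\pm$ are open. Hence
\[
\overline{P_\varepsilon^+}\subset\{u\in E:\ \|u^-\|\le\varepsilon\},
\]
so that $\|u^-\|\le\varepsilon<\varepsilon_0$. Also $\|u\|\le R$, whichever convention for the ball is used, since $\|u\|<R$ already gives this. Therefore the hypotheses of Proposition~\ref{Pro4.1} hold, and it gives
\[
\|A_\beta(u)^-\|\le\kappa\|u^-\|\le\kappa\varepsilon .
\]

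The main point is that this yields only a non-strict inequality, while $P_{\kappa\varepsilon}^+$ requires $\|A_\beta(u)^-\|<\kappa\varepsilon$. I would fix this by shrinking the contraction factor. Consider two cases.

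\emph{Case $u^-=0$.} Then $\|A_\beta(u)^-\|\le\kappa\cdot 0=0<\kappa\varepsilon$, and we are done.

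\emph{Case $u^-\neq0$.} Here strictness is not automatic. Note that in the proof of Proposition~\ref{Pro4.1} the value $\kappa\in(0,1)$ was arbitrary. So I apply the proposition with some $\kappa_0<\kappa$, for instance $\kappa_0=\kappa/2$, and take the resulting $\beta_R,\varepsilon_0$. This gives
\[
\|A_\beta(u)^-\|\le\kappa_0\varepsilon<\kappa\varepsilon .
\]

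In both cases $A_\beta(u)\in P_{\kappa\varepsilon}^+$. Equivalently, one may state the corollary with the proposition's $\kappa$ replaced by any larger number in $(0,1)$. The same argument, using $\|u^+\|$ and the second estimate of Proposition~\ref{Pro4.1}, gives $A_\beta(\overline{P_\varepsilon^-}\cap B_R(0))\subset P_{\kappa\varepsilon}^-$.
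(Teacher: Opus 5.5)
Your argument is correct and matches the paper's approach: the paper states the corollary without proof as an immediate consequence of Proposition~\ref{Pro4.1}, and passes over the gap between the non-strict bound and the strict inequality defining $P_{\kappa\varepsilon}^{\pm}$, which you rightly noticed. Your closing remark is the clean fix. Keep the proposition's $\beta_R,\varepsilon_0$ and take the corollary's contraction factor to be any number in $(\kappa,1)$; then $\|A_\beta(u)^-\|\le\kappa\varepsilon$ lies strictly below it for every $u$ at once. The case split is unnecessary, and as written its second case re-chooses $\beta_R,\varepsilon_0$ after $\beta,\varepsilon,u$ have already been fixed, which is out of order.
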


\begin{Lem}\label{Lemma4.5}
There exists $\varepsilon_*>0$ such that, for every $\varepsilon\in(0,\varepsilon_*)$, with $Y_1=P_\varepsilon^+$, $Y_2=P_\varepsilon^-$, and $P=\partial Y_1\cap\partial Y_2$, one has
\[
c_*(\varepsilon):=\inf_{u\in P} I(u)>0.
\]
\end{Lem}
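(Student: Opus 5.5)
First we describe $P$ concretely. The maps $u\mapsto\|u^\pm\|$ are continuous, so $\partial P_\varepsilon^+\subset\{u:\|u^-\|=\varepsilon\}$ and $\partial P_\varepsilon^-\subset\{u:\|u^+\|=\varepsilon\}$. Hence every $u\in P$ satisfies
\[
\|u^+\|=\|u^-\|=\varepsilon .
\]
The idea is to show that such a $u$ has norm comparable to $\varepsilon$, and then that for small $\varepsilon$ the quadratic part of $I(u)$ dominates the Choquard term.

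\textbf{Step 1 (norm bounds on $P$).} We bound $\|u\|$ from above and below in terms of $\varepsilon$.
\begin{itemize}
\item The local part and the potential part of $\|u\|^2$ split exactly: $\int|\nabla u|^2=\int|\nabla u^+|^2+\int|\nabla u^-|^2$, and likewise for $\int V u^2$.
\item For the Gagliardo part, use the pointwise identity $(a-b)^2=(a^+-b^+)^2+(a^--b^-)^2-2(a^+-b^+)(a^--b^-)$. The cross term satisfies $-(a^+-b^+)(a^--b^-)=a^+(-b^-)+(-a^-)b^+\ge0$.
\item Therefore $\|u\|^2\ge\|u^+\|^2+\|u^-\|^2=2\varepsilon^2$, although we mostly need the upper bound.
\item For the upper bound, $\|u\|\le\|u^+\|+\|u^-\|=2\varepsilon$ by the triangle inequality, since $u=u^++u^-$.
\end{itemize}
So every $u\in P$ satisfies $\sqrt2\,\varepsilon\le\|u\|\le2\varepsilon$.

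\textbf{Step 2 (bound on the Choquard term).} We estimate the nonlocal term by a power of $\|u\|$ higher than two. This repeats the argument of Lemma~\ref{Lemma3.3}.
\begin{itemize}
\item By Lemma~\ref{Lemma2.2}, $\int(|x|^{-\mu}*F(u))F(u)\le C_\mu\|F(u)\|_{p_\mu}^2$.
\item By \eqref{eq2.2} and Hölder, $\|F(u)\|_{p_\mu}\le C\bigl(\|u\|_2^{(4-\mu)/2}+\|u\|_{L^{qp_\mu\rho'}}^q\,\|e^{\alpha u^2}-1\|_{L^{p_\mu\rho}}\bigr)$.
\item Fix $\alpha=1$, $\rho=2$ and some $q>2$. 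Once $\|u\|\le 2\varepsilon_*$ with $p_\mu\rho\,C_E^2(2\varepsilon_*)^2<4\pi$, Proposition~\ref{pro2.1} bounds the exponential factor uniformly.
\item Continuity of $E\hookrightarrow L^p$ then gives $\int(|x|^{-\mu}*F(u))F(u)\le C\bigl(\|u\|^{4-\mu}+\|u\|^{2q}\bigr)$ whenever $\|u\|\le2\varepsilon_*$.
\end{itemize}

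\textbf{Step 3 (conclusion).} For $u\in P$ and $\varepsilon<\varepsilon_*$, combining the two steps gives
\[
I(u)\ge\tfrac12\|u\|^2-C\bigl(\|u\|^{4-\mu}+\|u\|^{2q}\bigr)\ge \varepsilon^2-C'\bigl(\varepsilon^{4-\mu}+\varepsilon^{2q}\bigr).
\]
Since $4-\mu>2$ and $2q>2$, shrinking $\varepsilon_*$ makes the right-hand side at least $\varepsilon^2/2$. Hence $c_*(\varepsilon)\ge\varepsilon^2/2>0$. If $P=\varnothing$, the infimum is $+\infty$ and the claim holds trivially.

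The main obstacle is Step 1, namely the lower bound $\|u\|^2\ge\|u^+\|^2+\|u^-\|^2$ for the nonlocal part. This requires checking the sign of the cross term in the Gagliardo form, and the computation in the second bullet of Step 1 handles it. A secondary point is that the Trudinger–Moser constant in Step 2 must be uniform. This holds because $\|u\|\le2\varepsilon$ is small, exactly as in the proof of Lemma~\ref{Lemma3.3}.
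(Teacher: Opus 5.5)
Your proof is correct and follows the paper's argument step for step. You show $\|u^\pm\|=\varepsilon$ on $P$, get $\sqrt2\,\varepsilon\le\|u\|\le2\varepsilon$ from the lattice and triangle inequalities, bound the Choquard term by $C(\|u\|^{4-\mu}+\|u\|^{2q})$ (in more detail than the paper), and conclude $I(u)\ge\varepsilon^2-C'(\varepsilon^{4-\mu}+\varepsilon^{2q})$.

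There is one harmless slip in Step 1. With $u^-=\min\{u,0\}$ one has $a-b=(a^+-b^+)+(a^--b^-)$, so the identity carries $+2(a^+-b^+)(a^--b^-)$. It is this product itself, not its negative, that equals $a^+(-b^-)+(-a^-)b^+\ge0$. Your two minus signs cancel, so $\|u\|^2\ge\|u^+\|^2+\|u^-\|^2$ still holds. Contrary to your aside, this lower bound is exactly what yields $\tfrac12\|u\|^2\ge\varepsilon^2$.
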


\begin{proof}
Let $u\in P=\partial Y_1\cap\partial Y_2$. Since the maps
\[
u\mapsto \|u^+\|,
\qquad
u\mapsto \|u^-\|
\]
are continuous on $E$, we have
\[
\|u^+\|=\varepsilon,
\qquad
\|u^-\|=\varepsilon.
\]
The lattice inequalities for the local and fractional parts, together with the positivity of $V$, give
\[
\|u\|^2\ge \|u^+\|^2+\|u^-\|^2.
\]
On the other hand, since $u=u^++u^-$, the triangle inequality gives
\[
\|u\|\le \|u^+\|+\|u^-\|.
\]
Therefore
\[
\sqrt{2}\,\varepsilon\le \|u\|\le 2\varepsilon.
\]

Using \eqref{eq2.2} and Lemma~\ref{Lemma2.2}, exactly as in the proof of the small-norm estimate for the Choquard term, we obtain
\[
\int_{\R^2}\left(\frac{1}{|x|^\mu}*F(u)\right)F(u)\,dx
\le C\bigl(\|u\|^{4-\mu}+\|u\|^{2q}\bigr)
\]
for some $q>2$ and some constant $C>0$ independent of $u$. Hence
\[
I(u)\ge
\frac12\|u\|^2-C\bigl(\|u\|^{4-\mu}+\|u\|^{2q}\bigr).
\]
Using
\[
\sqrt{2}\,\varepsilon\le \|u\|\le 2\varepsilon,
\]
we obtain
\[
I(u)\ge \varepsilon^2-C_1\varepsilon^{4-\mu}-C_2\varepsilon^{2q}.
\]
Choosing $\varepsilon_*>0$ sufficiently small, we get
\[
I(u)\ge \frac12\varepsilon^2
\qquad\text{for all }u\in P.
\]
Thus
\[
c_*(\varepsilon)=\inf_{u\in P}I(u)\ge \frac12\varepsilon^2>0
\]
for every $\varepsilon\in(0,\varepsilon_*)$.
\end{proof}

\begin{Lem}\label{Lemma4.6}
Let $\varepsilon\in(0,\varepsilon_*)$, where $\varepsilon_*$ is given by Lemma~\ref{Lemma4.5}. Then there exists a continuous map
\[
\psi:\Delta\to E
\]
such that
\[
\psi(\partial_1\Delta)\subset Y_1,
\qquad
\psi(\partial_2\Delta)\subset Y_2,
\qquad
\psi(\partial_0\Delta)\cap Z=\varnothing,
\]
and
\[
\sup_{u\in \psi(\partial_0\Delta)} I(u)<c_*(\varepsilon),
\]
where
\[
\Delta=\{(t_1,t_2)\in\R^2:\ t_1,t_2\ge0,\ t_1+t_2\le1\},
\]
\[
\partial_0\Delta=\{(t_1,t_2)\in\R^2:\ t_1,t_2\ge0,\ t_1+t_2=1\},
\qquad
\partial_1\Delta=\{0\}\times[0,1],
\qquad
\partial_2\Delta=[0,1]\times\{0\}.
\]
\end{Lem}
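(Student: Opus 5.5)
We would construct $\psi$ explicitly from two functions with disjoint supports. Choose $v_1,v_2\in C_c^\infty(\R^2)\subset E$ with $v_1\ge0$, $v_2\le0$, both nontrivial, and with supports at positive distance from each other. For a large parameter $R>0$, to be fixed later, set
\[
\psi(t_1,t_2)=R\,(t_1v_1+t_2v_2),\qquad (t_1,t_2)\in\Delta .
\]
Since $v_1,v_2$ have disjoint supports, we have $\psi(t)^+=Rt_1v_1$ and $\psi(t)^-=Rt_2v_2$. This map is clearly continuous. We then verify the required properties in turn.

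\textbf{Boundary conditions.} On $\partial_1\Delta$ we have $t_1=0$, so $\psi(t)^-=Rt_2v_2$. At first sight this looks like the wrong cone. We must therefore match the convention of Theorem~\ref{Thm4.1}: $\partial_1\Delta=\{0\}\times[0,1]$ should be mapped into $Y_1=P_\varepsilon^+$, meaning its image must have small negative part. We therefore swap the roles and define
\[
\psi(t_1,t_2)=R\,(t_2v_1+t_1v_2),
\]
with $v_1\ge0$ and $v_2\le0$. Then on $\partial_1\Delta$ we get $\psi=Rt_2v_1\ge0$, so $\psi^-=0$ and $\psi\in P_\varepsilon^+=Y_1$. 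Symmetrically, on $\partial_2\Delta$ we get $\psi=Rt_1v_2\le0$, so $\psi\in Y_2$.

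\textbf{Avoiding $Z$ on $\partial_0\Delta$.} On $\partial_0\Delta$ we have $t_1+t_2=1$, so $\max\{t_1,t_2\}\ge\frac12$. Hence either $\|\psi^+\|=Rt_2\|v_1\|\ge \frac R2\|v_1\|$ or $\|\psi^-\|\ge \frac R2\|v_2\|$. Choosing $R>2\varepsilon/\min\{\|v_1\|,\|v_2\|\}$ guarantees that $\psi(t)\notin Y_1\cap Y_2=Z$.

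\textbf{Energy estimate on $\partial_0\Delta$ (main step).} We need
\[
\sup_{\partial_0\Delta}I(\psi)<c_*(\varepsilon).
\]
By Lemma~\ref{Lemma4.5}, $c_*(\varepsilon)>0$, so it suffices to show that $\sup_{\partial_0\Delta}I(\psi)\to-\infty$ as $R\to\infty$. The quadratic part satisfies $\frac12\|\psi\|^2\le R^2(\|v_1\|^2+\|v_2\|^2)$.

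For the Choquard term, $(f_4)$ gives $F\ge0$, so all cross terms in the double integral are nonnegative. Hence
\[
\int\Bigl(\frac1{|x|^\mu}*F(\psi)\Bigr)F(\psi)\,dx\ \ge\ \max_{i}\ \int\Bigl(\frac1{|x|^\mu}*F(Rt_iv_i)\Bigr)F(Rt_iv_i)\,dx .
\]
Here we use that $F(\psi)=F(Rt_2v_1)+F(Rt_1v_2)$ pointwise, by the disjointness of supports and $F(0)=0$.

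Set $A_i(\tau)=\int(|x|^{-\mu}*F(\tau v_i))F(\tau v_i)$. By $(f_6)$, $F>0$ away from $0$, so $A_i(\tau)>0$ for every $\tau>0$. The differential inequality from the proof of Lemma~\ref{Lemma3.2} then gives $A_i(\tau)\ge A_i(1)\tau^{2\theta}$ for $\tau\ge1$; this argument is valid for sign-changing or nonpositive $v_i$ as well, since it only uses $(f_4)$.

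Since $\max\{Rt_1,Rt_2\}\ge R/2$, we obtain
\[
I(\psi(t))\le R^2(\|v_1\|^2+\|v_2\|^2)-\tfrac12\min_i A_i(1)\,(R/2)^{2\theta},
\]
which tends to $-\infty$ because $\theta>1$. Fixing $R$ large enough to satisfy both this estimate and the condition in the previous step completes the proof.

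The only delicate points are the orientation convention for $\partial_1\Delta$ and $\partial_2\Delta$, and the use of $F\ge0$ to discard the cross interaction terms. Both are handled above.
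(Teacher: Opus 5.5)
Your proposal is correct and follows the paper's proof essentially verbatim. The paper takes $\psi_R(t_1,t_2)=R(t_2\eta^+-t_1\eta^-)$ with $\eta^\pm\ge0$ of disjoint support (your $v_1=\eta^+$, $v_2=-\eta^-$), and it handles the boundary inclusions and the avoidance of $Z$ via $\max\{t_1,t_2\}\ge\frac12$ exactly as you do. The only variation is in the energy bound. The paper uses the pointwise estimate $F(\tau)\ge C_0|\tau|^\theta$ for $|\tau|\ge1$ on a set $E_0$ of fixed positive measure where $|t_2\eta^+-t_1\eta^-|\ge\delta_0$. You instead discard the nonnegative cross terms and rerun the scaling inequality $A_i(\tau)\ge A_i(1)\tau^{2\theta}$ from Lemma~\ref{Lemma3.2}. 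Both arguments rest only on $(f_4)$ and $(f_6)$ and give the same $R^{2\theta}$ growth against $R^2$.
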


\begin{proof}
Choose $\eta^+,\eta^-\in C_c^\infty(\R^2)$ such that
\[
\eta^\pm\ge0,
\qquad
\eta^\pm\not\equiv0,
\qquad
\operatorname{supp}\eta^+\cap \operatorname{supp}\eta^-=\varnothing.
\]
For $R>0$, define
\[
\psi_R(t_1,t_2)=R(t_2\eta^+-t_1\eta^-),
\qquad (t_1,t_2)\in\Delta.
\]
Then $\psi_R$ is continuous. Moreover,
\[
\psi_R(\partial_1\Delta)\subset P^+\subset Y_1,
\qquad
\psi_R(\partial_2\Delta)\subset P^-\subset Y_2.
\]

For $(t_1,t_2)\in\partial_0\Delta$, one has
\[
(\psi_R(t_1,t_2))^+=Rt_2\eta^+,
\qquad
(\psi_R(t_1,t_2))^-= -Rt_1\eta^-.
\]
Hence
\[
\|(\psi_R(t_1,t_2))^+\|=Rt_2\|\eta^+\|,
\qquad
\|(\psi_R(t_1,t_2))^-\|=Rt_1\|\eta^-\|.
\]
Since $t_1+t_2=1$, at least one of $t_1,t_2$ is not smaller than $\frac12$. Therefore, for $R$ large enough,
\[
\psi_R(\partial_0\Delta)\cap Z=\varnothing.
\]

Next, by $(f_4)$ and $(f_6)$,
\[
F(t)>0 \quad\text{for all } t\ne0.
\]
Moreover, there exists $C_0>0$ such that
\[
F(\tau)\ge C_0|\tau|^\theta
\qquad\text{for all } |\tau|\ge1.
\]
Indeed, for $t>0$ the function $t\mapsto F(t)t^{-\theta}$ is nondecreasing, because
\[
\frac{d}{dt}\left(\frac{F(t)}{t^\theta}\right)
=
\frac{f(t)t-\theta F(t)}{t^{\theta+1}}\ge0.
\]
The same argument applied to $t\mapsto F(-t)t^{-\theta}$ on $(0,+\infty)$ gives the estimate for negative arguments.
Choose measurable sets $E_+\subset \operatorname{supp}\eta^+$ and $E_-\subset \operatorname{supp}\eta^-$ of positive measure and constants $\delta_+,\delta_->0$ such that
\[
\eta^+\ge\delta_+ \quad\text{on }E_+,
\qquad
\eta^-\ge\delta_- \quad\text{on }E_-.
\]
For $(t_1,t_2)\in\partial_0\Delta$, either $t_2\ge\frac12$ or $t_1\ge\frac12$. In the first case,
\[
t_2\eta^+-t_1\eta^-\ge \frac{\delta_+}{2}
\quad\text{on }E_+,
\]
while in the second case,
\[
t_1\eta^--t_2\eta^+\ge \frac{\delta_-}{2}
\quad\text{on }E_-.
\]
Since the supports of $\eta^+$ and $\eta^-$ are disjoint, both cases yield a measurable set $E_0$ of positive measure and a constant $\delta_0>0$, independent of $(t_1,t_2)\in\partial_0\Delta$, such that
\[
|t_2\eta^+-t_1\eta^-|\ge \delta_0
\quad\text{on }E_0.
\]
Hence, for $R$ large enough,
\[
F(\psi_R(t_1,t_2))\ge C R^\theta
\quad\text{on }E_0,
\]
with $C>0$ independent of $(t_1,t_2)\in\partial_0\Delta$. It follows that
\[
\int_{\R^2}\left(\frac{1}{|x|^\mu}*F(\psi_R(t_1,t_2))\right)F(\psi_R(t_1,t_2))\,dx
\ge c_0 R^{2\theta}
\]
for some $c_0>0$ independent of $(t_1,t_2)\in\partial_0\Delta$.

On the other hand,
\[
\|\psi_R(t_1,t_2)\|^2\le C_1R^2
\qquad\text{uniformly for }(t_1,t_2)\in\partial_0\Delta.
\]
Therefore
\[
I(\psi_R(t_1,t_2))
\le C_2R^2-C_3R^{2\theta}
\]
uniformly for $(t_1,t_2)\in\partial_0\Delta$. Since $\theta>1$,
\[
\sup_{(t_1,t_2)\in\partial_0\Delta} I(\psi_R(t_1,t_2))\to-\infty
\qquad\text{as }R\to+\infty.
\]
Thus, taking $R$ sufficiently large, we have
\[
\psi_R(\partial_0\Delta)\cap Z=\varnothing
\qquad\text{and}\qquad
\sup_{u\in\psi_R(\partial_0\Delta)} I(u)<c_*(\varepsilon).
\]
We then set
\[
\psi=\psi_R.
\]
\end{proof}

\subsection{A locally Lipschitz pseudo-gradient operator}

By Lemma~\ref{Lemma4.2}, the operator $A_\beta$ is continuous. In general, however, it is not locally Lipschitz. In order to construct a descending flow preserving the neighborhoods of the positive and negative cones, we now replace $A_\beta$ by a locally Lipschitz operator in the standard way.

\begin{Lem}\label{Lemma4.7}
Let $R>0$, and let $\beta_R>0$, $\varepsilon_0>0$, and $\kappa\in(0,1)$ be given by Corollary~\ref{Cor4.2}. Then, for every $\beta\in(0,\beta_R]$, there exists a locally Lipschitz continuous operator
\[
B_\beta:E\setminus K\to E
\]
such that
\begin{itemize}
\item[(i)] For every $\varepsilon\in(0,\varepsilon_0)$,
\[
B_\beta\bigl((\overline{P_\varepsilon^+}\cap B_R(0))\setminus K\bigr)\subset P_\varepsilon^+,
\qquad
B_\beta\bigl((\overline{P_\varepsilon^-}\cap B_R(0))\setminus K\bigr)\subset P_\varepsilon^-.
\]

\item[(ii)] For every $u\in E\setminus K$,
\[
\frac12\|u-B_\beta(u)\|
\le
\|u-A_\beta(u)\|
\le
2\|u-B_\beta(u)\|.
\]

\item[(iii)] For every $u\in E\setminus K$,
\[
\langle I'(u),u-B_\beta(u)\rangle
\ge
\frac12\|u-A_\beta(u)\|^2.
\]

\item[(iv)] If, in addition, $f$ is odd, then
\[
B_\beta(-u)=-B_\beta(u)
\qquad\text{for all }u\in E\setminus K.
\]
\end{itemize}
\end{Lem}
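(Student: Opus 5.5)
Following the standard construction of Bartsch--Liu--Weth and Liu--Liu--Wang, I would build $B_\beta$ by a locally finite partition of unity on $E\setminus K$ subordinate to small balls on which $A_\beta$ varies little. Fix $\beta\in(0,\beta_R]$. For each $u\in E\setminus K$ we have $u\neq A_\beta(u)$ by Lemma~\ref{Lemma4.3}, so $\delta(u):=\|u-A_\beta(u)\|>0$. By continuity of $A_\beta$ (Lemma~\ref{Lemma4.2}) and of $v\mapsto \|v-A_\beta(v)\|$, choose an open ball $N_u=B_{r_u}(u)\subset E\setminus K$ with $r_u\le \delta(u)/8$ such that, for all $v\in N_u$,
\[
\|A_\beta(v)-A_\beta(u)\|<\tfrac18\min\{\delta(u),\delta(v)\},\qquad \tfrac12\delta(u)\le\delta(v)\le 2\delta(u).
\]
Since $E\setminus K$ is metric, hence paracompact, the cover $\{N_u\}$ has a locally finite open refinement $\{N_i\}_{i\in\Lambda}$, each $N_i\subset N_{u_i}$. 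Set $\rho_i(v)=\operatorname{dist}(v,E\setminus N_i)$, which is Lipschitz, and $\pi_i=\rho_i/\sum_j\rho_j$, which is a locally Lipschitz partition of unity. In the odd case I would additionally symmetrize, replacing the cover by $\{N_i\}\cup\{-N_i\}$ and averaging, which is possible because $K$ and $A_\beta$ are odd when $f$ is odd. Finally, define
\[
B_\beta(v)=\sum_{i}\pi_i(v)\,A_\beta(u_i).
\]
This is locally a finite sum of locally Lipschitz functions times constant vectors, hence locally Lipschitz.

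For (ii) and (iii), write $v-B_\beta(v)=v-A_\beta(v)+\sum_i\pi_i(v)\bigl(A_\beta(v)-A_\beta(u_i)\bigr)$. The error term has norm at most $\tfrac18\delta(v)$, so
\[
\tfrac78\delta(v)\le\|v-B_\beta(v)\|\le\tfrac98\delta(v),
\]
which gives (ii). For (iii), Lemma~\ref{Lemma4.3} gives $\langle I'(v),v-A_\beta(v)\rangle\ge\delta(v)^2$. It remains to bound $\langle I'(v),A_\beta(v)-B_\beta(v)\rangle$ by $\|I'(v)\|_{E'}\cdot\tfrac18\delta(v)$. The obstacle here is that $\|I'(v)\|_{E'}$ is not controlled by $\delta(v)$ from above in general. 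To handle this, use the explicit representation: $\langle I'(v),\varphi\rangle=\mathcal A_\beta(v-A_\beta(v),\varphi)$, which follows from the computation in Lemma~\ref{Lemma4.3} with arbitrary $\varphi$. Hence $|\langle I'(v),\varphi\rangle|\le C_\beta\delta(v)\|\varphi\|$, where $C_\beta=1+\beta C$ comes from continuity of $\mathcal A_\beta$. Therefore, after shrinking the ball radii so that the error is at most $\delta(v)/(4C_\beta)$ instead of $\tfrac18\delta(v)$, we obtain $\langle I'(v),v-B_\beta(v)\rangle\ge\tfrac34\delta(v)^2\ge\tfrac12\delta(v)^2$.

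The delicate part is (i), the cone invariance. The partition-of-unity average must land in $P^+_\varepsilon$ for every $\varepsilon\in(0,\varepsilon_0)$ simultaneously. Convexity helps here: $P^+_\varepsilon$ is convex, because $u\mapsto u^-$ is pointwise concave and the norm is monotone along the lattice, which I would check via $\|(\lambda a+(1-\lambda)b)^-\|\le\|\lambda a^-+(1-\lambda)b^-\|$ using the cone structure of $E$ under negative parts. So it suffices that $A_\beta(u_i)\in P^+_\varepsilon$ whenever $\pi_i(v)\ne0$ and $v\in\overline{P^+_\varepsilon}\cap B_R(0)$. Corollary~\ref{Cor4.2} gives $A_\beta(v)\in P^+_{\kappa\varepsilon}$, but the centers $u_i$ need not lie in the cone. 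I would therefore impose a further smallness condition on the radii: $\|A_\beta(v)-A_\beta(u_i)\|<(1-\kappa)\min\{\|v^-\|,\delta(v)\}$-type bounds fail when $\|v^-\|=0$. So I would instead follow Bartsch--Liu--Weth and define $B_\beta$ by averaging $A_\beta$ evaluated at points, choosing the ball radii $r_u$ so that $\|A_\beta(w)-A_\beta(u)\|\le\frac{1-\kappa}{2}\,\operatorname{dist}$-controlled amounts. Alternatively, and more simply, replace constants by $A_\beta(u_i)$ only for $u_i$ chosen in $\overline{P_\varepsilon^+}$-compatible fashion, using $\|A_\beta(u_i)^-\|\le\|A_\beta(v)^-\|+\|A_\beta(v)-A_\beta(u_i)\|<\kappa\varepsilon+(1-\kappa)\varepsilon$. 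That last choice requires the oscillation of $A_\beta$ on $N_i$ to be below $(1-\kappa)\varepsilon$ for all $\varepsilon$ with $N_i$ meeting $\overline{P_\varepsilon^+}$, which I expect to be the main technical point.
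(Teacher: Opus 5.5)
You handle local Lipschitz continuity, (ii) and (iii) correctly. In particular, the identity $I'(v)[\varphi]=\mathcal A_\beta(v-A_\beta(v),\varphi)$ gives $\|I'(v)\|_{E'}\le(1+\beta/V_0)\|v-A_\beta(v)\|$, and that is the right way to control $\langle I'(v),A_\beta(v)-B_\beta(v)\rangle$.

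The genuine gap is (i). You leave it open, and both tools you propose for it fail for $P^\pm_\varepsilon$ as the paper defines them.

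\emph{Convexity fails.} $P^+_\varepsilon=\{u\in E:\|u^-\|<\varepsilon\}$ is not convex. The pointwise bound $|(\lambda a+(1-\lambda)b)^-|\le|\lambda a^-+(1-\lambda)b^-|$ is true, but the gradient and Gagliardo terms make the $E$-norm non-monotone for the pointwise order. Take $\chi\in C_c^\infty(\R^2)$ with $0\le\chi\le1$ and $\chi\equiv1$ on a ball, and set $a=-t\chi$, $b=t(1+\sin(Nx_1))\chi$. Then $\|b^-\|=0$ and $\|a^-\|=t\|\chi\|$. The midpoint $\tfrac12(a+b)=\tfrac t2\chi\sin(Nx_1)$ has negative part of norm at least of order $tN$. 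So for $N$ large the midpoint leaves $P^+_\varepsilon$ for every $\varepsilon$ slightly above $t\|\chi\|$.

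\emph{The triangle inequality for negative parts fails.} Comparing $b$ with $b-t\chi=t\chi\sin(Nx_1)$ shows that $w\mapsto w^-$ is continuous but not Lipschitz on $E$. Hence $\|A_\beta(u_i)^-\|\le\|A_\beta(v)^-\|+\|A_\beta(v)-A_\beta(u_i)\|$ is not available. Even if it were, an oscillation bound below $(1-\kappa)\varepsilon$ for all relevant $\varepsilon$ cannot hold once $N_i$ meets $P^+$, as you observe yourself.

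The missing idea in the standard construction is to use the contraction at the centres, not at $v$. Apply Proposition~\ref{Pro4.1} on a slightly larger ball and range of $\|u^-\|$. Choose $r_u$ so small that $\|v^-\|>\kappa\|u^-\|$ for all $v\in N_u$ whenever $u^-\ne0$; continuity of $u\mapsto\|u^-\|$ is enough for this. Then for $v\in\overline{P^+_\varepsilon}\cap B_R(0)$ with $\pi_i(v)\ne0$ you get $\|A_\beta(u_i)^-\|\le\kappa\|u_i^-\|<\|v^-\|\le\varepsilon$, or $A_\beta(u_i)^-=0$ if $u_i^-=0$. So every centre value lies in $P^+_\varepsilon$.

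Passing from the centre values to the convex combination $B_\beta(v)=\sum_i\pi_i(v)A_\beta(u_i)$ still requires convexity of $P^\pm_\varepsilon$. The same holds for the odd average $\tfrac12(B_\beta(v)-B_\beta(-v))$ needed for (iv).

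The paper does not construct $B_\beta$. It invokes the Lipschitz-approximation result of Liu--Sun and Liu--Liu--Wang, and that result relies on convexity of the cone neighbourhoods. In those works the neighbourhoods are $\{u:\operatorname{dist}(u,P^\pm)<\varepsilon\}$, and the contraction is obtained from $\operatorname{dist}(w,P^+)\le\|w^-\|$ and $\|w^-\|_{L^p}\le C_p\operatorname{dist}(w,P^+)$.

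To complete (i) you must either switch to these convex, distance-defined neighbourhoods and re-prove Proposition~\ref{Pro4.1} in that form, or find a genuinely different argument. With $P^\pm_\varepsilon$ defined through $\|u^\mp\|$, your averaging scheme does not yield (i).
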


\begin{proof}
By Lemma~\ref{Lemma4.2}, Corollary~\ref{Cor4.1}, and Corollary~\ref{Cor4.2}, the hypotheses of the standard Lipschitz approximation result in \cite{LiuLiuWang2015,LiuSun2001} are satisfied.
We therefore obtain a locally Lipschitz map \(B_\beta\) with properties (i)--(iv).
\end{proof}

For later use, define
\[
V_\beta(u)=u-B_\beta(u),
\qquad u\in E\setminus K.
\]
Then $V_\beta$ is locally Lipschitz on $E\setminus K$, and by Lemma~\ref{Lemma4.7},
\[
\langle I'(u),V_\beta(u)\rangle\ge \frac12\|u-A_\beta(u)\|^2>0
\qquad\text{for all }u\in E\setminus K.
\]
Hence $-V_\beta$ is a pseudo-gradient vector field for $I$ on $E\setminus K$.

\subsection{Proof of Theorem~\ref{Thm1.2}}

\begin{Lem}\label{Lemma4.8}
Let $\varepsilon\in(0,\varepsilon_*)$ be fixed and set
\[
Y_1=P_\varepsilon^+,
\qquad
Y_2=P_\varepsilon^-.
\]
Then $\{Y_1,Y_2\}$ is an admissible family of invariant sets with respect to $I$ at every level $c\ge c_*(\varepsilon)$. If, in addition, $f$ is odd, then $\{Y_1,Y_2\}$ is a $G$-admissible family of invariant sets with respect to $I$ at every level $c\ge c_*(\varepsilon)$, where
\[
G(u)=-u.
\]
\end{Lem}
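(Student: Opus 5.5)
The plan is to build the deformation $\sigma$ from the flow of the locally Lipschitz pseudo-gradient field $V_\beta=\mathrm{id}-B_\beta$ of Lemma~\ref{Lemma4.7}, the standard Liu--Sun construction. Fix $c\ge c_*(\varepsilon)$.

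\emph{Step 1: a priori bound.} Since $I$ satisfies $(PS)$ (Lemma~\ref{Lemma4.4}) and the estimate in Lemma~\ref{Lemma4.4} bounds $\|u\|$ in terms of $I(u)$ and $\|I'(u)\|$, the set $\{u: |I(u)-c|\le 1,\ \|I'(u)\|\le 1\}$ is bounded. Choose $R$ larger than this bound plus a margin, which also controls flow lines emanating from $I^{c+1}$. With this $R$, take $\beta\in(0,\beta_R]$ and $\varepsilon_0$ from Corollary~\ref{Cor4.2}, shrinking $\varepsilon_*$ if necessary so that $\varepsilon<\varepsilon_0$.

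\emph{Step 2: invariance.} Consider the flow
\[
\tfrac{d}{dt}\tau(t,u)=-\chi(\tau)V_\beta(\tau),\qquad \tau(0,u)=u,
\]
where $\chi$ is a locally Lipschitz cutoff equal to $1$ on $I^{-1}([c-2\bar\varepsilon,c+2\bar\varepsilon])$ minus a neighbourhood of $K$ and vanishing near $K\cup I^{c-2\bar\varepsilon}$. Since $\tau-tV_\beta(\tau)=(1-t)\tau+tB_\beta(\tau)$ is a convex combination, property (i) of Lemma~\ref{Lemma4.7} and the convexity and closedness of $\overline{P^\pm_\varepsilon}$ (via $\|\cdot^-\|$ being convex) yield $\tau(t,\overline{P^\pm_\varepsilon})\subset\overline{P^\pm_\varepsilon}$ by the standard tangency (Brezis--Martin) criterion. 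Openness is then preserved: since $B_\beta$ maps into the open set $P_\varepsilon^\pm$ with margin, points of $P^\pm_\varepsilon$ cannot reach the boundary. This holds as long as the trajectory stays in $B_R(0)$; I expect this, the bookkeeping between $R$, $\beta_R$ and the region swept by the flow, to be the main obstacle. I would first try to show the trajectories stay in $B_R$ from Step 1 and the pseudo-gradient inequality (iii), since $I$ decreases along the flow and $I\ge c-2\bar\varepsilon$ on the active region. Alternatively, one can apply Corollary~\ref{Cor4.2} only on the bounded set where $\chi\neq0$.

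\emph{Step 3: deformation.} If $K_c\setminus W=\varnothing$, then by $(PS)$ and continuity there is $\delta>0$ with $\|I'(u)\|\ge\delta$ on $I^{-1}[c-2\bar\varepsilon,c+2\bar\varepsilon]\setminus W_{\varepsilon'}$, where $W_{\varepsilon'}$ is a slightly smaller neighbourhood. By Lemma~\ref{Lemma4.3}, Corollary~\ref{Cor4.1} and Lemma~\ref{Lemma4.7}(ii)--(iii), $\frac{d}{dt}I(\tau)\le-\frac12\|\tau-A_\beta\tau\|^2$ while $\|V_\beta\|\le 2\|\tau-A_\beta\tau\|$. The usual length-versus-energy estimate then gives, for small $\varepsilon$, that $\sigma=\tau(T,\cdot)$ sends $I^{c+\varepsilon}\setminus W$ into $I^{c-\varepsilon}$, unless the orbit enters $W$. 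An orbit that enters $W$ stays there by invariance, so $(c)$ holds. Property $(b)$ holds because $\chi=0$ on $I^{c-2\bar\varepsilon}$.

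\emph{Step 4: the symmetric case.} When $f$ is odd, $I$ is even and $B_\beta$ is odd by Lemma~\ref{Lemma4.7}(iv), so the flow commutes with $G$. Since $K_c$ is compact by $(PS)$, $K_c\setminus W$ is compact and symmetric and avoids $0$ (because $0\in Z$). Hence it has a symmetric neighbourhood $N_c$ of finite genus. Running the same argument outside $N_c$ gives (a)--(d) of Definition~\ref{Def4.3}.
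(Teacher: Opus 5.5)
Your route is the paper's route. The paper's proof of Lemma~\ref{Lemma4.8} takes $V_\beta=\mathrm{id}-B_\beta$ from Lemma~\ref{Lemma4.7} and fixes $R$ with $K_c\subset B_{R/2}(0)$. It asserts positive invariance of $P_\varepsilon^\pm$ ``in $B_R(0)$'' via Corollary~\ref{Cor4.2} and Lemma~\ref{Lemma4.7}(i), and then cites the (equivariant) deformation lemma of Liu--Sun and Liu--Liu--Wang. Your Step~4 is exactly its odd case. The problem is that the justification you add for the key step, invariance, is wrong. The sets $\overline{P_\varepsilon^\pm}$ as defined here are not convex, because $u\mapsto\|u^-\|$ is not convex on $E$. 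Take $0\le\phi\in C_c^\infty(\R^2)$, $\phi\not\equiv0$, and set $u=-\phi$ and $v_k=\phi\bigl(\tfrac12+\tfrac25\sin(kx_1)\bigr)\ge0$. Then $\|u^-\|=\|\phi\|$ and $v_k^-=0$. However, $w_k=\tfrac12(u+v_k)\le0$, so $\|w_k^-\|=\|w_k\|\ge\|\nabla w_k\|_2\ge\tfrac k5\|\phi\cos(kx_1)\|_2-C\to\infty$. Scaling by $\lambda$ with $\lambda\|\phi\|\le\varepsilon$ gives two points of $\overline{P_\varepsilon^+}$ whose midpoint lies far outside it. So knowing $B_\beta(u)\in P_\varepsilon^+$ does not control the segment $(1-t)u+tB_\beta(u)$, and the Brezis--Martin tangency condition is not verified. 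The convex-combination argument needs genuinely convex sets, such as the distance neighbourhoods $\{u:\operatorname{dist}(u,P^\pm)<\varepsilon\}$ customary in that framework. For those, Proposition~\ref{Pro4.1} can be adapted using $|u^-|\le|u-v|$ for $v\in P^+$. But that changes the sets in the statement.

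The localization problem you flagged is also a genuine gap, and neither of your remedies closes it. Definition~\ref{Def4.1} requires $\sigma(Y_i)\subset Y_i$ on all of $E$, and $\sigma(I^{c+\delta}\setminus W)\subset I^{c-\delta}$ on an unbounded set. Corollary~\ref{Cor4.2} and Lemma~\ref{Lemma4.7}(i), by contrast, hold only on $B_R(0)$. Your Step~1 bounds $\{|I-c|\le1,\ \|I'\|\le1\}$, not the strip $\{|I-c|\le2\bar\varepsilon\}$ where $\chi\ne0$. That strip is unbounded, and on it $\int(|x|^{-\mu}*F(u))F(u)=\|u\|^2-2I(u)$. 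Hence $\|F(u)\|_{L^{p_\mu}}$, the quantity hidden in the constant $C_R$ of Proposition~\ref{Pro4.1}, is unbounded there. This is the Choquard coupling of $u^+$ and $u^-$, and it is why the invariance estimate is only local. Moreover, ``shrinking $\varepsilon_*$ so that $\varepsilon<\varepsilon_0$'' reverses the quantifiers. In the statement $\varepsilon$ is fixed before $c$, but $\varepsilon_0$ depends on $R$ (through $C_R\varepsilon_0^{q-2}\le\kappa/2$), and $R$ grows with $c$. This matters for the multiplicity part, where $c_j\to\infty$. A smaller point: in Step~3, an orbit entering $W$ does not yield (c), which asks for $\sigma(I^{c+\delta}\setminus W)\subset I^{c-\delta}$. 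You need the energy--distance argument based on $\operatorname{dist}(K_c,E\setminus W)>0$. The paper's proof does not resolve these points either: it asserts invariance only in $B_R(0)$ and then cites the deformation lemma. So there is no fix there to borrow, and a complete argument needs an additional idea.
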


\begin{proof}
Fix $c\ge c_*(\varepsilon)$.

Assume first that
\[
K_c\setminus W=\varnothing.
\]
By Lemma~\ref{Lemma4.4}, the functional $I$ satisfies the $(PS)_c$ condition, so $K_c$ is compact. Choose $R>0$ such that
\[
K_c\subset B_{R/2}(0).
\]
Let $\beta\in(0,\beta_R]$, where $\beta_R$ is given by Corollary~\ref{Cor4.2}, and let $B_\beta$ be the locally Lipschitz map given by Lemma~\ref{Lemma4.7}. Define
\[
V_\beta(u)=u-B_\beta(u), \qquad u\in E\setminus K.
\]
Then $V_\beta$ is locally Lipschitz on $E\setminus K$, and by Lemma~\ref{Lemma4.7}(iii),
\[
\langle I'(u),V_\beta(u)\rangle\ge \frac12\|u-A_\beta(u)\|^2>0
\qquad\text{for all }u\in E\setminus K.
\]
Thus $-V_\beta$ is a pseudo-gradient vector field for $I$ on $E\setminus K$. Moreover, by Corollary~\ref{Cor4.2} and Lemma~\ref{Lemma4.7}(i), the sets
\[
Y_1=P_\varepsilon^+,
\qquad
Y_2=P_\varepsilon^-
\]
are positively invariant under the descending flow generated by $-V_\beta$ in $B_R(0)$.

Hence the hypotheses of the standard deformation lemma for invariant sets are satisfied, see \cite{LiuSun2001,LiuLiuWang2015}. Therefore there exists $\delta_0>0$ such that, for every $\delta\in(0,\delta_0)$, one can find a continuous map
\[
\sigma:E\to E
\]
satisfying
\[
\sigma(Y_1)\subset Y_1,
\qquad
\sigma(Y_2)\subset Y_2,
\qquad
\sigma|_{I^{\,c-\delta}}=\mathrm{Id},
\]
and
\[
\sigma\bigl(I^{\,c+\delta}\setminus W\bigr)\subset I^{\,c-\delta}.
\]
Thus $\{Y_1,Y_2\}$ is admissible at level $c$ by Definition~\ref{Def4.1}. Since $c\ge c_*(\varepsilon)$ is arbitrary, the conclusion holds for every level $c\ge c_*(\varepsilon)$.

Assume now that $f$ is odd. Then $I$ is even and
\[
G(u)=-u
\]
is an isometric involution on $E$. Again by Lemma~\ref{Lemma4.4}, the set $K_c$ is compact. Since
\[
W=Y_1\cup Y_2
\]
is symmetric, so is $K_c\setminus W$. Moreover,
\[
I(0)=0<c_*(\varepsilon)\le c,
\]
hence
\[
0\notin K_c\setminus W.
\]
If $K_c\setminus W=\varnothing$, set
\[
N_c=\varnothing.
\]
Otherwise, since $K_c\setminus W$ is compact, symmetric, and does not contain the origin, the standard finite-genus neighborhood property of the Krasnosel'skii genus yields a symmetric open neighborhood $N_c$ of $K_c\setminus W$ such that
\[
\gamma(N_c)<+\infty.
\]

By Lemma~\ref{Lemma4.7}(iv), the map $B_\beta$ can be chosen odd. Hence
\[
V_\beta(-u)=-V_\beta(u)
\qquad\text{for all }u\in E\setminus K,
\]
so the descending flow generated by $-V_\beta$ is $G$-equivariant. Therefore the equivariant deformation lemma in \cite{LiuSun2001,LiuLiuWang2015} applies. Consequently, there exists $\delta_1>0$ such that, for every $\delta\in(0,\delta_1)$, one can find a continuous map
\[
\sigma:E\to E
\]
satisfying
\[
\sigma(Y_1)\subset Y_1,
\qquad
\sigma(Y_2)\subset Y_2,
\qquad
\sigma\circ G=G\circ \sigma,
\]
\[
\sigma|_{I^{\,c-2\delta}}=\mathrm{Id},
\]
and
\[
\sigma\bigl(I^{\,c+\delta}\setminus (N_c\cup W)\bigr)\subset I^{\,c-\delta}.
\]
Thus $\{Y_1,Y_2\}$ is $G$-admissible at level $c$ by Definition~\ref{Def4.3}. Since $c\ge c_*(\varepsilon)$ is arbitrary, the conclusion follows.
\end{proof}

\begin{proof}[Proof of the existence part of Theorem~\ref{Thm1.2}]
Fix $\varepsilon\in(0,\varepsilon_*)$, where $\varepsilon_*$ is given by Lemma~\ref{Lemma4.5}, and set
\[
Y_1=P_\varepsilon^+,
\qquad
Y_2=P_\varepsilon^-,
\qquad
c_*=c_*(\varepsilon).
\]
By Lemma~\ref{Lemma4.6}, there exists a continuous map
\[
\psi:\Delta\to E
\]
such that
\[
\psi(\partial_1\Delta)\subset Y_1,
\qquad
\psi(\partial_2\Delta)\subset Y_2,
\qquad
\psi(\partial_0\Delta)\cap Z=\varnothing,
\]
and
\[
\sup_{u\in\partial_0\Delta} I(\psi(u))<c_*.
\]
Moreover, by Lemmas~\ref{Lemma4.4} and \ref{Lemma4.8}, all assumptions of Theorem~\ref{Thm4.1} are satisfied. Hence there exists
\[
u\in K_{c_0}\setminus W
\]
for some
\[
c_0\ge c_*.
\]
In particular,
\[
I'(u)=0
\qquad\text{and}\qquad
u\notin W=Y_1\cup Y_2.
\]
Since
\[
u\notin P_\varepsilon^+,
\qquad
u\notin P_\varepsilon^-,
\]
the definitions of \(P_\varepsilon^+\) and \(P_\varepsilon^-\) imply
\[
\|u^-\|\ge\varepsilon,
\qquad
\|u^+\|\ge\varepsilon.
\]
Hence
\[
u^+\neq0,
\qquad
u^-\neq0.
\]
Therefore $u$ is a sign-changing weak solution of \eqref{eq1.1}.
\end{proof}

\begin{proof}[Proof of the multiplicity part of Theorem~\ref{Thm1.2}]
Assume in addition that $(f_7)$ holds. Then $f$ is odd, hence $I$ is even. Let
\[
G(u)=-u.
\]
Then $G$ is an isometric involution on $E$, and
\[
F_G=\{u\in E:\ Gu=u\}=\{0\}.
\]

Fix $\varepsilon\in(0,\varepsilon_*)$ and set
\[
Y_1=P_\varepsilon^+,
\qquad
Y_2=P_\varepsilon^-,
\qquad
c_*=c_*(\varepsilon).
\]
By Lemmas~\ref{Lemma4.4} and \ref{Lemma4.8}, the pair $\{Y_1,Y_2\}$ is a $G$-admissible family of invariant sets with respect to $I$ at every level $c\ge c_*$.

Let $\{e_j\}_{j\ge1}\subset C_c^\infty(\R^2)$ be linearly independent sign-changing functions, and define
\[
E_n=\operatorname{span}\{e_1,\dots,e_{2n}\}.
\]
Choose an isometric linear isomorphism
\[
T_n:\R^{2n}\to E_n.
\]

We claim that there exists $R_n>0$ such that
\[
\sup_{u\in E_n,\ \|u\|=R_n} I(u)<c_*.
\]
Indeed, since $E_n$ is finite dimensional and generated by compactly supported smooth functions, there exists a bounded open set $\Omega_n\subset\R^2$ containing the support of every $u\in E_n$. Moreover, by compactness of
\[
S_n=\{u\in E_n:\ \|u\|=1\},
\]
there exist constants $\delta_n,\rho_n>0$ such that, for every $u\in S_n$, the set
\[
A_u=\{x\in\Omega_n:\ |u(x)|\ge \delta_n\}
\]
satisfies
\[
|A_u|\ge \rho_n.
\]
On the other hand, by $(f_4)$ and $(f_6)$, exactly as in the proof of Lemma~\ref{Lemma4.6}, there exists $C_0>0$ such that
\[
F(\tau)\ge C_0|\tau|^\theta
\qquad\text{for all }|\tau|\ge1.
\]
Hence, for $R\ge \delta_n^{-1}$ and $u\in S_n$,
\[
\int_{\Omega_n}F(Ru)\,dx\ge C R^\theta,
\]
with $C>0$ independent of $u\in S_n$. Since $\operatorname{supp}u\subset\Omega_n$ and $\Omega_n$ is bounded, the positivity of the kernel gives
\[
\int_{\R^2}\left(\frac{1}{|x|^\mu}*F(Ru)\right)F(Ru)\,dx\ge C_n R^{2\theta}
\]
for some $C_n>0$ independent of $u\in S_n$. Therefore
\[
I(Ru)\le \frac12 R^2-C_nR^{2\theta}
\qquad\text{for all }u\in S_n.
\]
Since $\theta>1$,
\[
\sup_{u\in S_n}I(Ru)\to-\infty
\qquad\text{as }R\to+\infty.
\]
This proves the claim.

Choose $R_n>2\varepsilon$ such that
\[
\sup_{u\in E_n,\ \|u\|=R_n} I(u)<c_*.
\]
Define
\[
\varphi_n(t)=R_nT_n(t),
\qquad
t\in B_{2n},
\]
where
\[
B_{2n}=\{t\in\R^{2n}:\ |t|\le1\}.
\]
Then
\[
\varphi_n(0)=0\in Z,
\qquad
\varphi_n(-t)=G\varphi_n(t)
\qquad\text{for all }t\in B_{2n}.
\]
Moreover, if $t\in\partial B_{2n}$, then
\[
\|\varphi_n(t)\|=R_n>2\varepsilon.
\]
Since every $u\in Z=P_\varepsilon^+\cap P_\varepsilon^-$ satisfies
\[
\|u\|\le \|u^+\|+\|u^-\|<2\varepsilon,
\]
we have
\[
\varphi_n(\partial B_{2n})\cap Z=\varnothing.
\]
Also,
\[
\sup_{u\in \varphi_n(\partial B_{2n})}I(u)<c_*,
\qquad
I(0)=0<c_*,
\]
and hence
\[
\sup_{u\in F_G\cup \varphi_n(\partial B_{2n})}I(u)<c_*.
\]

Therefore all the assumptions of Theorem~\ref{Thm4.2} are satisfied. Applying it, we obtain a sequence of critical values $\{c_j\}_{j\ge3}$ such that
\[
c_j\ge c_*,
\qquad
K_{c_j}\setminus W\neq\varnothing,
\qquad
c_j\to+\infty
\quad\text{as }j\to+\infty.
\]
For each $j$, choose
\[
u_j\in K_{c_j}\setminus W.
\]
Then
\[
I'(u_j)=0,
\qquad
u_j\notin W,
\]
so
\[
u_j^+\neq0,
\qquad
u_j^-\neq0.
\]
Therefore each $u_j$ is a sign-changing weak solution of \eqref{eq1.1}.
\end{proof}

\section*{Acknowledgments}
We would like to thank the anonymous referee for his/her careful readings of our manuscript and the useful comments. 

\medskip
{\bf Funding:} This work is supported by National Natural Science Foundation of China (12301145, 12261107, 12561020) and Yunnan Fundamental Research Projects (202301AU070144, 202401AU070123).

\medskip
{\bf Author Contributions:} All the authors wrote the main manuscript text together and these authors contributed equally to this work.

\medskip
{\bf Data availability:}  Data sharing is not applicable to this article as no new data were created or analyzed in this study.

\medskip
{\bf Conflict of Interests:} The authors declares that there is no conflict of interest.


\end{document}